\PassOptionsToPackage{numbers,sort&compress}{natbib}
\documentclass[preprint,12pt]{elsarticle}

\usepackage[english]{babel}
\usepackage[T1]{fontenc}
\usepackage[utf8]{inputenc}

\usepackage[margin=1.1in]{geometry}
\usepackage[pages=all,color=black,position={current page.south},
            placement=bottom,scale=1,opacity=1,vshift=5mm]{background}
\SetBgContents{}

\usepackage{amsmath,amssymb,amsthm,mathtools,bm,stmaryrd,bbm,mathrsfs}
\DeclareMathOperator{\Span}{span}

\usepackage{algorithm}
\usepackage{algpseudocode}

\usepackage{tabularx,array,multirow,booktabs,diagbox}
\usepackage{float}
\usepackage[section]{placeins}
\usepackage{multicol}
\usepackage{caption}
\usepackage{subcaption}

\usepackage{graphicx}
\graphicspath{{fig/}}
\usepackage[export]{adjustbox}

\usepackage{pgfplots}
\usepackage{pgfplotstable}
\usepackage{pgf-umlcd}
\usepackage{forest}
\usetikzlibrary{er,positioning,shapes,external}
\usepgfplotslibrary{colorbrewer,groupplots,external}

\pgfplotsset{compat=1.18}
\pgfplotsset{
    cycle list/Dark2-7,
    cycle multiindex* list={
        mark list*\nextlist
        Dark2-7\nextlist
    },
    every axis plot/.append style={thick}
}

\newif\ifarxiv
\arxivfalse  %

\ifarxiv
    \tikzset{external/export=false}
\else
    \tikzset{external/only named=true}
\fi

\usepackage{xcolor}
\usepackage[normalem]{ulem}
\usepackage{comment}
\usepackage{todonotes}

\usepackage{pifont}

\usepackage{siunitx}
\theoremstyle{definition}
\newtheorem{definition}{Definition}[section]
\newtheorem{theorem}{Theorem}
\newtheorem{remark}{Remark}
\newtheorem{assumptions}{Assumptions}
\newtheorem{lemma}{Lemma}

\newcommand{\norm}[1]{\left\lVert#1\right\rVert}

\tikzset{
  bplus/.style={
    rectangle split,
    rectangle split horizontal,
    text width=1em,
    text centered,
    inner xsep=2pt,
    draw
  }
}

\forestset{
  multi edge/.style={
    edge path={
      \noexpand\path[\forestoption{edge}]
      (!u.#1 south)--(.child anchor)\forestoption{edge label};
    }
  }
}

\ExplSyntaxOn
\seq_new:N \l_tree_node_seq
\seq_set_from_clist:Nn \c_node_names_seq {one,two,three,four}
\NewDocumentCommand{\mpn}{m}{
  \seq_set_from_clist:Nn \l_tree_node_seq {#1}
  \seq_map_indexed_inline:Nn \l_tree_node_seq {
    \nodepart{\seq_item:Nn \c_node_names_seq {##1}} {##2}
  }
}
\ExplSyntaxOff

\usepackage[colorlinks=true]{hyperref}
\hypersetup{
    linkcolor=blue,
    filecolor=magenta,
    urlcolor=cyan
}

\begin{document}
\captionsetup[table]{skip=6pt}

\begin{frontmatter}
    \title{R3MG-C: a high-order algebraic-geometric multilevel preconditioner for continuous finite element discretizations}

    \author[inst1]{Davide Polverino\corref{cor1}}\ead{davide.polverino@phd.unipi.it}
    \author[inst1,inst2]{Marco Feder}\ead{marco.feder@dm.unipi.it}
    \author[inst1]{Luca Heltai}\ead{luca.heltai@unipi.it}

    \cortext[cor1]{Corresponding author.}

    \affiliation[inst1]{organization={University of Pisa},
        addressline={Largo B. Pontecorvo, 5},
        city={Pisa},
        postcode={56126},
        country={Italy}}

        \affiliation[inst2]{organization={University of Roma Tor Vergata},
        addressline={Via Politecnico, 1},
        city={Roma},
        postcode={00133},
        country={Italy}}   

    \begin{abstract}
Algebraic multigrid (AMG) methods are robust and efficient black-box preconditioners for linear systems arising from low-order discretizations of elliptic partial differential equations, but their performance often deteriorates for high-order methods. We introduce an algebraic-geometric multilevel preconditioner for continuous lagrangian finite element discretizations. The method automatically constructs prolongation operators and a Galerkin hierarchy using only the coordinates of finite element support points, without requiring a prescribed mesh hierarchy or domain decomposition. The support-point are recursively partitioned by an R-tree algorithm based on axis-aligned bounding boxes, producing a hierarchy of agglomerates. In contrast to classical AMG, which typically uses piecewise-constant aggregate coarse spaces, our method embeds local discontinuous polynomial spaces of degree $p'>0$, defined on the agglomerate boxes, through continuous nodal interpolation. This yields a high-order extension of the Nicolaides coarse space. A two-level analysis quantifies how the coarse polynomial degree offsets the effects of large agglomerates and high-order fine discretizations, under uniform box-regularity, interpolation-stability, and stable-decomposition assumptions. Numerical experiments in two and three dimensions show that the resulting V-cycle, used as a conjugate-gradient preconditioner, maintains stable iteration counts and remains effective in regimes where standard AMG deteriorates.
    \end{abstract}

    \begin{keyword}
        agglomeration \sep algebraic multigrid \sep iterative solvers \sep conforming elements \sep high-order finite elements
    \end{keyword}
\end{frontmatter}

\section{Introduction}\label{sec:intro}

High-order finite element discretizations of partial differential equations (PDEs) are attractive because they
deliver high accuracy per degree of freedom and are particularly
effective on smooth solutions and geometrically complex domains.
However, the efficient solution of the resulting linear systems remains
a central difficulty. Standard algebraic multigrid (AMG) methods are
designed to construct a hierarchy directly from the matrix graph and
have been extremely successful as black-box solvers for sparse systems
arising from low-order discretizations of elliptic PDEs~\cite{Brandt1986AMGTheory,RugeStuben1987AMG,Stuben2001Review,xu:review}.
For high-order finite elements, the situation is more delicate. The
matrix graph contains strong intra-element couplings and high-frequency
polynomial modes that are not represented adequately by coarse spaces
designed for low-order discretizations. As a consequence, standard AMG
can lose robustness as the polynomial degree is increased~\cite{amg:high_order,HariStadlerBirosHighOrderMG}.

The main contribution of this work is the introduction of a high-order multilevel preconditioner that employs a high-order extension of the
Nicolaides coarse-space construction~\cite{victorita::DD} to build the hierarchy. In the original Nicolaides approach~\cite{Nicolaides1987Deflation}, one augments the conjugate gradient iteration~\cite{Stiefel::CG} by coarse functions that are constant on subdomains or aggregates. This captures the lowest-order near-null space components and provides an effective mechanism for deflation and coarse correction. We generalize this idea by replacing the local
constant space by a local polynomial space of degree $p'$. The case
$p'=0$ recovers the Nicolaides-type aggregate space, whereas
$p'>0$ enriches the coarse correction with polynomial modes that are
essential for high-order finite element discretizations. This gives a
coarse space that is still algebraic-geometric and inexpensive to build,
but that retains the polynomial information needed for robustness with
respect to the fine-grid degree $p$. Enriching the coarse space is not a new idea, and many works in the domain decomposition literature employ this technique. For example, in~\cite{local_coarse_victorita1, local_coarse_victorita2, Efendiev_local_coarse, Efendiev_local_coarse2, scheichl_genEO, victorita_scheichl_local}, local generalized eigenvalue problems are solved to augment the coarse space, while in~\cite{energy_min_coarse}, energy minimization problems are solved. By contrast, our extended coarse space does not require solving any additional problem.

The construction of the coarse space is based on agglomerates generated by an R-tree hierarchy; thus, no prescribed coarse mesh is needed. R-trees and R\textsuperscript{*}-trees provide a practical way of grouping
geometric objects through bounding boxes~\cite{guttman::rtree,beckmann::rtree,manolopoulos::rtree}.
In the present setting, the objects are either cells or finite element
support points. Each agglomerate defines a local axis-aligned bounding
box, and on that box we define a tensor-product polynomial space of
degree $p'$. The coarse functions are then obtained by restricting these
box polynomials to the agglomerate and injecting them into the fine
finite element space through interpolation at the owned support points.
This leads to sparse prolongation operators and to a Galerkin hierarchy
\[
  A_{\ell+1}=P_\ell^T A_\ell P_\ell .
\]
The method therefore combines geometric information from the finite
element discretization with an algebraic multilevel construction. The use of R\textsuperscript{*}-trees to build a multilevel hierarchy was introduced in~\cite{Feder::R3MG} in the context of discontinuous Galerkin discretizations. In the conforming case considered here, however, the agglomeration procedure requires additional care.

Our algorithm is related to aggregation-based AMG and AMGe methods, where coarse spaces are built from aggregates, element interpolation, or local spectral information~\cite{VanekMandelBrezina1996SmoothedAggregation,VanekBrezinaMandel2001ConvergenceSA,
BrezinaEtAl2001AMGe,JonesVassilevski2001AMGeAgglomeration,ChartierEtAl2003SpectralAMGe,
Notay2006AggregationPreconditioning}.
The distinction is that the present coarse space is explicitly designed
as a polynomial enrichment of the Nicolaides aggregate space: rather
than being merely an algebraic interpolation of low-order unknowns (as
in standard AMG), the coarse basis is constructed as a continuous interpolation of a discontinuous high-order tensor-product approximation space
defined on Cartesian bounding boxes and tied to the support points of
the original finite element discretization. The
construction is also related in spirit to multigrid methods for spectral
and high-order elements
\cite{RonquistPatera1987SpectralMG,HariStadlerBirosHighOrderMG},
but it does not require a nested sequence of geometric meshes.

We analyze the method in a two-level subspace-correction framework~\cite{xu:review, XU_ssc, zikatanov_convergence_bound}. Let
$h$ denote the fine mesh size, $H$ the maximal diameter of an agglomerate,
$p$ the degree of the fine finite element space, and $p'\le p$ the
degree of the local coarse polynomial space. Under uniform assumptions on the aspect ratios and nondegeneracy of the bounding boxes, the stability of the local interpolation, the admissibility of the agglomeration, and a stable nodal decomposition, the two-level error propagation operator satisfies the estimate
\[
  \|E\|_A^2
  \le
  1-
  \frac{C}{
  p^{2d+8}\frac{H^3}{h^3}(1 + {p'}^{-2} + L^2p^d)(1 + L^2\frac{H}{h})c^D}
  ,
\]
where $d=2,3$ is the spatial dimension, $c^D$ is a parameter that depends on the smoother, $C$ is a positive constant independent of $H,h,p$, and $p'$, and $L$ is the maximum number of degrees of freedom held by an agglomerate. This bound identifies the mechanism behind the method. For $p'>0$,
the approximation term improves by the factor ${p'}^{-2}$, showing that
higher-order coarse polynomials compensate for larger agglomerates and
for the high-order content of the fine space, resulting in an enriched coarse space that makes algebraic multigrid
robust for high-order finite element discretizations.

Numerical results confirm this behavior and reveal that the preconditioner performs better than the current theoretical bound suggests. We consider Poisson problems discretized with conforming finite elements in two and three dimensions, on structured and unstructured meshes such as left ventricle and liver geometries. 
The proposed R-tree-based V-cycle is used as a preconditioner for conjugate gradient and is compared with a standard AMG implementation from Trilinos~\cite{Trilinos}. The experiments show that controlling the ratio $H/h$
and increasing the coarse polynomial degree $p'$ leads to stable
iteration counts for discretizations of polynomial degree $p$. In the high-order
regimes tested here, the enriched Nicolaides hierarchy remains effective
whereas a standard low-order AMG coarse space deteriorates.
The remainder of the paper is organized as follows.
Section~\ref{sec:model_problem} introduces the model problem and the
finite element notation. Section~\ref{sec:rtree} describes the R-tree
agglomeration procedure. Section~\ref{sec:preconditioner} presents the
multilevel preconditioner and the construction of the transfer
operators. Section~\ref{sec:analysis} contains the two-level convergence
analysis and the high-order Nicolaides approximation estimate.
Section~\ref{sec:numerics} reports the numerical experiments.

\section{Notation and model problem} \label{sec:model_problem}

Let $\Omega \subset \mathbb{R}^d$, with $d=2,3$, be an open, bounded, simply connected domain with Lipschitz boundary $\partial \Omega$. We denote by $H^s(\Omega)$ the Sobolev space of index $s \geq 0$ of real-valued functions on $\Omega$, endowed with the seminorm $|\cdot|_{H^s(\Omega)}$ and norm $\norm{\cdot}_{H^s(\Omega)}$. Let $L^p(\Omega)$, $p \in [1,+\infty]$, be the standard Lebesgue space of real-valued functions on $\Omega$, equipped with the norm $\norm{\cdot}_{L^p(\Omega)}$. We consider the linear elliptic problem: find $u \in H^1(\Omega)$ such that
\begin{equation}\label{eq:poisson}
\left\{
\begin{aligned}
- \Delta &u = f,  \quad \text{in} \ \Omega, \\
&u = g, \quad \text{on} \ \partial \Omega ,
\end{aligned}
\right.
\end{equation}
with $f \in L^2(\Omega)$, $g \in H^{1/2}(\partial \Omega) $. We will assume that $g$ admits a continuous representative in $H^{1/2}(\partial \Omega)$. Setting $H_D^1 := \{u \in H^1({\Omega}) : u = 0 \ \text{on} \ \partial \Omega \}$, the weak formulation of (\ref{eq:poisson}) reads: find $u \in H^1(\Omega)$, with $u = g$ on $\partial \Omega$, such that
\begin{equation}\label{eq:weakform}
    \int_{\Omega} \nabla u \cdot \nabla v  \ d\textbf{x} = \int_{\Omega} fv \ d\textbf{x}
\end{equation}
for all $v \in H_{D}^1(\Omega) $. The well-posedness of the weak problem (\ref{eq:weakform}) is guaranteed by the Lax-Milgram lemma.

\subsection{Meshes and finite element spaces}
We consider shape-regular, geometrically conforming, quasi-uniform meshes $\mathcal{T}_h = \bigcup_{i=1}^N \tau_i$, where $h_{i} := \text{diam}(\tau_i)$ is the diameter of $\tau_i$ and $h := \max_{i = 1, \dots, N} h_i$. The open elements $\tau_i \subset \Omega$ are mutually disjoint and satisfy $\bigcup_{i=1}^N \bar{\tau}_i = \bar{\Omega}$. In view of our agglomeration procedure, we denote by $K$ a polytopic element composed of fine mesh elements $\tau_i$. We introduce the conforming \emph{finite element space}
\begin{equation*}
    V_h = \Big\{ u  \in \mathcal{C}^0(\bar{\Omega}) \cap H^1(\Omega) :  \ u|_{\tau_i} \in \mathcal{R}^p(\tau_i), \tau_i \subset \Omega \ \text{and} \ u = 0 \ \text{on} \ \partial \Omega \Big\},
\end{equation*}
where $p \in \mathbb{N}$ and, if $\tau_i$ is a quadrilateral or hexahedral element, $\mathcal{R}^p(\tau_i) = \mathcal{Q}^{p}(\tau_i)$ is the polynomial space spanned by tensor-product Lagrange polynomials of degree $p$ in each variable in $\tau_i$. If $\tau_i$ is a simplicial element, $\mathcal{R}^p(\tau_i) = \mathcal{P}^{p}(\tau_i)$ is the polynomial space spanned by simplicial Lagrange polynomials of total degree $p$ in $\tau_i$. 

Without loss of generality we suppose that $g = 0$. If $g \neq 0$ a standard lifting argument can be used to reduce the problem to the homogeneous case. Then, the weak form reads: find $u \in V_h$ such that
\begin{equation}\label{eq:discweakform}
    A(u,v) = f(v) \quad \forall v \in V_h,
\end{equation}
with
\begin{equation*}
    A(u,v) = \int_{\Omega} \nabla u  \cdot \nabla v \ d\textbf{x} \quad f(v) = \int_{\Omega} fv \ d\textbf{x}.
\end{equation*}

\section{R-tree-based agglomeration}\label{sec:rtree}

We briefly summarize the basic properties of the R-tree data structure proposed by Guttman in the seminal paper~\cite{guttman::rtree} and discuss its variants following~\cite{manolopoulos::rtree}.

R-trees are hierarchical data structures used for the dynamic organization of collections of $d$-dimensional geometric objects, which they represent by minimum axis-aligned bounding rectangles (MBRs). In this context, \emph{dynamic} means that inserting or deleting tree elements requires no global reorganization. We use the terms MBR and \emph{bounding box} interchangeably throughout the paper. Each entry of an internal R-tree node identifies a child node and stores the MBR of all entries in that child.
The actual data is stored in the \emph{leaves}, i.e., the terminal nodes of the tree. We summarize these properties in the following definition.

\begin{definition}[R-tree of order $(m,M)$]
    An R-tree of order $(m,M)$ has the following characteristics:
    \begin{itemize}
        \item Each \emph{leaf} node contains between $m$ and $M$ entries, where $m \leq M/2$, except that the root may contain fewer entries if the total number of objects is less than $m$. Each entry is a pair (MBR, id), where id identifies the object and MBR is its minimum bounding rectangle;
        \item Each \emph{internal} node also contains between $m$ and $M$ entries. Each entry is a pair (MBR, p), where p points to a child node, which may be either an internal node or a leaf, and MBR bounds all entries in that child;
        \item All leaves are at the same level;
        \item The root contains at least two entries unless it is a leaf, in which case it may contain zero or one entry. This is the only exception to the requirement that each leaf contain at least $m$ entries.
    \end{itemize}
\end{definition}
Figure~\ref{fig:rtree_example} shows the minimum bounding rectangles of several geometric objects (not shown), while Figure~\ref{fig:rtree_diagram} shows the associated R-tree of order $(2,4)$. In this example, the leaf level stores the minimum bounding rectangles D, E, F, G, H, I, J, K, L, M, and N, while the internal node contains the three MBRs A, B, and C. The empty slots in the internal node and the rightmost leaf indicate that each can store one more entry because the tree has order $(2,4)$.

\begin{figure}[htb]

  \begin{minipage}[b]{0.5\textwidth}
  \centering
  
  \includegraphics{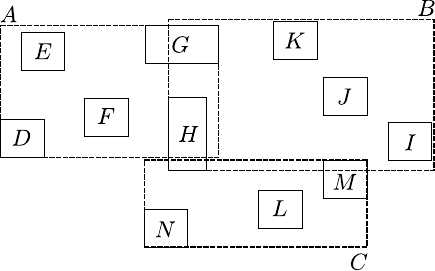}
  
  \caption{Example of the MBRs of several geometric objects (not shown).}\label{fig:rtree_example}
  
  \end{minipage}\hfill           
  \begin{minipage}[b]{0.5\textwidth}
  \centering
  \begin{forest}for tree={bplus, edge={->},l sep=1cm}
    [\mpn{A,B,C,\phantom{M}}
        [\mpn{D,E,F,G},multi edge=one]
        [\mpn{H,I,J,K},multi edge=two]
        [\mpn{L,M,N,\phantom{M}},multi edge=three]
    ]
  \end{forest}
  
  \caption{Corresponding R-tree data structure.}\label{fig:rtree_diagram}
  \end{minipage}
  
\end{figure}

The original R-tree is based solely on minimizing the measure of each MBR. Several variants have been proposed to improve performance or flexibility for particular applications.

For high-quality grid agglomeration, it is desirable to minimize the \emph{overlap} between MBRs. Larger overlaps increase the number of paths processed during queries. Moreover, reducing overlap makes the agglomerated elements conform more closely to their bounding boxes, so that the resulting agglomerated grid is qualitatively closer to a grid of rectangles or boxes. Among the available variants, we adopt the R\textsuperscript{*}-tree data structure introduced in~\cite{beckmann::rtree}. R\textsuperscript{*}-trees pursue the following criteria:
\begin{itemize}
    \item \emph{Minimization of the area covered by each MBR}. This criterion aims at minimizing the dead space (area covered by MBRs but not by the enclosed elements) to reduce the number of paths pursued during query processing;
    \item \emph{Minimization of overlap between MBRs}. The larger the overlap, the larger the expected number of paths followed for a query. As such, this criterion has the same objective as the previous one;
    \item \emph{Minimization of MBR perimeters}. More box-like bounding boxes reduce query times, which deteriorate in the presence of large overlaps or heterogeneous shapes. Moreover, compact objects are easier to pack, so the corresponding MBRs at higher levels are expected to be smaller;
    \item \emph{Maximization of storage utilization}. Increasing the storage utilization per node generally reduces query costs by reducing the height of the tree. This is especially important for large queries satisfied by a substantial fraction of the entries. Conversely, low storage utilization tends to require visiting more nodes during query processing.
\end{itemize}
These requirements may compete with one another. For example, keeping the area and overlap of MBRs small may reduce the number of entries packed into each tree node and hence lower storage utilization. The R\textsuperscript{*}-tree therefore uses heuristics to balance these criteria. We refer to the original paper~\cite{beckmann::rtree} for the relevant algorithmic details.

Our implementation relies on the \textsc{Boost.Geometry} module of the Boost C++ Libraries~\cite{boost} to construct and manipulate R\textsuperscript{*}-trees. Boost provides generic concepts, geometry types, and algorithms for computational geometry. Its interfaces are designed to be independent of the spatial dimension, coordinate system, and data type.

In the remainder of the paper, we do not distinguish between R-trees and R\textsuperscript{*}-trees, since we always employ the latter. For details on the implementation and validation of the R-tree agglomeration procedure for finite element meshes, we refer to~\cite{Feder::R3MG}.

\section{Multilevel preconditioner}\label{sec:preconditioner}
We now describe the procedure that builds degree-of-freedom (DoF) agglomerates and sets up the multigrid preconditioner for the conjugate gradient solver (CG)~\cite{Stiefel::CG}. We employ one multigrid V-cycle as a preconditioner, since this is known to be more robust than using it as a solver~\cite{HariStadlerBirosHighOrderMG}. In particular, we describe the strategies that we use to assemble the transfer matrices that allow us to build the multilevel hierarchy. We emphasize that the level operators $\{A_i\}_i$ we employ are generated through an \emph{inherited} approach~\cite{AntoniettiSartiVeraniDGhpMG}, i.e. it holds that
\begin{equation*}
    A_i = P_i^TAP_i,
\end{equation*}
where $P_i$ is the prolongation operator from the i-th level to the finest level and $P_i^T$ is the restriction operator. It follows that in order to obtain all the level operators, it suffices to define the prolongation operators $P_i$.

In~\cite{Feder::R3MG}, it has been shown that it is possible to agglomerate mesh elements through the R-tree data structure to build a nested sequence of meshes that naturally induces a multilevel hierarchy. To build the R-tree, MBRs of the mesh elements $\text{MBR}(\tau_i)$ serve as leaves, and mesh elements are agglomerated into polytopic elements. 

In order to build a full multilevel hierarchy in a conforming setting, additional care must be taken. Indeed, in the discontinuous Galerkin setting employed in~\cite{Feder::R3MG}, it is enough to simply build the transfers $P_i$ as the natural injection operator between discontinuous finite element spaces. The coarse finite element space on the polytopic element $K$ is defined by restriction. In particular, let $B_K$ be the bounding box of $K$: the coarse basis functions are defined by restricting the basis of $\mathcal{Q}^{p'}(B_K)$ to $K$. The nestedness of the grid hierarchy induces a nested sequence of finite element spaces. 

In our setting, however, we must account for the fact that there is no redundancy of support points located on the same vertices. To deal with the uniqueness of the interface support points, we have to distinguish between the following: 
\begin{itemize}
    \item building the prolongation from the finest level $M$ to any agglomerated level $l \in \{1, \dots, M-1\}$;
    \item building prolongations between consecutive agglomerated levels.
\end{itemize}
Let $\{\mathcal{T}_l\}_{l = 1}^M$ with $\mathcal{T}_M = \mathcal{T}_h$ be a sequence of nested agglomerated grids generated through the R-tree outlined in Section~\ref{sec:rtree}. Let $V^l_h$ be the finite element space on level $l \geq 1$, and let $V^M_h = V_h$. We can define $V_h^{M-1}$ through the restriction operator $P^T_{M-1}$.
In order to build the prolongation from level $M-1$ to level $M$, we can split the fine support points between the bounding boxes. In fact, to build the natural injection, only point values of the coarse basis functions are needed. In other words, the coarse basis functions are defined by interpolating the basis functions of $\mathcal{Q}^{p'}(B_K)$ with the basis functions of the fine elements $\tau_i \subset K$. If a support point $p_i$ of a basis function $\phi_i$ on $\tau_i$ lies in more than one agglomerate $K$, then $\phi_i$ may be used to interpolate the basis of \emph{only one} of the $\mathcal{Q}^{p'}(B_K)$ for which $p_i \in B_K$. Concretely, to build the injection operator we:
\begin{enumerate}
    \item visit each agglomerate $K$ in the order they are listed in the array when extracted;
    \item build a local interpolation operator on $\mathcal{Q}^{p'}(B_K)$ using only the fine basis functions whose support points in $K$ have not already been used;
    \item distribute the local interpolation operator to the global injection matrix according to the DoF numbering.
\end{enumerate}
We provide a visualization of the procedure we use to uniquely assign support points in Figure~\ref{fig:bbox_grid} with $p = p' = 1$. 

\begin{figure} 
  \centering
  \includegraphics[width=0.5\textwidth]{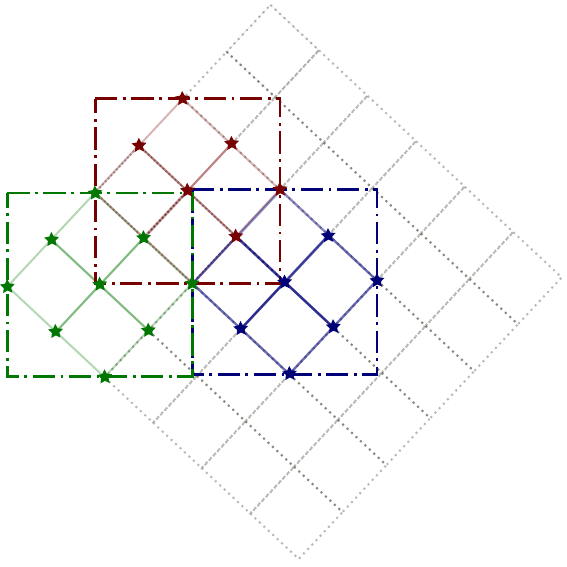}
  \caption{Assignment of support points of the basis functions. If the order of the agglomerates (bounding boxes with dash-dot line) in the array is green, red, blue we first visit the green agglomerate and assign all the support points to it (marked by a green star). Then, the red agglomerate can only own the points marked with a red star. In the same way the blue agglomerate can only own the points marked with a blue star.}
  \label{fig:bbox_grid}
\end{figure}

Prolongation operators, between the agglomerated levels, are built in the same way as in~\cite{Feder::R3MG} since there is again redundancy of support points. If we call $\tilde{P}_j^{j+1}$ the prolongation operator between agglomerated levels $j$ and $j+1$, with $j \leq M - 2$, we can define $P_i$ as 
\begin{equation*}
    P_i = P_{M-1}\tilde{P}_{M-2}^{M-1}\cdots\tilde{P}_{i+1}^{i+2}\tilde{P}_{i}^{i+1}.
\end{equation*}
It follows that, if $\{\phi_j \}_{j=1}^{n_h}$ are the basis functions of $V_h$ then $P_l \in \mathbb{R}^{n_h \times n_l}$ and
\begin{equation*}
    V^l_h = \left\{\sum_{j=1}^{n_h} (P_l c)_j \phi_j : c \in \mathbb{R}^{n_l}\right\}.
\end{equation*}
With these definitions, we obtain that $V^1_h \subset V^2_h \subset \dots \subset V^{M-1}_h \subset V_h$, i.e. the spaces are nested. We refer to the agglomeration strategy described above as \emph{cell-based agglomeration}. We report in Figure~\ref{fig:partitioning} a visualization of the agglomerates and the associated bounding boxes at a fixed level.

\begin{figure} 
  \centering
  \includegraphics[width=0.5\textwidth]{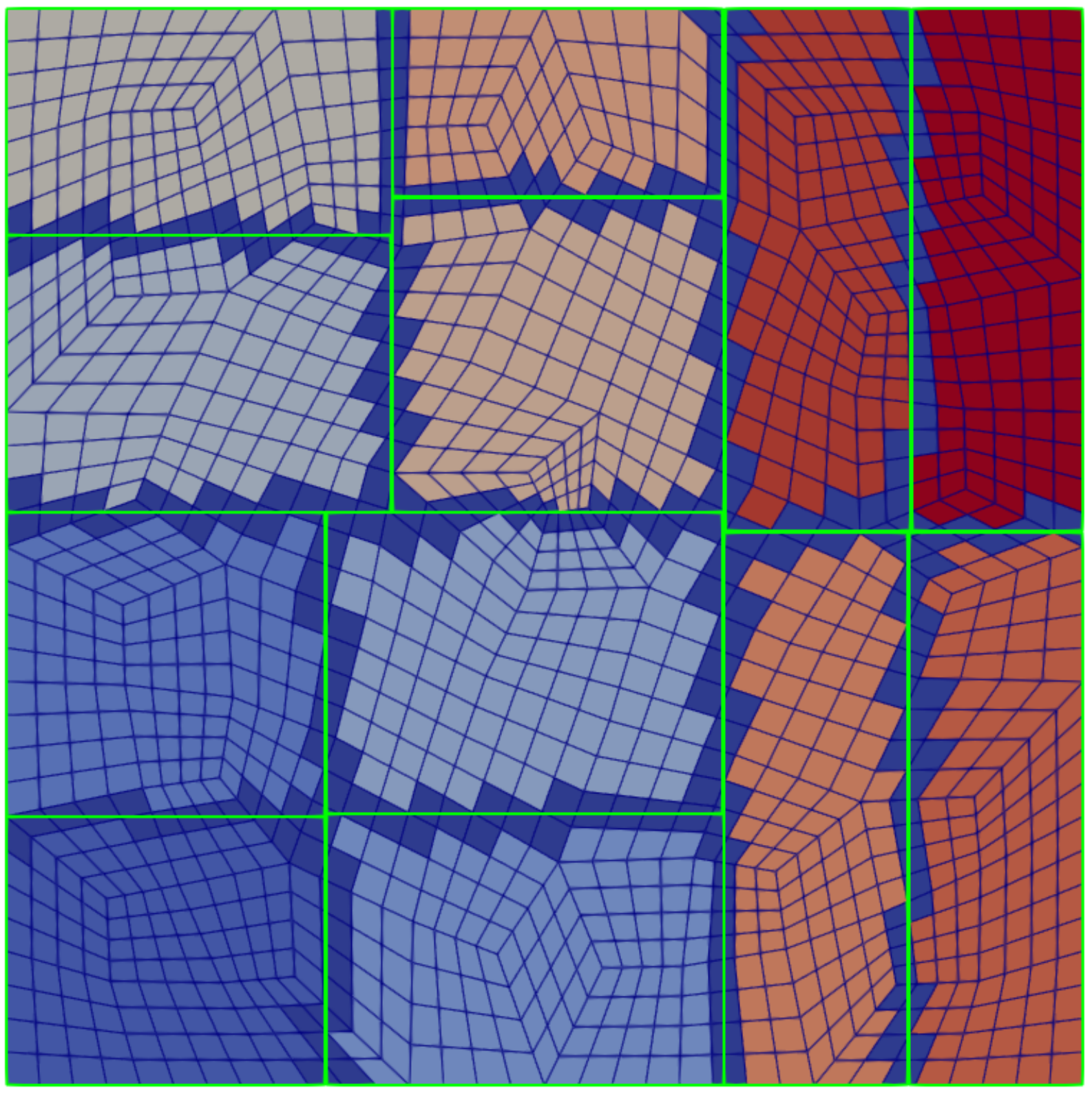}
  \caption{Agglomerates of the mesh elements obtained through the R-tree. We report in dark-blue the overlap between agglomerates: the support points of those mesh elements are split between agglomerates. Each of the overlap mesh elements is assigned to more than one agglomerate, each support point instead is uniquely assigned to one agglomerate. In green we highlight the bounding boxes of the agglomerates.}
  \label{fig:partitioning}
\end{figure}

\begin{remark}\label{rmrk:parameters}
It is important that each agglomerate contains enough support points to build the local interpolation operator. If there are not enough support points, the sub-level matrices will be singular. The parameters $(m,M)$ of the R-tree must be tuned accordingly to ensure that the local interpolations are successful. The correct values for $m$ and $M$ also depend on the polynomial degrees $p$ and $p'$. To increase the robustness of the procedure, we can increase $m$ or skip the leaves level of the R-tree in the agglomeration. We usually choose the latter, but both strategies are viable.
\end{remark}

In order to build the transfer operators $P_i$, we only need point evaluations of the basis functions. This suggests another agglomeration strategy: given the mesh $\mathcal{T}_h$, we can agglomerate using an R-tree whose leaves are the support points of the basis functions of $V_h$. The generated agglomerates are composed of points. We can then build the transfers $P_i$ in exactly the same way as for cell-based agglomeration. The only difference is that there is no need to assign shared support points. In fact, the R-tree identifies the points uniquely owned by each bounding box $B_K$ of a point agglomerate $K_p$. We call this agglomeration strategy \emph{point-based agglomeration}. We emphasize that the shapes of agglomerates produced by point-based agglomeration can differ significantly from those produced by cell-based agglomeration.

\begin{remark}
R-tree-based agglomeration is a heuristic procedure: the mi\-ni\-mi\-za\-tion/ma\-xi\-mi\-za\-tion criteria it tries to satisfy are not guaranteed to hold. It might happen that, even though an agglomerate contains enough support points, their positions in the bounding box do not allow local interpolation. We usually do not observe this phenomenon since the heuristics try to create agglomerates of good geometric quality (we refer to~\cite{Feder::R3MG} for an extensive validation of the R-tree-based agglomeration procedure).
\end{remark}

\section{Convergence analysis of the two-level preconditioner}\label{sec:analysis}
In this section, we show the convergence properties of the two-level version of our multilevel preconditioner. The convergence analysis is based on the two-level subspace-correction framework described in~\cite{xu:review} by Xu and Zikatanov. We follow the general approach to the construction of a coarse space. The core idea of the analysis is to extend the Nicolaides coarse space~\cite{Nicolaides1987Deflation,victorita::DD} to higher order polynomials instead of just constants. We first show that if we employ the Nicolaides coarse space (i.e. $p \geq 1, p' = 0$), the multigrid algorithm converges. Then, we generalize to higher order polynomial degrees.

For ease of readability, from now on we drop the subscript $h$ from the finite element space $V_h$.
Let $V'$ be the usual dual space of $V$. Then
\begin{itemize}
    \item We consider the classical piecewise Lagrange polynomials as a basis for $V$;
    \item $B_i$, for $i = 1, \dots,  J$, are the bounding boxes produced by the R-tree at the coarse level;
    \item $\mathcal{P}_i  = \{p_1^i, \dots,  p^i_{l_i} \} \subset B_i$ are the support points of the basis functions of $V$ that are uniquely assigned to the box $B_i$ as previously described;
    \item $R : V' \to V$ is a converging diagonal smoother. We set $R = \omega D^{-1}$, where $D = \text{diag}(A)$ is the diagonal of the stiffness matrix, i.e. $R$ is the damped Jacobi smoother. Let $\bar{R} = R + R' - R'AR$ where $R'$ is the adjoint of $R$. It is well-known that the damped Jacobi smoother is converging if and only if $0 < \omega < 2/\rho(D^{-1}A)$. During the analysis, as explained in~\cite{xu:review}, we will replace $\bar{R}^{-1}$ with a simpler but spectrally equivalent symmetric positive definite (SPD) operator $D = \text{diag}(A)$.
\end{itemize}
\begin{remark}
    The smoother we use in the numerical experiments is not the classical Jacobi diagonal smoother. We use the Chebyshev acceleration of the Jacobi method~\cite{ADAMS2003593}, i.e. we are replacing $R$ with an appropriate Chebyshev polynomial of degree three $\text{Cheb}_3(R)$. We will develop the theory for the Jacobi smoother: in the experiments we are just using the accelerated version of the Jacobi iteration.  We will not provide explicit convergence estimates in terms of the Chebyshev smoother: estimates in terms of the Jacobi smoother will provide a sufficient proxy. 
\end{remark}
Following the construction in~\cite{victorita::DD}, we define an overlapping partition of $\Omega$
\begin{equation}
    \Omega = \bigcup_{i=1}^J \Omega_i, \quad \Omega_i = \bigcup_{\tau_j \ : \ \mathcal{P}_i\cap \bar{\tau}_j \neq \emptyset} \tau_j .
\end{equation}
Let $H_i = \text{diam}(\Omega_i)$ and $H = \max_{i=1,\dots,J} H_i$. We define spaces $V_1, V_2, \dots, V_J$ as
\begin{equation*}
    V_i = \Span_{j = 1, \dots, l_i} \{\phi_j^i \},
\end{equation*}
where $\phi_j^i$ is a basis function of an element $\tau_{j'} \subset \Omega_i$ such that $\phi_j^i(p_j^i) = 1$ for $p_j^i \in \mathcal{P}_i$. From the definition of the basis function, we have that for each $v_i \in V_i$, $\text{supp}(v_i) \subset \Omega_i$. Each space $V_i$ is not necessarily a subspace of $V$ (due to boundary conditions), but is tied to $V$ by $I_h^V$, the standard interpolation operator on $V$. Following the notation in~\cite{xu:review}, since $I_h^V (V_i) \subset V$, we define $\Pi_i : V_i \to V$ for every $i$ as 
\begin{equation*}
\Pi_i = I_h^{V}.
\end{equation*} 
We emphasize that the main role of the interpolation $I_h^V$ is to enforce the boundary conditions.

For each $V_i$ we need to define a coarse space $V_i^c \subset V_i$. Let $I_h^{V_i}$ be the local interpolation operator on $V_i$, i.e.
\begin{equation}\label{eq:localinterpolation}
    I_h^{V_i}v = \sum_{j=1}^{l_i} v(p_j^i)\phi_j^i .
\end{equation}
We define the local coarse space for $i=1,\dots,J$ as
\begin{equation}
    V_i^c = \Span \Big\{I_h^{V_i}(\phi^{DG}_{i,q}) : \phi_{i,q}^{DG} \in \mathcal{Q}^{p'}(B_i) \Big\},
\end{equation}
where $p' \geq 0$ and $\{\phi_{i,q}^{DG}\}_q$ is a basis of $\mathcal{Q}^{p'}(B_i)$. Let $\widetilde{W} = V_1 \times V_2 \times \dots V_J$ be a space with scalar product
\begin{equation*}
    (\tilde{u}, \tilde{v}) = \sum_{i=1}^J (u_i, v_i),
\end{equation*}
where $(\cdot, \cdot)$ is the standard $L^2$ scalar product, $\tilde{u} = (u_1, \dots , u_J)^T$, $\tilde{v} = (v_1, \dots , v_J)^T$, $u_i, v_i \in V_i$. Let $\Pi_W : \widetilde{W} \to V$ be
\begin{equation*}
    \Pi_W \tilde{u} = \sum_{i=1}^J \Pi_i u_i.
\end{equation*}
Let $\Pi_i'$ be the adjoint of $\Pi_i$. We can define local SPD operators $A_i : V_i \to V_i'$ as 
\begin{equation*}
    A_i = \Pi_i'A\Pi_i .
\end{equation*}
We also define $\widetilde{A}_W = \text{diag}(A_1, A_2, \dots, A_J)$ and the smoother restrictions $D_i : V_i \to V_i'$ as follows
\begin{equation*}
    D_i = \Pi_i' D  \Pi_i .
\end{equation*}
Again, we define $\widetilde{D} = \text{diag}(D_1, D_2, \dots, D_J)$. Let $Q_i : V_i \to V_i^c$ be the local orthogonal projection with respect to the scalar product $(\cdot, \cdot)_{D_i}$ defined as
\begin{equation*}
    (u_i, v_i)_{D_i} = (D_iu_i,v_i) .
\end{equation*}

\begin{assumptions}\label{xu_core_assumptions}
    \begin{enumerate}
        \item For all $\tilde{w} \in \widetilde{W}$, it holds that 
        \begin{equation}\label{eq:smootherstab}
            \norm{\Pi_W \tilde{w}}_D^2 \leq C_{1}\norm{\tilde{w}}^2_{\widetilde{D}} .
        \end{equation}
        \item For all $u \in V$ there exists $\tilde{w} \in  \widetilde{W}$ with $u = \Pi_W\tilde{w}$ such that
        \begin{equation}\label{eq:stabledec}
            \norm{\tilde{w}}^2_{\widetilde{A}_W} \leq C_2 \norm{u}^2_A .
        \end{equation}
    \end{enumerate}
\end{assumptions}
\noindent Finally, we define, for each local space $V_i^c$, the ``conditioning'' of the coarse space as follows
\begin{equation}\label{eq:condcoarse}
    \mu_i^{-1}(V_i^c) = \max_{v_i \in V_i} \min_{v_i^c \in V_i^c} \frac{\norm{v_i - v_i^c}_{D_i}^2}{\norm{v_i}^2_{A_i}} = \max_{v_i \in V_i} \frac{\norm{v_i - Q_iv_i}_{D_i}^2}{\norm{v_i}^2_{A_i}}.
\end{equation}
We quote the following result from~\cite{xu:review}:
\begin{theorem} \label{xu:core_theo}
    Let $\mu_c = \min_{i = 1, \dots, J} \mu_i(V_i^c)$. If Assumptions \ref{xu_core_assumptions} hold then the two-level algebraic multigrid algorithm whose coarse space is $V^c = \sum_{i=1}^J \Pi_i V_i^c$ converges with rate
    \begin{equation}\label{eq:errpropagation}
        \norm{E}_A^2 \leq 1 - \frac{\mu_c}{C_1 C_2 c^D},
    \end{equation}
    where $c^D$ is such that
    \begin{equation*}
        \norm{v}^2_{\bar{R}^{-1}} \leq c^D \norm{v}^2_{D} \quad \forall v \in V.
    \end{equation*}
\end{theorem}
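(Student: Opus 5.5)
The proof follows the standard Xu--Zikatanov two-level framework. First we write the error propagation operator as $E=(I-T_c)(I-RA)$, or its symmetrized version, where $T_c=\Pi_c A_c^{-1}\Pi_c' A$ is the $A$-orthogonal projection onto $V^c$. The identity from \cite{xu:review} then gives
\begin{equation*}
\norm{E}_A^2 = 1-\frac{1}{K(V^c)},\qquad K(V^c)=\max_{v\in V}\min_{v_c\in V^c}\frac{\norm{v-v_c}^2_{\bar R^{-1}}}{\norm{v}^2_A}.
\end{equation*}
With this identity the theorem reduces to bounding $K(V^c)\le C_1C_2c^D/\mu_c$. The first step uses the definition of $c^D$ to replace $\bar R^{-1}$ by $D$: for every $w\in V$ we have $\norm{w}^2_{\bar R^{-1}}\le c^D\norm{w}^2_D$. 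It therefore suffices to exhibit, for each $v\in V$, some $v_c\in V^c$ with $\norm{v-v_c}_D^2\le \frac{C_1C_2}{\mu_c}\norm{v}_A^2$.

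To build $v_c$, fix $v\in V$ and apply the stable decomposition (\ref{eq:stabledec}). This gives $\tilde w=(w_1,\dots,w_J)$ with $v=\Pi_W\tilde w=\sum_i\Pi_i w_i$ and $\sum_i\norm{w_i}^2_{A_i}\le C_2\norm{v}_A^2$. Set $v_c=\sum_i \Pi_i Q_i w_i$, which lies in $V^c=\sum_i\Pi_iV_i^c$ because $Q_i w_i\in V_i^c$. Then $v-v_c=\Pi_W\big((w_i-Q_iw_i)_i\big)$. Applying the smoother stability (\ref{eq:smootherstab}) to the tuple $(w_i-Q_iw_i)_i\in\widetilde W$ gives
\begin{equation*}
\norm{v-v_c}_D^2\le C_1\sum_{i=1}^J\norm{w_i-Q_iw_i}_{D_i}^2 .
\end{equation*}
By the definition (\ref{eq:condcoarse}) of $\mu_i$, each term satisfies $\norm{w_i-Q_iw_i}^2_{D_i}\le \mu_i^{-1}\norm{w_i}^2_{A_i}\le\mu_c^{-1}\norm{w_i}^2_{A_i}$. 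Summing and using the stable decomposition yields $\norm{v-v_c}_D^2\le \frac{C_1C_2}{\mu_c}\norm{v}^2_A$. Combined with the $c^D$ bound, this gives $K(V^c)\le C_1C_2c^D/\mu_c$, and hence (\ref{eq:errpropagation}).

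I expect the main obstacle to be the first step, namely justifying the identity for $\norm{E}_A$ in the present setting. The issue is that $V^c$ is defined through the maps $\Pi_i=I_h^V$, which enforce the boundary conditions, rather than as a plain subspace sum. I would argue that $V^c\subset V$, so $T_c$ is a genuine $A$-orthogonal projection. I would also argue that $\bar R$ is SPD on $V$ exactly when the damped Jacobi iteration converges, which holds under the stated range of $\omega$. These two facts are precisely the hypotheses of the two-level theorem in \cite{xu:review}, so that theorem can be invoked directly, and the rest is the elementary chain of inequalities above. A secondary point is that the minimization in $K(V^c)$ is over all of $V^c$, so the particular $v_c$ constructed above only provides an upper bound; that is all we need.
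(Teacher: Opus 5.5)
Your proof is correct, and it is exactly the Xu--Zikatanov argument behind this result. The paper does not prove the theorem; it quotes it from \cite{xu:review}, whose proof is the same chain you give: the two-level identity $\|E\|_A^2 = 1 - 1/K(V^c)$, the candidate $v_c = \sum_i \Pi_i Q_i w_i$ built from the stable decomposition, smoother stability, the definition of $\mu_i$, and the $c^D$ comparison. One bookkeeping remark: the paper's ordering $E=(I-RA)(I-i_cA_c^{-1}i_c'A)$ is the one that pairs with $\bar R=R+R'-R'AR$, while your ordering $(I-T_c)(I-RA)$ pairs with $R+R'-RAR'$; the two coincide for the symmetric damped Jacobi smoother $R=\omega D^{-1}$, so your argument is unaffected.
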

\noindent To estimate the convergence rate of the two-level AMG we need to provide estimates of the terms $c^D,\mu_c, C_1, C_2$.
\begin{remark}
   Let $i_c : V^c \to V$ be the injection of $V^c \subset V$ into $V$. We can write $E$ explicitly:
    \begin{equation*}
        E  = (I  - RA)(I - i_{c}A_c^{-1}i_c'A),
    \end{equation*}
    where $A_c = i_{c}'Ai_c$.
\end{remark}

\begin{lemma} \label{lemma:smoothers_equivalence}
    Let $R = \omega D^{-1}$ and $D = \text{diag}(A)$. Then, there exists a constant $c^D > 0$ such that
    \begin{equation*}
        \norm{v}^2_{\bar{R}^{-1}} \leq c^D \norm{v}^2_{D} \quad \forall v \in V.
    \end{equation*}
    The explicit expression of $c^D$ is
    \begin{equation*}
        c^D = \frac{1}{\omega (2 - \omega \rho)}, 
    \end{equation*}
    where $\rho = \rho(D^{-1}A)$ is the spectral radius of $D^{-1}A$.
\end{lemma}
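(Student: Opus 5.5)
The plan is to compute $\bar R$ explicitly for $R=\omega D^{-1}$ and reduce the desired inequality to a spectral statement about the symmetric matrix $D^{-1/2}AD^{-1/2}$.

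Since $D$ is diagonal and SPD, $R'=R=\omega D^{-1}$, and therefore
\begin{equation*}
\bar R = 2\omega D^{-1} - \omega^2 D^{-1}AD^{-1} = \omega D^{-1/2}\bigl(2I - \omega S\bigr)D^{-1/2}, \qquad S := D^{-1/2}AD^{-1/2}.
\end{equation*}
The matrix $S$ is SPD and similar to $D^{-1}A$, so its eigenvalues lie in $(0,\rho]$ with $\rho=\rho(D^{-1}A)$. We assume the convergence condition $0<\omega<2/\rho$. Then the eigenvalues of $2I-\omega S$ lie in $[2-\omega\rho,\,2)$ and are all positive. It follows that $\bar R$ is SPD and that
\begin{equation*}
\bar R^{-1} = \omega^{-1} D^{1/2}\bigl(2I-\omega S\bigr)^{-1} D^{1/2}.
\end{equation*}

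Next, for $v\in V$, identified with its coefficient vector, set $w=D^{1/2}v$. Then
\begin{equation*}
\norm{v}^2_{\bar R^{-1}} = \omega^{-1}\, w^T(2I-\omega S)^{-1}w \le \frac{1}{\omega}\,\lambda_{\max}\bigl((2I-\omega S)^{-1}\bigr)\,|w|^2 = \frac{1}{\omega(2-\omega\rho)}\, v^TDv.
\end{equation*}
The last expression is exactly $c^D\norm{v}_D^2$. The inequality is attained along the eigenvector of $S$ associated with $\rho$, so this constant is sharp.

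The only delicate point is the positivity of $2I-\omega S$. This is where the hypothesis $\omega<2/\rho$ is needed: it guarantees both that $\bar R^{-1}$ exists and that $c^D>0$. The rest of the argument is elementary diagonalization of a symmetric matrix.
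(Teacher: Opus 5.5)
Your proposal is correct and follows essentially the same route as the paper. Both factor $\bar{R} = \omega D^{-1/2}(2I - \omega D^{-1/2}AD^{-1/2})D^{-1/2}$, use $0<\omega<2/\rho$ to invert it, and bound the quadratic form by the largest eigenvalue $1/(2-\omega\rho)$ of the inverted middle factor (the paper gets this through explicit diagonalization and monotonicity of $\theta \mapsto 1/(2-\omega\theta)$), while your sharpness remark along the top eigenvector is a correct addition the paper does not make.
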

\begin{proof}
    Let $M = D^{-1/2}AD^{-1/2}$ with eigenvalues $\theta_1 \leq \dots \leq \theta_n$. The $\theta_i$-s are the eigenvalues of $D^{-1}A$. Let $\rho = \rho(D^{-1}A) = \theta_n$. Since $D^{-1} = D^{-1/2}D^{-1/2}$, and $A = D^{1/2}MD^{1/2}$, we have that
    \begin{equation*}
        \bar{R} = \omega D^{-1/2}\left(2I - \omega M \right)D^{-1/2}.
    \end{equation*}
    Since $0 < \omega < 2/\rho$ by assumption, we have that $2I - \omega M$ is SPD. Then, 
    \begin{equation*}
        \bar{R}^{-1} = \frac{1}{\omega}D^{1/2}\left(2I - \omega M \right)^{-1}D^{1/2}.
    \end{equation*} 
    If we diagonalize $M = U\Lambda U^{T}$, for any vector $v \in \mathbb{R}^n$ we set $z = U^TD^{1/2}v$ and obtain that 
    \begin{equation*}
        \norm{v}^2_{\bar{R}^{-1}} = \frac{1}{\omega}\sum_{i=1}^n \frac{z_i^2}{2 - \omega \theta_i}. 
    \end{equation*}
    Since $f(\theta) = \frac{1}{2 - \omega \theta}$ is increasing in $\theta$ we have that 
    \begin{equation*}
        \norm{v}^2_{\bar{R}^{-1}} \leq \frac{1}{\omega(2 - \omega \rho)}\sum_{i=1}^n z_i^2 = \frac{1}{\omega(2 - \omega \rho)}\norm{v}^2_D.
    \end{equation*}
\end{proof}
We define the overlap and the interior of each agglomerate as follows: let $S_j$ be the set of support points of the fine mesh element $\tau_j$. For each $\Omega_i$ we define
    \begin{itemize}
    \item $\Omega_{i,h} = \bigcup_{j : S_j \not\subset \mathcal{P}_i, \ \tau_j \subset \Omega_i} \tau_j$, i.e. the overlap;
    \item $\Omega_i^{\circ} = \Omega_i \setminus \Omega_{i,h}$ the internal elements of each agglomerate.
\end{itemize} 
Let us impose the following assumptions on the agglomeration hierarchy for the rest of the analysis:
\begin{assumptions}[Admissible hierarchy] \label{admissible_hierarchy}
\begin{enumerate}
\item Each coarse space is built from tensor-product polynomials on the axis-aligned bounding box $B_i$ of the agglomerate. We assume that the aspect ratios of these boxes are uniformly bounded and that the interior region $\Omega_i^\circ$ occupies a nondegenerate portion of $B_i$. We also assume that $\Omega_i^\circ$ is connected, and satisfies Poincaré and polynomial approximation estimates with constants independent of $h$, $H_i$, $p$, and $p'$ up to the standard dependence on the uniform box aspect-ratio bound.
\item We assume that the boundary layer $\Omega_{i,h}$ consists of a uniformly bounded number of fine-cell layers, hence has thickness $O(h)$, and that the overlap of these layers is uniformly bounded.
\item We assume that the boundary $\partial\Omega_{i,h}$ is uniformly shape-regular and its ($d-1$)-dimensional measure scales like $H^{d-1}$.
\item We assume $p'\le p$ and that the local interpolation
from $\mathcal Q^{p'}(B_i)$ is uniformly stable. This can be enforced by using stable tensor-product interpolation nodes on the Cartesian box $B_i$; if the owned support points of the agglomerate are used instead, one must assume that they are uniformly unisolvent and stable for interpolation on $B_i$.
\end{enumerate}
\end{assumptions}
These assumptions are consistent with the R-tree construction used in the numerical experiments: the R-tree ownership rule gives the algebraic decomposition and partition of unity, while the box-regularity, nondegeneracy, and interpolation assumptions exclude degenerate agglomerates for which uniform approximation estimates cannot hold.

From now on, let $a\lesssim b$ mean that $a \leq C b$ with $C$ a constant that does not depend on discretization parameters ($H$ or $h$) or polynomial degrees (we remark that every time we use $\lesssim$ the constant $C$ can change). Let us also impose that $a \sim b$ means that $a \lesssim b$ and $b \lesssim a$.

For this part of the analysis, let us restrict to the case $\mathcal{R}^p = \mathcal{Q}^p$, i.e. quadrilateral or hexahedral elements and consider as basis functions of $V$ the classical tensor-product Lagrange basis functions with Gauss-Lobatto-Legendre (GLL) nodes. To estimate $C_1, C_2$ and $\mu_c$ we first need to prove a few technical lemmas. 
\begin{lemma}\label{app:shape_fun_p_dep}
Let $\hat{Q} = [0,1]^{d=2,3}$ be a reference square or cubic element. Let $\phi_{\mathbf{j}} \in \mathcal{Q}^p(\hat{Q})$ be the tensor-product Lagrange basis functions with GLL nodes on $\hat{Q}$. Let $p \geq 1$, $\mathbf{j} = (j_1, \dots, j_d)$ the multi-index used to enumerate the Lagrange basis function on the reference element. It holds that 
\begin{equation*}
p^{-2d} \lesssim \norm{\phi_{\mathbf{j}}}^2_{L^2(\hat{Q})} \lesssim p^{-d}.
\end{equation*}
\end{lemma}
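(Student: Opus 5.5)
The plan is to reduce to one dimension via the tensor-product structure. Write $\phi_{\mathbf{j}}(x_1,\dots,x_d)=\prod_{k=1}^d \ell_{j_k}(x_k)$, where $\ell_j$ are the one-dimensional Lagrange polynomials of degree $p$ on $[0,1]$ associated with the $p+1$ GLL nodes $\xi_0<\dots<\xi_p$. By Fubini, $\norm{\phi_{\mathbf{j}}}^2_{L^2(\hat Q)}=\prod_{k=1}^d \norm{\ell_{j_k}}^2_{L^2(0,1)}$. It therefore suffices to prove the one-dimensional bounds
\begin{equation*}
p^{-2}\lesssim \norm{\ell_j}^2_{L^2(0,1)} \lesssim p^{-1}, \qquad j=0,\dots,p,
\end{equation*}
uniformly in $j$. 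Taking products then gives $p^{-2d}\lesssim\norm{\phi_{\mathbf{j}}}^2\lesssim p^{-d}$, with constants raised to the power $d\le 3$.

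For both one-dimensional bounds I will use the GLL quadrature rule with weights $w_i$. It is exact for polynomials of degree $2p-1$, and $\ell_j^2$ has degree $2p$, so it is not exact for $\ell_j^2$. I will use the classical norm equivalence on $\mathcal P^p$ instead:
\begin{equation*}
\sum_i w_i q(\xi_i)^2 \;\le\; \norm{q}^2_{L^2(0,1)} \;\le\; \Big(2+\tfrac1p\Big)\sum_i w_i q(\xi_i)^2 .
\end{equation*}
This follows from expanding $q$ in Legendre polynomials: only the top mode $L_p$ is misintegrated, and its discrete norm is $(2+1/p)$ times its exact norm. Applying this with $q=\ell_j$, whose nodal values are $\delta_{ij}$, gives $\norm{\ell_j}^2_{L^2(0,1)}\sim w_j$ uniformly in $p$.

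The problem then reduces to bounding the GLL weights. On $[-1,1]$ they are $w_j=\frac{2}{p(p+1)L_p(x_j)^2}$, with endpoint weights $w_0=w_p=\frac{2}{p(p+1)}\sim p^{-2}$. This gives the lower bound, since $|L_p(x_j)|\le 1$ implies $w_j\ge \frac{2}{p(p+1)}$ for every $j$. For the upper bound I will use the known asymptotics of interior GLL weights, $w_j\approx \frac{\pi}{p}\sqrt{1-x_j^2}\lesssim p^{-1}$. Alternatively, Bernstein's inequality $|L_p(x)|\lesssim (p\sqrt{1-x^2})^{-1/2}$ combined with node spacing also works. The cleanest route is the Christoffel-function characterization: $w_j\le \lambda_{p}(x_j)$, the Christoffel function of degree $p$ for Legendre weight, and $\lambda_p(x)\lesssim p^{-1}\sqrt{1-x^2}+p^{-2}\lesssim p^{-1}$. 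Rescaling from $[-1,1]$ to $[0,1]$ only changes constants.

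The main obstacle is the uniform upper bound $w_j\lesssim p^{-1}$ on the interior weights. I will handle it by citing the classical Christoffel-function estimate, or the explicit interior asymptotics, rather than deriving it. Both bounds are attained: $p^{-2}$ at the endpoint nodes and $p^{-1}$ at the central nodes. This shows the statement gives the correct range up to the $d$-th power.
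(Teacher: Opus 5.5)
Your argument is correct in substance. It shares the paper's skeleton: Fubini reduces to one dimension, then $\norm{\ell_j}^2_{L^2}\sim w_j$, then the GLL weights are bounded. The central one-dimensional step, however, is done differently. The paper expands $\psi_j$ in Legendre polynomials, computes every coefficient, and evaluates the Christoffel--Darboux kernel at the nodes to get the exact identity $\norm{\psi_j}^2_{L^2(-1,1)}=\frac{2p}{2p+1}w_j$. You use the discrete/continuous norm equivalence on $\mathcal P^p$ instead. It needs only exactness up to degree $2p-1$ and $\sum_i w_iL_p(\xi_i)^2=2/p$, which is immediate from the weight formula since every term equals $\frac{2}{p(p+1)}$. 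This is shorter and gives everything the lemma needs. It even recovers the paper's identity in one line: with $(u,v)_N=\sum_iw_iu(\xi_i)v(\xi_i)$, the top Legendre coefficient is $c_p=(\ell_j,L_p)_N/\norm{L_p}_N^2=\frac p2w_jL_p(\xi_j)$, so $\norm{\ell_j}^2=w_j-c_p^2\bigl(\frac2p-\frac{2}{2p+1}\bigr)=\frac{2p}{2p+1}w_j$. For the weights, the paper cites Parter for both bounds. You prove the lower bound directly from $|L_p|\le1$ and cite a Christoffel-function estimate for the upper bound, so both proofs lean on a classical result only for the interior weights.

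Three details need fixing, none fatal.

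(i) Your displayed equivalence has its inequalities reversed. Your own justification says the discrete norm of $L_p$ exceeds its exact norm, so the correct statement is $\norm{q}^2_{L^2}\le\sum_iw_iq(\xi_i)^2\le(2+\tfrac1p)\norm{q}^2_{L^2}$. As written it fails for $q=\ell_j$, since $\norm{\ell_j}^2=\frac{2p}{2p+1}w_j<w_j$. The conclusion $\norm{\ell_j}^2\sim w_j$ is unaffected.

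(ii) The comparison $w_j\le\lambda_n(x_j)$ needs the rule to integrate $q^2$ exactly for $\deg q=n$, i.e.\ $n\le p-1$. So you get $w_j\le\lambda_{p-1}(x_j)$, not $\lambda_p(x_j)$. Indeed, the paper's identity $K_{p-1}(\xi_j)=\frac{p}{(p+1)w_j}$ reads $w_j=\frac{p}{p+1}\lambda_{p-1}(\xi_j)$, while $w_j=\frac{p+1}{p}\lambda_p(\xi_j)>\lambda_p(\xi_j)$. Since $\lambda_{p-1}\lesssim p^{-1}$ too, your upper bound survives.

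(iii) Bernstein's inequality bounds $|L_p|$ from above, hence the weights from below, so it cannot give $w_j\lesssim p^{-1}$. What you need is $L_p(x_j)^2\gtrsim p^{-1}$ at the nodes. A self-contained way to get it: by the Legendre equation, $f=L_p^2+\frac{1-x^2}{p(p+1)}(L_p')^2$ satisfies $f'=\frac{2x}{p(p+1)}(L_p')^2$. Hence $L_p(x_j)^2=f(x_j)\ge f(0)\gtrsim p^{-1}$, where the last step uses the central binomial estimate for $L_{2m}(0)$ and $L_p'(0)=pL_{p-1}(0)$. This removes the need for any citation.
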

\begin{proof}
First of all let us consider the one-dimensional Lagrange polynomials of degree $p$ on the interval $[-1,1]$, $\{\psi_j(x)\}_{j=0}^p$ with GLL nodes $\{\xi_j\}_{j=0}^p$. From~\cite{Canuto2006} we know that
\begin{equation*}
\psi_j(x) = \frac{1}{p(p+1)}\frac{1 - x^2}{\xi_j - x}\frac{L'_p(x)}{L_p(\xi_j)}, \quad j=0,\dots,p,
\end{equation*}
where $L_p, L_p'$ are the standard Legendre polynomial of degree $p$ on $[-1,1]$ and its derivative. We shall prove that 
\begin{equation}
\norm{\psi_j}^2_{L^2(-1,1)} = \frac{2p}{2p + 1}w_j, \quad j=0,\dots,p,
\end{equation}
where $w_j$ is the $j$-th weight of the corresponding GLL quadrature, i.e.
\begin{equation*}
w_j = \frac{2}{p(p+1)}\frac{1}{[L_p(\xi_j)]^2}, \quad j=0,\dots,p.
\end{equation*} 
Since we know that the GLL quadrature formula is exact up to polynomials of degree $2p - 1$, for $k = 0,1,\dots,p-1$ we have that 
\begin{equation*}
 \int_{-1}^1 \psi_j(x)L_k(x) dx = \sum_{i=0}^p w_i \psi_j(\xi_i)L_k(\xi_i) = w_j L_k(\xi_j).
\end{equation*}
It follows that, if we expand in the Legendre basis $\psi_j = \sum_{k=0}^p c_k^{(j)}L_k$ we have that 
\begin{equation*}
c_k^{(j)} = \frac{1}{\norm{L_k}^{2}_{L^2(-1,1)}}\int_{-1}^1 \psi_j(x)L_k(x) dx = \frac{2k + 1}{2}w_j L_k(\xi_j), \quad k=0,\dots,p-1,
\end{equation*}
where we have used the fact that $\norm{L_k}^{2}_{L^2(-1,1)} = 2/(2k + 1)$ (see, for instance~\cite{Canuto2006}).
Now, consider the polynomial $q_j(x) = \frac{(1-x^2)}{\xi_j - x}L'_p(x)$. The leading coefficient of $q_j(x)$ is $-p\alpha_p$ where $\alpha_p$ is the leading coefficient of $L_p(x)$. Then, by expanding $q_j$ in the Legendre basis, we obtain that the Legendre coefficient of $L_p$ in $q_j$ is $-p$. Then,
\begin{equation*}
\int_{-1}^1  q_j(x)L_p(x) dx = -p \norm{L_p}^2_{L^2(-1,1)}  = \frac{-2p}{2p + 1},
\end{equation*}
and by substitution we obtain that 
\begin{equation}\label{last_expansion_term}
c_p^{(j)} = \frac{2p + 1}{2} \int_{-1}^1 \psi_j(x) L_p(x) dx = \frac{2p +1 }{2}\frac{-1}{p(p+1)L_p(\xi_j)}\frac{-2p}{2p + 1} = \frac{1}{(p+1)L_p(\xi_j)}.
\end{equation}
From Parseval's identity we know that 
\begin{equation*}
\norm{\psi_j}^2_{L^2(-1,1)} = \sum_{k=0}^p \frac{2}{2k + 1}[c_k^{(j)}]^2 = w_j^2\sum_{k=0}^{p-1} \frac{2k + 1}{2}L_k(\xi_j)^2 + \frac{2}{2p + 1}[c_p^{(j)}]^2 .
\end{equation*}
From the \emph{Christoffel-Darboux formula} (see e.g. Corollary 3.3 in~\cite{ShenTangWang2011}) we have that 
\begin{equation*}
\sum_{k=0}^{p-1} \frac{2k + 1}{2}L_k(\xi_j)^2 = \frac{p}{2}[L_p'(\xi_j)L_{p-1}(\xi_j) - L_{p-1}'(\xi_j)L_p(\xi_j)] := K_{p-1}(\xi_j).
\end{equation*}
Now, let us prove that
\begin{equation}\label{darboux_formula}
K_{p-1}(\xi_j) = \frac{p^2}{2}[L_p(\xi_j)]^2.
\end{equation}
\begin{itemize}
\item If $j=0,p$, i.e. $\xi_j = \pm 1$, then from the fact that $L_p(\pm 1) = (\pm 1)^p$ and $L_p'(\pm 1) = \frac{1}{2}(\pm 1)^{p-1}p(p+1)$ (see e.g.~\cite{ShenTangWang2011}) it is straightforward to prove that
\begin{equation*}
K_{p-1}(\pm 1) = \frac{p^2}{2};
\end{equation*}
\item If $j=1,\dots,p-1$, by combining the derived three term recurrence and the derivative recurrence relation of Legendre polynomials (see e.g.~\cite{ShenTangWang2011})
\begin{align*}
(p+1)L'_{p+1}(x) &= (2p+1)[L_p(x) + xL'_p(x)] - pL'_{p-1}(x), \\ (2p+1)L_p(x) &= L'_{p+1}(x) - L'_{p-1}(x),
\end{align*}
and the fact that $L'_p(\xi_j) = 0$ we obtain that 
\begin{equation*}
L'_{p-1}(\xi_j) = -pL_p(\xi_j).
\end{equation*}
It follows that
\begin{equation*}
K_{p-1}(\xi_j) = \frac{p^2}{2}[L_p(\xi_j)]^2.
\end{equation*}
\end{itemize}
Combining (\ref{darboux_formula}) with the formula of the weights $w_j$ we obtain that
\begin{equation}\label{final_darboux}
K_{p-1}(\xi_j) = \frac{p^2}{2}\frac{2}{p(p+1)w_j} = \frac{p}{(p+1)w_j}.
\end{equation}
Finally, using (\ref{last_expansion_term}) and (\ref{final_darboux}), we obtain
\begin{align*}
\norm{\psi_j}^2_{L^2(-1,1)} &= w_j^2 \frac{p}{(p+1)w_j} + \frac{2}{2p + 1}\frac{1}{(p+1)^2[L_p(\xi_j)]^2} \\
&= w_j \frac{p}{(p+1)} + \frac{1}{2p + 1}\frac{w_j p}{(p+1)} \\
&= \frac{2p}{2p+1}w_j.
\end{align*}

Let us fix $\phi_{\mathbf{j}}(\mathbf{x}) = \prod_{i=1}^d \psi_{j_i}(x_i)$, where $j_i = 0, \dots, p$ and $\mathbf{x} = (x_1, \dots, x_d)$. Then, we have that 
\begin{equation*}
\norm{\phi_{\mathbf{j}}}^2_{L^2(\hat{Q})} = \int_{\hat{Q}} \phi_{\mathbf{j}}(\mathbf{x})^2 d\mathbf{x} = \prod_{i=1}^d \int_0^1 \psi_{j_i}(x_i)^2 dx_i \sim \prod_{i=1}^d w_{j_i},
\end{equation*}
where in the last inequality we have rescaled the integration domain from $[0,1]$ to $[-1,1]$.
Finally, we obtain the thesis by observing that 
\begin{equation*}
    p^{-2d} \lesssim \prod_{i=1}^d w_{j_i} \lesssim p^{-d},
\end{equation*}
since $p^{-2} \lesssim w_{j_i} \lesssim p^{-1}$ (see e.g. lemma 3.3 in~\cite{Parter1999}).
\end{proof}
We need another technical lemma before we can get into the analysis of the two-level preconditioner:
\begin{lemma}\label{lemma:trace_inequality}
    Let $u \in V$ and 
    \begin{equation*}
        \bar{u}_i = \frac{1}{|\Omega_i|}\int_{\Omega_i} u \ dx, \quad i=1,\dots,J.
    \end{equation*}
    Consider two overlapping agglomerates $\Omega_i$ and $\Omega_j$ and let $\Omega_{ij} = \Omega_i \cap \Omega_j$. Let $\Gamma_{ij} = \partial \Omega_{ij}$, if $|\Gamma_{ij}| \sim H^{d-1}$ then it holds that
    \begin{equation*}
        |\bar{u}_i - \bar{u}_j|^2H^{d-1} \lesssim H_i|u|^2_{H^1(\Omega_i)} + H_j|u|^2_{H^1(\Omega_j)}.
    \end{equation*}
\end{lemma}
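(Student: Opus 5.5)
The plan is to compare both averages with a common quantity living on the interface $\Gamma_{ij}$, namely the boundary mean
\begin{equation*}
\bar u_{\Gamma} = \frac{1}{|\Gamma_{ij}|}\int_{\Gamma_{ij}} u \, ds .
\end{equation*}
Writing $\bar u_i - \bar u_j = (\bar u_i - \bar u_\Gamma) - (\bar u_j - \bar u_\Gamma)$, it suffices to prove the one-sided estimate
\begin{equation*}
|\bar u_i - \bar u_\Gamma|^2 H^{d-1} \lesssim H_i |u|^2_{H^1(\Omega_i)} .
\end{equation*}
The same bound for $j$ then follows by symmetry, and the two are combined with $(a+b)^2\le 2a^2+2b^2$.

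For the one-sided estimate, observe that $\bar u_i$ is a constant, so $\bar u_i - \bar u_\Gamma$ is the boundary mean of $u - \bar u_i$ over $\Gamma_{ij}$. By Cauchy--Schwarz,
\begin{equation*}
|\bar u_i - \bar u_\Gamma|^2 \le |\Gamma_{ij}|^{-1}\norm{u-\bar u_i}^2_{L^2(\Gamma_{ij}\cap\bar\Omega_i)} .
\end{equation*}
Since $\Gamma_{ij}=\partial(\Omega_i\cap\Omega_j)$ lies in $\bar\Omega_i$, we can apply a scaled trace inequality on $\Omega_i$. This is legitimate by Assumptions~\ref{admissible_hierarchy}, which give uniform shape-regularity and connectedness of the boundary pieces and measures scaling like $H^{d-1}$. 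Concretely, for any $w\in H^1(\Omega_i)$,
\begin{equation*}
\norm{w}^2_{L^2(\Gamma_{ij})} \lesssim H_i^{-1}\norm{w}^2_{L^2(\Omega_i)} + H_i |w|^2_{H^1(\Omega_i)} .
\end{equation*}
Taking $w=u-\bar u_i$, the Poincaré inequality on $\Omega_i$ gives $\norm{u-\bar u_i}^2_{L^2(\Omega_i)}\lesssim H_i^2|u|^2_{H^1(\Omega_i)}$, so both trace terms are bounded by $H_i|u|^2_{H^1(\Omega_i)}$. The Poincaré constant is independent of $h,H,p$ by the connectedness and regularity assumptions, applied to $\Omega_i$ or, if needed, to $\Omega_i^\circ$ plus the $O(h)$ layer. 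Finally, multiplying through by $H^{d-1}$ and using the hypothesis $|\Gamma_{ij}|\sim H^{d-1}$ to cancel $|\Gamma_{ij}|^{-1}H^{d-1}\lesssim 1$ gives the claim.

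The main obstacle is justifying the trace and Poincaré constants uniformly. Agglomerates are unions of fine cells, so their boundaries may be jagged at scale $h$, and the inequality must not pick up factors of $H/h$. I would handle this in one of two ways. The first is to note that $\Gamma_{ij}$ sits inside the overlap layer, which by assumption has thickness $O(h)$ and shape-regular boundary with measure $\sim H^{d-1}$, and then apply the trace inequality on a Lipschitz subdomain of $\Omega_i$ of diameter $\sim H_i$ containing $\Gamma_{ij}$ on its boundary; Assumption~\ref{admissible_hierarchy}(1)--(3) supplies the uniform Lipschitz character. The second option, used only if that fails, is to obtain the trace bound on $\Omega_i$ via an $H^1$-extension to the bounding box $B_i$, whose aspect ratio is uniformly bounded, and use the standard box trace inequality there.
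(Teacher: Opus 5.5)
Your proof is correct and follows the paper's route. Both arguments bound $\norm{u-\bar u_i}^2_{L^2(\Gamma_{ij})}\lesssim H_i|u|^2_{H^1(\Omega_i)}$ by a trace inequality plus Poincaré and then use $H^{d-1}\lesssim|\Gamma_{ij}|$; the paper skips your boundary mean $\bar u_\Gamma$ by splitting $|\bar u_i-\bar u_j|^2|\Gamma_{ij}|=\norm{\bar u_i-\bar u_j}^2_{L^2(\Gamma_{ij})}$ directly with the triangle inequality.

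The only substantive difference favors you. The paper applies a multiplicative trace inequality on the overlap $\Omega_{ij}$. That region is an $O(h)$-thick layer, so no $h$-independent trace constant exists there (test with a constant function). Your additive, $H_i$-scaled inequality on $H_i$-sized Lipschitz subdomains of $\Omega_i$ (e.g.\ $\Omega_i$ and $\Omega_i\setminus\bar\Omega_j$, which carry the pieces of $\Gamma_{ij}$ on their boundaries) is the correctly scaled form. Your fallback via extension to $B_i$ would not help, however, since $\Gamma_{ij}$ is not part of $\partial B_i$.
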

\begin{proof}
    By the multiplicative trace inequality (see, for instance,~\cite{BrennerScott2008}) we have that
    \begin{align*}
        \norm{u - \bar{u}_i}_{L^2(\Gamma_{ij})}^2 &\lesssim \norm{u - \bar{u}_i}_{L^2(\Omega_{ij})}\norm{u - \bar{u}_i}_{H^1(\Omega_{ij})} \lesssim \norm{u - \bar{u}_i}_{L^2(\Omega_i)}\norm{u - \bar{u}_i}_{H^1(\Omega_i)} \\ &\lesssim H_i(1 + H_i)|u|_{H^1(\Omega_i)}^2 \lesssim H_i|u|_{H^1(\Omega_i)}^2,
    \end{align*}
    where we used the Poincaré inequality. The same holds for $\Omega_j$. Then, 
    \begin{align*}
        |\bar{u}_i - \bar{u}_j|^2H^{d-1} &\lesssim |\bar{u}_i - \bar{u}_j|^2|\Gamma_{ij}| \lesssim \norm{u - \bar{u}_i}_{L^2(\Gamma_{ij})}^2 + \norm{u - \bar{u}_j}_{L^2(\Gamma_{ij})}^2 \\ &\lesssim H_i|u|^2_{H^1(\Omega_i)} + H_j|u|^2_{H^1(\Omega_j)}.
    \end{align*}
\end{proof}

In our setting, to estimate $C_1$ we prove the following lemma:
\begin{lemma}
For any $\tilde{w} \in \widetilde{W}$ it holds that
\begin{equation}\label{eq:C1}
    \norm{\Pi_W \tilde{w}}_D^2 \leq \norm{\tilde{w}}_{\tilde{D}}^2,
\end{equation}
\end{lemma}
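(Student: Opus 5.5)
The plan is to reduce the inequality to an identity in coordinates, using the fact that the sets $\mathcal{P}_i$ of owned support points are pairwise disjoint. By construction of the R-tree ownership rule, every fine support point is assigned to exactly one box. Hence each fine Lagrange basis function $\phi_k$ of $V$ (an interior DoF) is the image under $\Pi_i = I_h^V$ of exactly one local basis function $\phi^i_j$, namely the one with $p_j^i$ equal to the support point of $\phi_k$. Boundary support points carry no DoF of $V$, and $I_h^V$ sends the corresponding local basis functions to zero.

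The first step is to write $w_i = \sum_{j=1}^{l_i} c^i_j \phi^i_j$. Because $I_h^V$ only evaluates at the nodes of $V$, we get $\Pi_i w_i = \sum_{j:\,p^i_j\ \text{interior}} c^i_j \phi_{k(i,j)}$. The global coefficient vector of $\Pi_W\tilde w$ is therefore obtained by concatenating the local coefficient vectors, with the boundary entries dropped. Because ownership is disjoint, no fine DoF receives contributions from two different $i$. Since $D=\mathrm{diag}(A)$, this gives
\begin{equation*}
\norm{\Pi_W\tilde w}_D^2=\sum_{k} A_{kk}\,\big(u_k\big)^2=\sum_{i=1}^J\sum_{j:\,p^i_j\ \text{interior}} A_{k(i,j)k(i,j)}\,(c^i_j)^2 .
\end{equation*}

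The second step is to compute $\norm{w_i}_{D_i}^2=(\Pi_i' D\Pi_i w_i,w_i)=\norm{\Pi_i w_i}_D^2$. Applying the same diagonal structure to the single function $\Pi_i w_i$ gives exactly the inner sum above for index $i$. Summing over $i$ yields $\norm{\tilde w}^2_{\widetilde D}=\norm{\Pi_W\tilde w}_D^2$. So the inequality in fact holds with equality, and $C_1=1$.

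The main care point is the identification of $D_i$. The operator $D_i=\Pi_i'D\Pi_i$ is the pullback, not a local diagonal of $A_i$. This is what makes the possible non-injectivity of $\Pi_i$ at boundary nodes harmless, since those coefficients contribute zero on both sides. The other thing to verify is that the cross terms $(D\Pi_iw_i,\Pi_jw_j)$ vanish for $i\neq j$. They do, because $D$ is diagonal in the nodal basis and the supports of the coefficient vectors of $\Pi_i w_i$ and $\Pi_j w_j$ are disjoint index sets. This is exactly where the unique-ownership rule of Section~\ref{sec:preconditioner} is used.
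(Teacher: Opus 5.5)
Your proof is correct and follows the same approach as the paper. Both arguments use the disjoint ownership $\mathcal N_h^0=\dot\bigcup_{i}\mathcal P_i$, the nodal action of $\Pi_i=I_h^V$, and the diagonality of $D$ to reduce both sides to the same weighted sum of squared nodal values, and your remark that the bound is an equality when $D_i=\Pi_i'D\Pi_i$ is read literally slightly sharpens the paper's chain, which writes $\le$ because it drops the boundary nodes only on the left.
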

\begin{proof}
    Let $\mathcal N_h^0$ be the set of all support points of $V$ (including boundary support points). From the ownership rule of support points, we have that $\mathcal N_h^0=\dot\bigcup_{i=1}^J \mathcal P_i$.
    Let  $\tilde{w} = (v_1, \dots, v_J) \in \widetilde{W}$. Since $\Pi_i$ is the nodal interpolation operator $I_h^V$ we have that $(\Pi_i v_i)(p_k) = v_i(p_k)$ if $p_k \in \mathcal{P}_i$ and zero otherwise. Then, we have that
    \begin{equation*}
        (\Pi_W \tilde{w})(p_k) = \sum_{i=1}^J (\Pi_i v_i)(p_k) = \sum_{i=1}^J I_h^V v_i(p_k) = \begin{cases}
            v_i(p_k) & \text{if } p_k \in \mathcal{P}_i \setminus \partial \Omega, \\
            0 & \text{otherwise},
        \end{cases}
    \end{equation*}
    since $I_h^Vv_i(p_k) = 0$ for $p_k \in \partial \Omega$.
    Now we can finally show that
    \begin{equation*}
        \norm{\Pi_W \tilde{w}}_D^2 = \sum_{p_k \in \mathcal{N}_h^0} D_{kk} (\Pi_W \tilde{w})(p_k)^2 \leq \sum_{i=1}^J \sum_{p_k \in \mathcal{P}_i} D_{kk} v_i(p_k)^2 = \sum_{i=1}^J \norm{v_i}^2_{D_i} = \norm{\tilde{w}}_{\tilde{D}}^2.
    \end{equation*}
\end{proof}
\noindent It follows that $C_1 = 1$.

\begin{remark}[Partition of unity]
The ownership rule for support points has an important algebraic
consequence. Let $\mathcal N_h^0$ denote the set of all the
support points of $V$ (including boundary support points). By construction,
\[
  \mathcal N_h^0=\dot\bigcup_{i=1}^J \mathcal P_i .
\]
Hence, for every $u\in V$, the function $I_h^{V_i}u$ has the same
nodal values as $u$ on the support points in $\mathcal P_i$, and
zero nodal values on the support points owned by the other agglomerates. Since $u \in V$ it naturally follows that $I_h^{V_i} u \in V$ and the boundary conditions are satisfied.
Therefore, if we set $u_i = I_h^{V_i} u$ then,
\begin{equation}\label{eq:nodal_decomposition}
  \sum_{i=1}^J I_h^{V_i}u = \sum_{i=1}^J u_i = u
  \qquad \forall u\in V .
\end{equation}
Equivalently, the operators $I_h^{V_i}$ provide a partition of the
unconstrained degrees of freedom. In particular, if
$\{\phi_{i,q}^{DG}\}_q$ is a basis of $\mathcal Q^{p'}(B_i)$
satisfying $\sum_q\phi_{i,q}^{DG}=1$ on $B_i$, then the associated
coarse functions
\[
  \phi_{i,q}^C := I_h^{V_i}\phi_{i,q}^{DG}
\]
satisfy
\begin{equation}\label{eq:coarse_partition_unity}
  \sum_{i=1}^J \sum_q \phi_{i,q}^C
  =
  \sum_{i=1}^J I_h^{V_i}1
  =
  \mathbf 1_h = \mathbf 1,
\end{equation}
where $\mathbf 1_h$ denotes the nodal function that is equal to one on
the unconstrained degrees of freedom. Since no boundary
conditions are imposed on the spaces $V_i$, then $\mathbf 1_h$ is the constant function $\mathbf 1$.
\end{remark}

To estimate $C_2$, we first note that for every $u \in V$ we have
\begin{equation}
    u = \Pi_W \tilde{w}
\end{equation} 
where $\tilde{w} = (I_h^{V_1}u, \dots, I_h^{V_J}u)^T$. Then, let us prove that 
\begin{equation}\label{eq:C2}
    \norm{\tilde{w}}^2_{A_w} \lesssim p^{d+4}\frac{H}{h}\Bigl(1 + L^2\frac{H}{h} \Bigr)  \norm{u}^2_A,
\end{equation}
where $C$ does not depend on $h, H, p$ and $L = \max_{i=1,\dots,J} l_i$. In fact, it holds that
\begin{align*}
    \norm{\tilde{w}}^2_{A_w} &= \sum_{i=1}^J (A_i u_i, u_i) = \sum_{i=1}^J (A \Pi_i u_i, \Pi_i u_i) \\ &= \sum_{i=1}^J \norm{u_i}^2_A = \sum_{i=1}^J |u_i|^2_{H^1(\Omega_i)} \\ &\lesssim p^{d+4}\frac{H}{h}\Bigl(1 + L^2\frac{H}{h} \Bigr)|u|_{H^1(\Omega)}^2 = p^{d+4}\frac{H}{h}\Bigl(1 + L^2\frac{H}{h} \Bigr) \norm{u}^2_A,
\end{align*}
where the last inequality follows from the following lemma:
\begin{lemma}\label{lemma:stable_decomposition}
    For any $u \in V$ it holds that
    \begin{equation*}
        \sum_{i=1}^J |I_h^{V_i} u|^2_{H^1(\Omega_i)} \lesssim p^{d+4}\frac{H}{h}\Bigl(1 + L^2\frac{H}{h} \Bigr)|u|_{H^1(\Omega)}^2.
    \end{equation*}
\end{lemma}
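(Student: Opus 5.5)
We split $I_h^{V_i}u$ into its interior and overlap parts, exploit constants to pass from nodal values to $H^1$ seminorms, and close with Lemma~\ref{lemma:trace_inequality} plus finite overlap.

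\textbf{Step 1: interior/overlap splitting.} For each $i$, write
\[
|I_h^{V_i}u|^2_{H^1(\Omega_i)}=|u|^2_{H^1(\Omega_i^\circ)}+|I_h^{V_i}u|^2_{H^1(\Omega_{i,h})}.
\]
This holds because on every $\tau_j\subset\Omega_i^\circ$ all support points of $\tau_j$ belong to $\mathcal P_i$, so $I_h^{V_i}u=u$ there. The interior terms sum to at most a bounded multiple of $|u|^2_{H^1(\Omega)}$, since the $\Omega_i^\circ$ have bounded overlap.

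\textbf{Step 2: reduce each overlap cell to nodal values.} Fix $\tau_j\subset\Omega_{i,h}$. Since $I_h^{V_i}$ reproduces constants on nodes owned by $i$ only, I subtract the agglomerate mean $\bar u_i$ from Lemma~\ref{lemma:trace_inequality}:
\[
I_h^{V_i}u=I_h^{V_i}(u-\bar u_i)+\bar u_i\,I_h^{V_i}1 .
\]
On $\tau_j$, the first term is a combination of at most $(p+1)^d$ GLL basis functions with coefficients $(u-\bar u_i)(p_k)$. Its gradient on $\tau_j$ is controlled by an inverse inequality $|\phi|_{H^1(\tau_j)}\lesssim p^2h^{-1}\norm{\phi}_{L^2(\tau_j)}$ combined with the upper bound $\norm{\phi_{\mathbf j}}^2_{L^2}\lesssim p^{-d}h^d$ from Lemma~\ref{app:shape_fun_p_dep}. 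This gives a factor $p^{4}h^{d-2}\max_k|(u-\bar u_i)(p_k)|^2$ times the number of nodes. I then bound the nodal values by $L^\infty$, and $L^\infty$ on $\tau_j$ by $L^2$ using the lower bound $p^{-2d}$ of Lemma~\ref{app:shape_fun_p_dep}, i.e.\ a polynomial inverse estimate $\norm{v}^2_{L^\infty(\tau_j)}\lesssim p^{2d}h^{-d}\norm{v}^2_{L^2(\tau_j)}$. Trading the count of nodes against one power of $p^d$ is how I would aim to land on $p^{d+4}$.

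\textbf{Step 3: sum over the overlap layer.} Summing over the $O(h)$-thick layer (Assumption~\ref{admissible_hierarchy}, item 2), I convert $\sum_{\tau_j\subset\Omega_{i,h}}\norm{u-\bar u_i}^2_{L^2(\tau_j)}$ into a thin-layer estimate of the form $h\norm{u-\bar u_i}_{L^2(\partial\text{-layer})}^2$-type quantity. By the multiplicative trace and Poincaré inequalities, as in the proof of Lemma~\ref{lemma:trace_inequality}, this is $\lesssim hH|u|^2_{H^1(\Omega_i)}$. Together with the $h^{-2}$ from Step 2, this produces the factor $H/h$.

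\textbf{Step 4: the constant part.} The function $I_h^{V_i}1$ equals $1$ on owned nodes and $0$ on others. Its gradient lives only in the overlap and is $O(p^2/h)$ there. Since $\sum_k I_h^{V_k}1=1$ by \eqref{eq:coarse_partition_unity}, I may replace $\bar u_i$ by $\bar u_i-\bar u_k$ for the neighbouring agglomerates $k$ sharing the overlap cell. The resulting term is bounded by at most $L$ nodal contributions (hence $L^2$ after Cauchy--Schwarz) times $p^{d+4}h^{d-2}\cdot(\text{measure }\sim H^{d-1}h/h^d)$ times $|\bar u_i-\bar u_k|^2$. Lemma~\ref{lemma:trace_inequality}, using $|\Gamma_{ik}|\sim H^{d-1}$ from Assumption~\ref{admissible_hierarchy}, item 3, bounds $|\bar u_i-\bar u_k|^2H^{d-1}\lesssim H(|u|^2_{H^1(\Omega_i)}+|u|^2_{H^1(\Omega_k)})$. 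This yields the $L^2(H/h)^2$ term. Summing over $i$ with bounded overlap gives the statement.

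\textbf{Main obstacle.} The hard part is Step 4: bookkeeping the powers of $p$, $h$ and $L$ so that the constant correction produces exactly $L^2H/h$ relative to the $H/h$ term without losing extra factors. I would first try the crude per-node bound with $L$ counting the nodes, and accept it if it matches the stated exponents.
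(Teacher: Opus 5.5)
You were right to flag Step 4, but the problem there is not bookkeeping: the key move is invalid. After Step 1, you must bound, for each $i$ separately, $|\bar u_i\,I_h^{V_i}1|^2_{H^1(\Omega_{i,h})}=\bar u_i^2\,|I_h^{V_i}1|^2_{H^1(\Omega_i)}$. The identity $\sum_k I_h^{V_k}1=1$ only gives $\sum_k \nabla I_h^{V_k}1=0$. That lets you subtract a common constant inside the \emph{global sum} $\sum_i \bar u_i \nabla I_h^{V_i}1$. It does nothing for a single term, since $\bar u_i \nabla I_h^{V_i}1\neq(\bar u_i-\bar u_k)\nabla I_h^{V_i}1$. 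The left-hand side of the lemma is a sum of squared seminorms of the individual pieces, so there is no cancellation between neighbouring agglomerates, and Lemma~\ref{lemma:trace_inequality} never gets a difference of means to act on. The paper's proof makes the same move. It estimates $|\Pi_0 u|^2_{H^1(\Omega)}$ with $\Pi_0 u=\sum_i \bar u_i I_h^{V_i}1$, which is the seminorm of the sum. But the splitting $I_h^{V_i}u=I_h^{V_i}(u-\bar u_i)+\bar u_i I_h^{V_i}1$ actually requires a bound on $\sum_i \bar u_i^2 |I_h^{V_i}1|^2_{H^1(\Omega_i)}$.

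This cannot be repaired, because the inequality fails with an $h$-independent constant. The cut-off $I_h^{V_i}1$ has gradient of size $h^{-1}$ on a layer of volume $\sim hH^{d-1}$, so the constant part is of order $\sum_i \bar u_i^2 H^{d-1}/h\sim\norm{u}^2_{L^2(\Omega)}/(hH)$. Concretely, take the structured unit square, $p=1$, agglomerates of $4\times4$ cells (so $H/h$ and $L$ are fixed), and $u=I_h(\sin(\pi x)\sin(\pi y))$. On each cell of $\Omega_{i,h}$ touching nodes owned by a neighbour, $I_h^{V_i}u$ drops from about $u$ to $0$ across one cell. So every agglomerate contributes $\gtrsim \bar u_i^2$ up to $O(h)$, and summing over the $\sim h^{-2}$ agglomerates gives a left-hand side $\gtrsim h^{-2}$, while the right-hand side stays bounded. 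Since the $\mathcal P_i$ are disjoint, $I_h^V w_i$ is uniquely determined by $u=\Pi_W\tilde w$, so no other decomposition saves \eqref{eq:stabledec} either. A true statement must either carry an $h$-dependent term, or change the local operators, e.g.\ to Neumann-type ones acting on restrictions $u|_{\Omega_i}$. For the first option with $p=1$, Friedrichs on $\Omega$ gives $\sum_i \bar u_i^2 |I_h^{V_i}1|^2_{H^1(\Omega_i)}\lesssim (hH)^{-1}|u|^2_{H^1(\Omega)}$.

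Apart from this, your Steps 1--3 are a genuinely different and sharper treatment of the fluctuation part than the paper's. The paper works on all of $\Omega_i$ via an $L^\infty$ bound and pays $l_i^2(H/h)^2$; your thin-layer estimate gives $H/h$. Note, however, that counting $(p+1)^d$ nodes per cell as described in Step 2 yields $p^{3d+4}$ rather than $p^{d+4}$. To get $p^{d+4}$ you would need the $p$-uniform equivalence between the GLL-lumped and consistent mass matrices.
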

\begin{proof}
    From (\ref{eq:coarse_partition_unity}) we have that 
    \begin{equation*}
        I_h^{V_i} u = I_h^{V_i}(u - \bar{u}_i) + \bar{u}_i I_h^{V_i}1.
    \end{equation*}
    It holds that 
    \begin{align*}
    |I_h^{V_i} (u - \bar{u}_i)|^2_{H^1(\Omega_i)} &\lesssim \Big| \sum_{j=1}^{l_i} [u(p_j^i) - \bar{u}_i] \phi_j^i \Big|^2_{H^1(\Omega_i)} \lesssim \norm{u - \bar{u}_i}^2_{L^{\infty}(\Omega_i)}l_i\sum_{j=1}^{l_i} |\phi_j^i|^2_{H^1(\Omega_i)} \\ &\lesssim l_i^2\norm{u - \bar{u}_i}^2_{L^{\infty}(\Omega_i)}h^{d-2}p^{4-d} \lesssim l_i^2h^{-d}p^{2d}\norm{u - \bar{u}_i}_{L^2(\Omega_i)}^2 h^{d-2}p^{4-d} \\ &\lesssim  l_i^2H_i^2h^{-d}p^{2d} h^{d-2} p^{4-d} |u|_{H^1(\Omega_i)}^2 \lesssim l_i^2\frac{H_i^2}{h^2}p^{4+d} |u|_{H^1(\Omega)}^2,
\end{align*}
where we have used triangle inequality and Cauchy-Schwarz inequality, inverse inequalities for hp-finite elements (see theorem 3.92 and theorem 4.76 in~\cite{Schwab1998} for $d=1,2$, the proof for $d=3$ is analogous), the fact that by a scaling argument plus inverse inequalities we have $|\phi_j^i|^2_{H^1({\Omega_i})} \lesssim h^{-2}p^4\norm{\phi_j^i}^2_{L^2(\Omega_i)} \lesssim h^{d-2}p^{4-d}$ (see lemma \ref{app:shape_fun_p_dep} for the $p$-dependence) and Poincaré inequality. It follows that 
\begin{equation}\label{lemma:poincare_estimate}
    \sum_{i=1}^J |I_h^{V_i} (u - \bar{u}_i)|^2_{H^1(\Omega_i)} \lesssim L^2\frac{H^2}{h^2}p^{4+d} |u|_{H^1(\Omega)}^2,
\end{equation}
where $L = \max_{i=1,\dots,J} l_i$.
We still need to estimate the term $\Pi_0 u := \sum_{i=1}^J \bar{u}_i I_h^{V_i}1$. To this end, we note that 
\begin{equation} \label{lemma:gradient_property}
    \nabla(\Pi_0 u) = \sum_{i=1}^J (\bar{u}_i - c) \nabla(I_h^{V_i}1)
\end{equation}
for any constant $c$. 
Let $N(x) := \{i : I_h^{V_i}1(x) \neq 0 \}$, it is easy to note that $|N(x)| \leq (p+1)^d$ since that is the maximum number of support points on a mesh element. For any $x \in \Omega$, we have that only indices in $N(x)$ contribute to the sum in $\Pi_0 u$. Let us pick any reference $i_0(x) \in N(x)$. If we use (\ref{lemma:gradient_property}) it holds that 
\begin{equation*}
    \nabla (\Pi_0 u)(x) = \sum_{i \in N(x)} (\bar{u}_i - \bar{u}_{i_0}) \nabla(I_h^{V_i}1)(x).
\end{equation*}
Let $\norm{\cdot}_{l^2}$ denote the classical Euclidean norm on a real-valued vector space. By the Cauchy-Schwarz inequality, we have
\begin{equation*}
    \norm{\nabla (\Pi_0 u)(x)}_{l^2}^2 \lesssim p^d\sum_{i \in N(x)} |\bar{u}_i - \bar{u}_{i_0}|^2 \norm{\nabla(I_h^{V_i}1)(x)}_{l^2}^2.
\end{equation*}
We know by definition that $\nabla(I_h^{V_i}1)$ is supported only on the boundary layer $\Omega_{i,h}$, which has thickness $O(h)$ and a uniformly bounded number of fine-cell layers. Consider the interface $\Gamma_{ij}$ between $\Omega_i$ and $\Omega_j$ and let us define $\Omega_{i,h}^j := \Omega_{i,h} \cap \Omega_j$. It is straightforward to prove that $|\Omega_{i,h}^j| \sim h|\Gamma_{ij}| \sim hH^{d-1}$ and that $\norm{\nabla(I_h^{V_i}1)(x)}_{l^2} \sim h^{-1}p^2$ where we have used hp-finite elements inverse inequalities. Hence, 
\begin{equation*}
    \int_{\Omega_{i,h}^j} \norm{\nabla(I_h^{V_i}1)(x)}_{l^2}^2 dx \lesssim p^4H^{d-1}h^{-1}.
\end{equation*} 
Now, each pair $(i,i_0)$ that shares at least an element contributes to the sum exactly once. Then, by lemma~\ref{lemma:trace_inequality} we have that 
\begin{align*}
    |\Pi_0 u|^2_{H^1(\Omega)} &\lesssim p^d \sum_{(i,i_0)} |\bar{u}_i - \bar{u}_{i_0}|^2 \int_{\Omega_{i,h}^{i_0}} \norm{\nabla(I_h^{V_i}1)(x)}_{l^2}^2 dx \\ &\lesssim p^d \sum_{(i,i_0)} \frac{H_i|u|^2_{H^1(\Omega_i)} + H_{i_0}|u|^2_{H^1(\Omega_{i_0})}}{H^{d-1}}h^{-1}p^4H^{d-1} \lesssim \frac{H}{h}p^{d+4}|u|^2_{H^1(\Omega)}.
\end{align*}
If we combine the last inequality with (\ref{lemma:poincare_estimate}) we get the thesis of this lemma.
\end{proof}
 
It follows that $C_2 = Cp^{d+4}\frac{H}{h}\Bigl(1 + L^2\frac{H}{h} \Bigr)$ where $C$ does not depend on $h, H$ or $p$. We finally prove the following lemma:
\begin{lemma}\label{lemma:scaling}
    For any $v \in V_i$ it holds that
    \begin{equation*}
        \norm{v}^2_{D_i} \lesssim \frac{p^{4+d}}{h^2}\norm{v}^2_{L^2(\Omega_i)}.
    \end{equation*}
\end{lemma}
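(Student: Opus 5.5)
The plan is to work entirely at the level of nodal coefficients. First I expand $\norm{v}^2_{D_i}$ and bound the diagonal entries of $A$. Then I compare the resulting weighted sum of squared nodal values with $\norm{v}^2_{L^2(\Omega_i)}$ through the elementwise mass matrix, using the $p$-dependent basis bounds of Lemma~\ref{app:shape_fun_p_dep}.

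Write $v=\sum_{j=1}^{l_i} v(p_j^i)\phi_j^i$. Since $D_i=\Pi_i' D\Pi_i$ and $\Pi_i=I_h^V$ only zeroes the boundary nodes, we get
\begin{equation*}
\norm{v}^2_{D_i}=\sum_{p_k\in\mathcal P_i\setminus\partial\Omega} D_{kk}\,v(p_k)^2\le \sum_{j=1}^{l_i} |\phi_j^i|^2_{H^1(\Omega)}\,v(p_j^i)^2 .
\end{equation*}
Each diagonal entry is $D_{kk}=|\phi_k|^2_{H^1}$. The support of $\phi_k$ meets a uniformly bounded number of elements, by shape regularity. On each such element I use the estimate already invoked in Lemma~\ref{lemma:stable_decomposition}, namely the scaling argument, the hp inverse inequality and Lemma~\ref{app:shape_fun_p_dep}:
\begin{equation*}
|\phi_k|^2_{H^1(\tau)}\lesssim h^{-2}p^4\norm{\phi_k}^2_{L^2(\tau)}\lesssim h^{d-2}p^{4-d}.
\end{equation*}
This gives $\norm{v}^2_{D_i}\lesssim h^{d-2}p^{4-d}\sum_j v(p_j^i)^2$.

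It remains to show $h^{d}p^{-2d}\sum_j v(p_j^i)^2\lesssim \norm{v}^2_{L^2(\Omega_i)}$, which combined with the above yields the factor $h^{-2}p^{4+d}$. For this I would argue element by element, mapping each $\tau\subset\Omega_i$ to the reference cube $\hat Q$. On $\hat Q$, take a tensor-product GLL polynomial $\hat w$ whose nodal values are those of $v$ at the nodes owned by $\mathcal P_i$ in $\tau$ and zero elsewhere. Two facts about GLL interpolation then give
\begin{equation*}
\norm{\hat w}^2_{L^2(\hat Q)}\gtrsim \sum_{\mathbf j}w_{\mathbf j}\,\hat w(\xi_{\mathbf j})^2\ge \min_{\mathbf j}w_{\mathbf j}\sum_{\mathbf j}\hat w(\xi_{\mathbf j})^2 .
\end{equation*}
The first inequality is the uniform equivalence of the discrete GLL norm and the $L^2$ norm on $\mathcal Q^p$. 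The second follows because $\min_{\mathbf j} w_{\mathbf j}\gtrsim p^{-2d}$, by the weight bounds used in Lemma~\ref{app:shape_fun_p_dep}. Scaling back introduces $h^d$. Summing over the boundedly overlapping elements, where each node is counted at most $(p+1)^d$ times but only through elements containing it, hence a bounded number of times, gives the claim.

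The main obstacle is the boundary layer $\Omega_{i,h}$. There, $v$ restricted to an element is not $v$'s full elementwise interpolant: the non-owned nodes are zero, so the function on $\tau$ is exactly $\hat w$ above. Hence the elementwise lower bound is applied to $v|_\tau$ itself, and that is consistent. What needs care is that $\norm{v}^2_{L^2(\Omega_i)}$ really dominates $\sum_\tau\norm{v}^2_{L^2(\tau)}$ over all elements containing owned nodes. This holds because every such element lies in $\Omega_i$ by the definition of $\Omega_i$. If the discrete-norm equivalence with uniform constants proves delicate in $d$ dimensions, I would fall back on the tensor-product structure and the one-dimensional identity $\norm{\psi_j}^2=\frac{2p}{2p+1}w_j$ together with exactness of GLL quadrature to degree $2p-1$. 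These give the one-dimensional equivalence with constants in $[\frac12,1]$-type ranges, which then tensorizes.
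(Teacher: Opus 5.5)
Your proposal is correct and follows essentially the paper's argument: you bound the diagonal entries by $|\phi_j^i|^2_{H^1}\lesssim h^{d-2}p^{4-d}$ using the hp inverse inequality and the upper bound of Lemma~\ref{app:shape_fun_p_dep}, and you obtain $\sum_j v(p_j^i)^2\lesssim h^{-d}p^{2d}\norm{v}^2_{L^2(\Omega_i)}$ from the GLL discrete-norm (lumped mass) equivalence with minimal weight $\sim p^{-2d}$. Your elementwise formulation is the local version of the paper's spectral equivalence between $M_i$ and $\widetilde{M}_i$, and it handles the boundary-layer elements more explicitly; the node-multiplicity remark is unnecessary, since the elements are disjoint and each owned node is counted at least once, which is the favorable direction.
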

\begin{proof}
    For any $v \in V_i$  such that $v = \sum_{j=1}^{l_i}v_j^i\phi_j^i$ where $\tilde{v} = (v_1^i, \dots, v_{l_i}^i)^T$, we have that 
\begin{align*}
    \norm{v}^2_{D_i} = (D_iv, v) &= \sum_{j=1}^{l_i}  {v_j^i}^2 |\phi_j^i|_{H^1(\Omega_i)}^2 \lesssim h^{-2}p^4\sum_{j=1}^{l_i}  {v_j^i}^2 \norm{\phi_j^i}_{L^2(\Omega_i)}^2 \lesssim h^{d-2}p^{4-d} \norm{\tilde{v}}_{l^2}^2 \\ &\lesssim h^{-2}p^{4+d}\norm{v}_{L^2(\Omega_i)}^2,
\end{align*}
where we have used inverse inequalities of hp-finite element analysis (see, for instance, lemma B.27 and B.28 in~\cite{toselli::DD}) and the fact that 
\begin{equation}\label{lemma_scaling_inequality}
    \norm{\tilde{v}}_{l^2}^2 \lesssim h^{-d}p^{2d}\norm{v}_{L^2(\Omega_i)}^2.
\end{equation} 
Indeed, let $M_i$ be the local mass matrix $(M_i)_{jk} = \int_{\Omega_i} \phi_j^i(x) \phi_k^i(x) dx$. We know that there is spectral equivalence between the mass matrix $M$ and the GLL-lumped diagonal mass matrix $\widetilde{M}$ (see e.g.~\cite{Canuto2006}). The equivalence extends to the local mass matrices $M_i$ and $\widetilde{M}_i$ as well. Hence, we have that
\begin{equation*}
    \norm{v}_{L^2(\Omega_i)}^2 = \tilde{v}^T M_i \tilde{v} \gtrsim \lambda_{min}(\widetilde{M}_i) \norm{\tilde{v}}_{l^2}^2 \gtrsim h^dp^{-2d}\norm{\tilde{v}}_{l^2}^2,
\end{equation*}
where we used the fact that $\widetilde{M}_i$ is diagonal and that $\norm{\phi_j^i}_{L^2(\Omega_i)}^2 \gtrsim h^dp^{-2d}$ from lemma~\ref{app:shape_fun_p_dep}.
\end{proof}

\subsection{The Nicolaides coarse space in a multigrid setting}\label{nicolaides_coarse_space}
To estimate $\mu_c$, let us define $Q_i^{\circ}$ as the local $L^2$ orthogonal projection on $V_i^{\circ c}$ where
\begin{equation*}
    V_i^{\circ c} := \{v_i^c|_{\Omega_i^{\circ}} : v_i^c \in V_i^c \}. 
\end{equation*} 
Consequently, let us prove the following theorem: 
\begin{theorem}\label{theo:nicolaides}
    Under the assumptions previously stated, if the fine-element geometry maps from the reference element to the physical space elements are multilinear and $p' = 0$, it holds that 
    \begin{equation*}
        \norm{v_i - Q_i v_i}^2_{D_i} \lesssim p^{4+d}\frac{H_i^2}{h^2}\Bigl(2 + l_i^2p^{d}\Bigr) |v_i|_{H^1(\Omega_i)}^2,
    \end{equation*}
    i.e.
    \begin{equation*}
        \mu_i^{-1}(V_i^c) \lesssim p^{4+d}\frac{H_i^2}{h^2}\Bigl(2 + l_i^2p^{d} \Bigr).
    \end{equation*}
\end{theorem}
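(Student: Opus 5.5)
Since $Q_i$ is the $D_i$-orthogonal projection onto $V_i^c$, I will not work with $Q_i$ directly. Instead I pick one convenient competitor $v_i^c \in V_i^c$ and use
\[
\norm{v_i - Q_iv_i}_{D_i}^2 \le \norm{v_i - v_i^c}_{D_i}^2 .
\]
For $p'=0$, $V_i^c$ is spanned by $I_h^{V_i}1$, the nodal function equal to one on $\mathcal P_i$. The competitor is $v_i^c = c_i\, I_h^{V_i}1$, with $c_i = Q_i^\circ v_i$ the $L^2$ average of $v_i$ over the interior region $\Omega_i^\circ$. Then $v_i - v_i^c = I_h^{V_i}(v_i - c_i)$. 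Applying Lemma~\ref{lemma:scaling} gives
\[
\norm{v_i - v_i^c}_{D_i}^2 \lesssim \frac{p^{4+d}}{h^2}\,\norm{I_h^{V_i}(v_i-c_i)}_{L^2(\Omega_i)}^2 .
\]
It therefore suffices to show
\[
\norm{I_h^{V_i}(v_i-c_i)}_{L^2(\Omega_i)}^2 \lesssim H_i^2\,(2 + l_i^2p^d)\,|v_i|^2_{H^1(\Omega_i)} .
\]

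I split $\Omega_i = \Omega_i^\circ \cup \Omega_{i,h}$ and treat the two parts separately.

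\textbf{Interior part.} On every cell of $\Omega_i^\circ$ all support points belong to $\mathcal P_i$, so $I_h^{V_i}(v_i-c_i) = v_i - c_i$ there. The Poincaré inequality on $\Omega_i^\circ$ (Assumptions~\ref{admissible_hierarchy}, item 1) gives
\[
\norm{v_i-c_i}^2_{L^2(\Omega_i^\circ)} \lesssim H_i^2 |v_i|^2_{H^1(\Omega_i^\circ)} .
\]
The multilinearity of the geometry maps keeps the mapped $\mathcal Q^p$ spaces well behaved, so polynomial approximation and Poincaré constants do not degenerate. This term, together with a Poincaré bound on all of $\Omega_i$ used below, produces the constant "$2$".

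\textbf{Boundary layer.} On $\Omega_{i,h}$ I expand $I_h^{V_i}(v_i-c_i) = \sum_j (v_i(p_j^i)-c_i)\phi_j^i$ restricted to the owned nodes. By the triangle and Cauchy--Schwarz inequalities, with $\norm{\phi_j^i}_{L^2}^2\lesssim h^dp^{-d}$ from Lemma~\ref{app:shape_fun_p_dep},
\[
\norm{I_h^{V_i}(v_i-c_i)}^2_{L^2(\Omega_{i,h})} \lesssim l_i\, h^d p^{-d} \sum_j |v_i(p_j^i)-c_i|^2 \le l_i^2 h^d p^{-d}\norm{v_i-c_i}^2_{L^\infty(\Omega_i)} .
\]
Next I bound the $L^\infty$ norm by the $L^2$ norm using the $hp$ inverse estimate already used in Lemma~\ref{lemma:stable_decomposition}, $\norm{w}^2_{L^\infty}\lesssim h^{-d}p^{2d}\norm{w}^2_{L^2}$. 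The result is
\[
\norm{I_h^{V_i}(v_i-c_i)}^2_{L^2(\Omega_{i,h})} \lesssim l_i^2 p^{d}\,\norm{v_i-c_i}^2_{L^2(\Omega_i)} .
\]

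\textbf{Main obstacle.} What remains is to control $\norm{v_i - c_i}_{L^2(\Omega_i)}$ on all of $\Omega_i$, even though $c_i$ is the average over $\Omega_i^\circ$ only. I will write
\[
v_i - c_i = (v_i-\bar v_i) + (\bar v_i - c_i),
\]
where $\bar v_i$ is the average over $\Omega_i$. Poincaré on $\Omega_i$ bounds the first piece. For the second, $\bar v_i - c_i$ is the average of $v_i - \bar v_i$ over $\Omega_i^\circ$, so
\[
|\bar v_i - c_i|^2\,|\Omega_i| \lesssim \frac{|\Omega_i|}{|\Omega_i^\circ|}\,\norm{v_i-\bar v_i}^2_{L^2(\Omega_i)} \lesssim H_i^2|v_i|^2_{H^1(\Omega_i)} .
\]
This uses the nondegeneracy assumption $|\Omega_i^\circ|\sim|\Omega_i|$ and a Poincaré inequality on $\Omega_i$, which I expect to follow from the connectedness of $\Omega_i^\circ$ together with the $O(h)$ boundary layer (I would justify it in the same way as the multiplicative trace argument in Lemma~\ref{lemma:trace_inequality}). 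Summing the interior and boundary-layer contributions and multiplying by the factor $p^{4+d}h^{-2}$ gives the stated bound, and taking the maximum over $v_i$ gives the bound on $\mu_i^{-1}(V_i^c)$.
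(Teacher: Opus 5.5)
Your proof is correct within the paper's framework, and most of it matches the paper's proof. You use the same competitor $I_h^{V_i}(Q_i^\circ v_i)=c_i\,I_h^{V_i}1$, the same reduction via Lemma~\ref{lemma:scaling}, the same interior Poincaré step on $\Omega_i^\circ$, and the same inverse-estimate chain in the layer. The difference is how you close the boundary-layer term.

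The paper splits that term into $\norm{v_i}_{L^2(\Omega_{i,h})}$ and $\norm{c_i\,I_h^{V_i}1}_{L^2(\Omega_{i,h})}$. It reduces the second to $l_i^2p^d\norm{v_i}^2_{L^2(\Omega_i)}$ using $L^2$-stability of $Q_i^\circ$. It then finishes with Friedrichs' inequality $\norm{v_i}_{L^2(\Omega_i)}\lesssim H_i|v_i|_{H^1(\Omega_i)}$. This holds because every $v_i\in V_i$ vanishes on $\partial\Omega_i\setminus\partial\Omega$: all cells touching $\mathcal P_i$ belong to $\Omega_i$, so no node of $\mathcal P_i$ lies on the closure of a face of $\partial\Omega_i$ interior to $\Omega$. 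That fact removes your ``main obstacle'' at once, since nondegeneracy gives $\norm{v_i-c_i}_{L^2(\Omega_i)}\lesssim\norm{v_i}_{L^2(\Omega_i)}$. It also needs Poincaré only on $\Omega_i^\circ$, which is what Assumptions~\ref{admissible_hierarchy} actually state.

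Your route instead needs a mean-value Poincaré inequality on all of $\Omega_i$. The paper uses one tacitly in Lemmas~\ref{lemma:trace_inequality} and~\ref{lemma:stable_decomposition}, but it is not among the assumptions. Deriving it ``as in Lemma~\ref{lemma:trace_inequality}'' would be circular, because that lemma itself invokes Poincaré on $\Omega_i$. A sound justification is a chain-of-cells argument based on item~2 of the assumptions. For face-adjacent shape-regular cells, $\norm{w}_{L^2(\tau')}\lesssim\norm{w}_{L^2(\tau)}+h|w|_{H^1(\tau\cup\tau')}$. A bounded number of layers then gives $\norm{w}^2_{L^2(\Omega_{i,h})}\lesssim\norm{w}^2_{L^2(\Omega_i^\circ)}+h^2|w|^2_{H^1(\Omega_i)}$ for $w=v_i-c_i$, and this is $p$-independent.

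What your version buys is independence from the zero trace of $v_i$. It genuinely uses the coarse constant to remove the local near-kernel, so it would survive for local spaces without that trace property. In the paper's argument, by contrast, Friedrichs' inequality plus Lemma~\ref{lemma:scaling} already yields the stated bound for the trivial competitor $v_i^c=0$. So the $p'=0$ coarse function is never really exercised there.
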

\begin{proof}
From lemma~\ref{lemma:scaling}, and the definition of $Q_i$, we have that 
    \begin{equation*}
    \norm{v_i - Q_iv_i}_{D_i}^2 \leq  \norm{v_i - I_h^{V_i}(Q_i^{\circ}v_i)}_{D_i}^2 \lesssim h^{-2}p^{4+d} \norm{v_i - I_h^{V_i}(Q_i^{\circ}v_i)}_{L^2(\Omega_i)}^2.
    \end{equation*}
Now we split the term $\norm{v_i - I_h^{V_i}(Q_i^{\circ}v_i)}_{L^2(\Omega_i)}^2$:
\begin{equation*}
    \norm{v_i - I_h^{V_i}(Q_i^{\circ}v_i)}_{L^2(\Omega_i)}^2 = \norm{v_i - I_h^{V_i}(Q_i^{\circ}v_i)}_{L^2(\Omega_i^{\circ})}^2 + \norm{v_i - I_h^{V_i}(Q_i^{\circ}v_i)}_{L^2(\Omega_{i,h})}^2.
\end{equation*}
From the fact that $Q_i^{\circ}$ reproduces constant functions in $\Omega_i^{\circ}$ we get from the Poincaré inequality that
\begin{equation*}
    \norm{v_i - I_h^{V_i}(Q_i^{\circ}v_i)}_{L^2(\Omega_i^{\circ})}^2 \lesssim H_i^2 |v_i|^2_{H^1(\Omega_i^{\circ})} \lesssim H_i^2 |v_i|^2_{H^1(\Omega_i)}.
\end{equation*}
For the second term, we split again
\begin{equation*}
    \norm{v_i - I_h^{V_i}(Q_i^{\circ}v_i)}_{L^2(\Omega_{i,h})}^2 \leq \norm{v_i}_{L^2(\Omega_{i,h})}^2 + \norm{I_h^{V_i}(Q_i^{\circ}v_i)}_{L^2(\Omega_{i,h})}^2 .
\end{equation*}
From Friedrichs' inequality (see, for example,~\cite[Corollary A.15]{toselli::DD}), we have that 
\begin{equation*}
    \norm{v_i}_{L^2(\Omega_{i,h})}^2 \leq \norm{v_i}_{L^2(\Omega_{i})}^2 \lesssim H_i^2|v_i|_{H^1(\Omega_{i})}^2,
\end{equation*}
since $v_i = 0$ on $\partial\Omega_i$ or at least on a subset of $\partial\Omega_i$ with positive measure when $\Omega_i \cap \partial \Omega \neq \emptyset$. This is guaranteed by the admissible hierarchy assumptions on the overlap.
For the second term, we notice that 
    \begin{align*}
        \norm{ I_h^{V_i}(Q_i^{\circ}v_i)}_{L^2(\Omega_{i,h})}^2 &\leq \norm{ I_h^{V_i}(Q_i^{\circ}v_i)}_{L^2(\Omega_{i})}^2 \lesssim l_i^2\norm{Q_i^{\circ}v_i}_{L^{\infty}(\Omega_i^{\circ})}^2h^dp^{-d} \\ &\lesssim l_i^2h^{-d}p^{2d}\norm{Q_i^{\circ}v_i}_{L^2(\Omega_i^{\circ})}^2 h^dp^{-d} \lesssim l_i^2p^{d}\norm{v_i}_{L^2({\Omega_i})}^2 \\ &\lesssim l_i^2p^{d}H_i^2|v_i|_{H^1(\Omega_i)}^2,
    \end{align*}
    where we have used Friedrichs' inequality, hp-FEM inverse estimates, and the fact that $Q_i^{\circ}$ is an $L^2$ orthogonal projection. By combining the last four inequalities, we obtain the thesis.

\end{proof}

From theorem~\ref{xu:core_theo} and theorem~\ref{theo:nicolaides} it follows that
\begin{equation*}
    \norm{E}^2_A \leq 1 -  \frac{C}{p^{2d+8}\frac{H^3}{h^3}(2 + L^2p^d)(1 + L^2\frac{H}{h})c^D},
\end{equation*}
where $C$ does not depend on $p, h$ or $H$. In fact we recall that $L = \max_{i=1,\dots,J} l_i$. The $R^*$-tree properties imply that
\begin{itemize}
    \item if we use the cell-based agglomeration strategy then, for every $i = 1, \dots, J$, it holds that $l_i \lesssim p^dM$, i.e. $L \lesssim p^dM$
    \item if we use the point-based agglomeration strategy then, for every $i = 1, \dots, J$, it holds that $l_i \lesssim M$, i.e. $L \lesssim M$.
\end{itemize}

Even though this estimate is not sharp, we can still observe that if the ratio $H/h$ is constant and $p$ is fixed, then the multigrid algorithm converges uniformly with respect to the mesh size. We remark that if only the ratio $H/h$ is fixed the bound deteriorates as $p$ increases and uniform convergence cannot hold.
 
\subsection{Generalization to higher order polynomials}\label{high_order_generalization}

Now, let $p \geq 1$ and $p' \geq 1$. Since (\ref{eq:C1}) and (\ref{eq:C2}) still hold in the general setting, we only need to estimate $\mu_c$. To this end, we prove the following:
\begin{theorem} \label{theo:core}
Under the assumptions previously stated, if the fine-element geometry maps from the reference element to the physical space elements are multilinear and $p' \geq 1$ then,
    \begin{equation*}
        \norm{v_i - Q_i v_i}^2_{D_i} \lesssim p^{4+d}\frac{H_i^2}{h^2}\Bigl(1 + l_i^2p^{d} + {p'}^{-2} \Bigr) |v_i|_{H^1(\Omega_i)}^2,
    \end{equation*}
    i.e.
    \begin{equation*}
        \mu_i^{-1}(V_i^c) \lesssim p^{4+d}\frac{H_i^2}{h^2}\Bigl(1 + l_i^2p^{d} + {p'}^{-2} \Bigr).
    \end{equation*}
\end{theorem}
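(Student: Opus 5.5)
The proof will follow the same template as Theorem~\ref{theo:nicolaides}; the only change is in the interior approximation term, where the degree-$p'$ tensor-product space replaces constants.

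\emph{Reduction to an $L^2$ estimate.} As before, let $Q_i^{\circ}$ be the $L^2(\Omega_i^\circ)$-orthogonal projection onto $V_i^{\circ c}$. Since $Q_i$ is the $D_i$-orthogonal projection onto $V_i^c$, we may bound $\norm{v_i - Q_iv_i}_{D_i}$ by the error of any competitor in $V_i^c$. We choose $I_h^{V_i}(\tilde Q_i v_i)$, where $\tilde Q_i v_i\in\mathcal Q^{p'}(B_i)$ is the box polynomial whose restriction to $\Omega_i^\circ$ equals $Q_i^\circ v_i$. Lemma~\ref{lemma:scaling} then gives
\begin{equation*}
\norm{v_i - Q_iv_i}_{D_i}^2\lesssim h^{-2}p^{4+d}\norm{v_i - I_h^{V_i}(\tilde Q_i v_i)}_{L^2(\Omega_i)}^2 .
\end{equation*}
We split the $L^2$ norm into its parts on $\Omega_i^\circ$ and on $\Omega_{i,h}$.

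\emph{Interior term.} The key step is a $p'$-version approximation estimate on $\Omega_i^\circ$. By Assumptions~\ref{admissible_hierarchy} (nondegenerate interior, bounded box aspect ratio, polynomial approximation estimates uniform up to that bound), the $L^2$ projection onto $\mathcal Q^{p'}$ satisfies
\begin{equation*}
\norm{v_i - Q_i^\circ v_i}_{L^2(\Omega_i^\circ)}^2\lesssim \frac{H_i^2}{{p'}^2}|v_i|^2_{H^1(\Omega_i^\circ)} .
\end{equation*}
This is the standard $hp$ estimate: map $B_i$ to the reference cube and use the tensor-product Legendre projection bound $\norm{v-\Pi_{p'}v}\lesssim {p'}^{-1}|v|_{H^1}$.

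On the interior cells all support points are owned by $\mathcal P_i$, so $I_h^{V_i}$ coincides there with the fine nodal interpolant $I_h$. We must then handle $I_h(\tilde Q_iv_i)-\tilde Q_iv_i$ on $\Omega_i^\circ$. Because $p'\le p$ and the geometry maps are multilinear, $\mathcal Q^{p'}$ pulled back to each cell lies in $\mathcal Q^p$. Hence $I_h$ reproduces $\tilde Q_iv_i$ exactly on $\Omega_i^\circ$, and the interior term is bounded by $H_i^2{p'}^{-2}|v_i|^2_{H^1(\Omega_i)}$. The multilinearity hypothesis is used precisely here.

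\emph{Boundary layer term and the main obstacle.} On $\Omega_{i,h}$ we bound, as before, $\norm{v_i}_{L^2(\Omega_{i,h})}^2\lesssim H_i^2|v_i|^2_{H^1(\Omega_i)}$ by Friedrichs' inequality. The remaining term is $\norm{I_h^{V_i}(\tilde Q_iv_i)}^2_{L^2(\Omega_i)}$, which is the main obstacle. Following the same chain as before gives $\lesssim l_i^2 h^dp^{-d}\norm{\tilde Q_iv_i}^2_{L^\infty(B_i)}$. We then need
\begin{equation*}
\norm{\tilde Q_iv_i}^2_{L^\infty(B_i)}\lesssim h^{-d}p^{2d}\norm{Q_i^\circ v_i}^2_{L^2(\Omega_i^\circ)} .
\end{equation*}
This follows from three facts: the polynomial inverse estimate on the fine cells inside $\Omega_i^\circ$ (valid since $p'\le p$), the uniform interpolation stability of Assumption~\ref{admissible_hierarchy}(4), and the nondegeneracy of $\Omega_i^\circ$ in $B_i$, which lets us extend the bound from $\Omega_i^\circ$ to $B_i$ with constants independent of $p'$. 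This extension step is the delicate point. I would justify it via the assumed uniform stability rather than a Remez-type inequality, since a Remez bound would introduce an exponential dependence on $p'$.

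With that bound, $L^2$-stability of $Q_i^\circ$ and Friedrichs' inequality give $l_i^2p^dH_i^2|v_i|_{H^1(\Omega_i)}^2$. Summing the three contributions and multiplying by $h^{-2}p^{4+d}$ gives
\begin{equation*}
p^{4+d}\frac{H_i^2}{h^2}\bigl(1+l_i^2p^d+{p'}^{-2}\bigr)|v_i|^2_{H^1(\Omega_i)},
\end{equation*}
and taking the maximum over $v_i$ in \eqref{eq:condcoarse} yields the bound on $\mu_i^{-1}$.
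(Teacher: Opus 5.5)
Your template matches the paper's: a competitor in $V_i^c$, Lemma~\ref{lemma:scaling}, the split into $\Omega_i^\circ$ and $\Omega_{i,h}$, Friedrichs on the layer, and the $l_i^2p^d$ chain for the interpolated projection. However, the step you single out as the one place where multilinearity is used is false as stated. A multilinear cell map does \emph{not} send $\mathcal{Q}^{p'}$ into $\mathcal{Q}^{p}$ when $p'\le p$. Each physical coordinate $x_k=F_{\tau,k}(\hat x)$ is multilinear, so a monomial $x_1^{a_1}\cdots x_d^{a_d}$ with all $a_k\le p'$ pulls back to a product of $a_1+\dots+a_d$ multilinear factors. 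In general this has degree up to $dp'$ in each reference variable. A counterexample already exists for $p=p'=1$ with the affine (hence multilinear) map $F_\tau(\hat x)=\tfrac{1}{\sqrt2}(\hat x_1-\hat x_2,\hat x_1+\hat x_2)$. There $x_1x_2\in\mathcal{Q}^1(B_i)$ pulls back to $(\hat x_1^2-\hat x_2^2)/2$, which the bilinear nodal interpolant does not reproduce. So in general $I_h(\tilde Q_iv_i)\ne\tilde Q_iv_i$ on $\Omega_i^\circ$, and the term $\tilde Q_iv_i-I_h\tilde Q_iv_i$ is left unbounded. Your description of $Q_i^\circ v_i$ as the restriction of a box polynomial, i.e.\ $V_i^{\circ c}=\mathcal{Q}^{p'}(B_i)|_{\Omega_i^\circ}$, rests on the same claim. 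In fact $V_i^{\circ c}$ consists of restrictions of the \emph{interpolants} $I_h^{V_i}q$, not of the polynomials $q$.

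The repair is the one the paper uses, and it is why the paper speaks of polynomials of \emph{total} degree $p'$. If $|\alpha|\le p'$, then $x^\alpha\circ F_\tau$ is a product of $|\alpha|$ multilinear factors, so it has degree at most $|\alpha|\le p'$ in each reference variable; hence $\mathcal{P}^{p'}\circ F_\tau\subset\mathcal{Q}^{p'}\subset\mathcal{Q}^{p}$. Since $\mathcal{P}^{p'}\subset\mathcal{Q}^{p'}(B_i)$, take the competitor $I_h^{V_i}(\pi v_i)\in V_i^c$, where $\pi$ is the $L^2(\Omega_i^\circ)$-projection onto $\mathcal{P}^{p'}$, extended as a polynomial to $B_i$. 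This competitor coincides with $\pi v_i$ on every interior cell. The $hp$ bound $\norm{v_i-\pi v_i}^2_{L^2(\Omega_i^\circ)}\lesssim H_i^2{p'}^{-2}|v_i|^2_{H^1(\Omega_i)}$ still holds for total degree, e.g.\ because $\mathcal{Q}^{\lfloor p'/d\rfloor}\subset\mathcal{P}^{p'}$ and $\lfloor p'/d\rfloor+1>p'/d$. With this change the rest of your argument goes through as written.

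The extension step you flag as delicate is controlling the box polynomial at owned support points outside $\Omega_i^\circ$ uniformly in $p'$. The paper does not address it: it only bounds $\norm{Q_i^\circ v_i}_{L^\infty(\Omega_i^\circ)}$, citing ``the same arguments'' as for $p'=0$. Assumptions~\ref{admissible_hierarchy}(4) concerns interpolation stability, not this extension, so you are invoking an extra hypothesis there. In this respect, though, you are no worse off than the paper.
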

\begin{proof}
    From lemma~\ref{lemma:scaling}, and the definition of $Q_i$, we have that 
    \begin{equation*}
    \norm{v_i - Q_iv_i}_{D_i}^2 \leq  \norm{v_i - I_h^{V_i}(Q_i^{\circ}v_i)}_{D_i}^2 \lesssim h^{-2}p^{4+d} \norm{v_i - I_h^{V_i}(Q_i^{\circ}v_i)}_{L^2(\Omega_i)}^2.
    \end{equation*}
    Now, if we split the term $\norm{v_i - I_h^{V_i}(Q_i^{\circ}v_i)}_{L^2(\Omega_i)}^2$ as we did in the proof of theorem~\ref{theo:nicolaides}, we obtain the following
    \begin{equation*}
        \norm{v_i -  I_h^{V_i}(Q_i^{\circ}v_i)}_{L^2(\Omega_i^{\circ})}^2 \lesssim \frac{H_i^2}{{p'}^2} |v_i|^2_{H^1(\Omega_i^{\circ})} \lesssim \frac{H_i^2}{{p'}^2} |v_i|^2_{H^1(\Omega_i)},
    \end{equation*}
    where we have used the Bramble-Hilbert lemma, classical Aubin-Nitsche duality argument, and estimates from hp-finite element analysis (see theorem 4.1 in~\cite{hp::review}, but also~\cite{Schwab1998,BABUSKA19905,hp:quasi_uniform}). Indeed, $ I_h^{V_i}(Q_i^{\circ})$ preserves polynomials up to total degree $p'$ in $\Omega_i^{\circ}$ since the space of polynomials on $\Omega_i^{\circ}$ of total degree $p'$ is a subset of $V_i^{\circ c}$.
    If we also split the term in the overlap $\Omega_{i,h}$ like we did in theorem~\ref{theo:nicolaides}, we obtain 
   \begin{equation*}
    \norm{v_i}_{L^2(\Omega_{i,h})}^2 \lesssim H_i^2|v_i|_{H^1(\Omega_{i})}^2 .
    \end{equation*}
    By repeating the same arguments as in the proof of theorem~\ref{theo:nicolaides}, we also have that 
    \begin{equation*}
        \norm{ I_h^{V_i}(Q_i^{\circ}v_i)}_{L^2(\Omega_{i,h})}^2 \lesssim l_i^2p^{d}H_i^2|v_i|_{H^1(\Omega_i)}^2,
    \end{equation*}
     By combining the last three inequalities, we obtain the thesis.
\end{proof}
From theorem~\ref{xu:core_theo} and theorem~\ref{theo:core} it follows that
\begin{equation*}
    \norm{E}^2_A \leq 1 -  \frac{C}{p^{2d+8}\frac{H^3}{h^3}(1 + {p'}^{-2} + L^2p^d)(1 + L^2\frac{H}{h})c^D},
\end{equation*}
where $C$ does not depend on $p,p',h$ and $H$. The estimates on $L$ are the same as in the previous section.
Again our estimate is not sharp but we still observe that if the ratio $H/h$ is constant and $p$ is fixed, then the multigrid algorithm converges uniformly with respect to the mesh size. Even though the estimate deteriorates as $p$ increases and uniform convergence cannot hold unless $p$ is fixed, we remark that the estimate improves as $p'$ increases.

\subsection{The simplicial elements case}
In the simplicial case (i.e. $\mathcal{R}^p = \mathcal{P}^p$) lemma~\ref{app:shape_fun_p_dep} does not hold. It is well known that tensor-product GLL nodes are not available for simplices. Uniformly spaced nodes are ill-conditioned~\cite{BOS198343}, a possible solution would be to employ Fekete points~\cite{Fekete1923, Fekete2000} on the reference simplex as nodes for the Lagrange basis functions. In fact, in~\cite{Fekete2000} it is shown that, given a reference simplex $\hat{T}$, the Lagrange basis functions $\phi_{\mathbf{j}}$ associated with Fekete points satisfy
\begin{equation*}
    \norm{\phi_{\mathbf{j}}}^2_{L^{\infty}(\hat{T})} \leq 1,
\end{equation*}
i.e.
\begin{equation*}
    \norm{\phi_{\mathbf{j}}}^2_{L^{2}(\hat{T})} \lesssim 1,
\end{equation*}
since we are considering a reference element. The main issue is that when we use Fekete points the $p$-dependence in the proof of lemma~\ref{lemma:scaling} is not necessarily true anymore: only the $h$-dependence still holds.
If we adapt the proof of lemma~\ref{lemma:stable_decomposition} and theorem~\ref{theo:core}, by changing the estimate in lemma~\ref{app:shape_fun_p_dep} with the estimate for simplicial elements, we can only state that the two-level error propagation operator satisfies the following estimate:
\begin{equation*}
    \norm{E}^2_A \leq 1 -  \frac{C(p)}{p^{d+4}\frac{H^3}{h^3}(1 + {p'}^{-2} + L^2p^{2d})(1 + L^2p^{d}\frac{H}{h})c^D},
\end{equation*} 
where $C(p)$ does not depend on $H,h,p'$ but it does depend on $p$ and the dependence is tied to the $p$-dependence of the constant in lemma~\ref{lemma:scaling}. This estimate is strictly worse than the one we obtained for hypercubes. Nevertheless, it still confirms the qualitative behavior of the algorithm: if the ratio $H/h$ is constant and $p$ is fixed, then the multigrid algorithm converges uniformly with respect to the mesh size. In any case, the bound deteriorates as $p$ increases and uniform convergence cannot hold unless $p$ is fixed. We remark that the estimate still improves as $p'$ increases.

\begin{remark}
   In our numerical experiments we use uniformly spaced nodes on the simplex, as they are the only kind of nodes available for simplices in the~\textsc{deal.II}~\cite{dealII95,dealIIdesign} library. Even though uniformly spaced nodes are ill-conditioned, we never consider a polynomial degree $p > 3$ in the experiments because of our computational-resource limitations. It is known that for $p=1,2$ uniformly spaced nodes are Fekete points, while for $p \geq 3$ Fekete points are not uniformly spaced~\cite{BOS198343}. Fekete points on simplices behave similarly to GLL nodes on hypercubes: GLL nodes coincide with Fekete points on hypercubes~\cite{Fekete_is_GLL}.
\end{remark}

\section{Numerical experiments}\label{sec:numerics}

In this section, we investigate the capabilities of the preconditioner introduced in Section~\ref{sec:preconditioner}
in different scenarios by varying the spatial dimension, geometry, and polynomial degrees. We solve the model problem defined in Section~\ref{sec:model_problem}, where $f$ is a manufactured forcing term such that the analytical solution is $u = \prod_{i=1}^d\sin(\pi x_i)$, with $d=2,3$.

As with any multilevel method, our multigrid preconditioner has several key components:
a hierarchy of meshes, intergrid transfer operators between levels, a smoother, a sequence of operators $\{A_l\}_{l=0}^{L-1}$, and a coarse-grid solver.
We choose a degree-$3$ Chebyshev-accelerated Jacobi smoother~\cite{ADAMS2003593}. On each level, it uses the precomputed inverse of the matrix diagonal and applications of the level operator to compute the residuals in the Jacobi-type iteration. Its parameters are determined from eigenvalue estimates obtained through Lanczos iterations. In our simulations, we use $2$ pre- and $2$ post-smoothing steps. For the coarse-grid solver, we use the sparse direct solver \textsc{MUMPS}~\cite{mumps}.

We compare our multilevel approach with the AMG implementation in the \textsc{Trilinos} ML package~\cite{Trilinos}. The preconditioned conjugate gradient solver applies one V-cycle of either the agglomerated multigrid or the algebraic multigrid preconditioner. We use the multilevel methods as preconditioners, since it is well known that multigrid
methods are more robust when used as preconditioners rather than solvers~\cite{HariStadlerBirosHighOrderMG}. The AMG preconditioner parameters, reported in Table~\ref{tab:amg_params}, are set according
to best practices.

\begin{table}[h]
    \centering
    \begin{tabular}{ll}
        \toprule
        Parameter             & Value                \\
        \midrule
        Smoother              & Chebyshev            \\
        Smoother degree       & 2                    \\
        Smoother sweeps       & 2                    \\
        V-cycle applications  & 1                    \\
        Aggregation type      & Smoothed aggregation \\
        Aggregation threshold & $0.01$              \\
        Max size coarse level & 2000                 \\
        Coarse solver         & Amesos-KLU           \\
        \bottomrule
    \end{tabular}
    \caption{Parameters for \textsc{TrilinosML} preconditioner~\cite{Trilinos} (Trilinos 14.4.0). When using polynomial degrees higher than one, $\mathtt{higher\_order\_elements}$ is set to true.}
    \label{tab:amg_params}
\end{table}

For all tests, we terminate the preconditioned conjugate gradient (PCG) iteration when either the absolute residual norm falls below $10^{-9}$ or the residual has been reduced by $6$ orders of magnitude.

In our experiments, we always skip the leaves level, as described in Remark~\ref{rmrk:parameters}. For each set of experiments, we report the parameter $m$ used for point-based agglomeration, $m_{points}$, and cell-based agglomeration, $m_{cells}$. For both strategies, the parameter $M$ is always equal to $2m$. To ensure fast multigrid convergence, we do not use every available R-tree level in the multilevel preconditioner. Instead, we construct the coarsening hierarchy from the bottom up, starting from the original grid and using the R-tree-based agglomeration strategy described in Section~\ref{sec:rtree}. We specify the number of levels used in each experiment.

Table~\ref{tab:legend} describes the entries used in the subsequent tables of experimental results, while Figure~\ref{fig:meshes} shows all the meshes used in the experiments. In two dimensions, we consider both structured and unstructured meshes of the unit square $[0,1]^2$. The three-dimensional tests range from a structured mesh of $[0,1]^3$ to idealized and realistic meshes of a left ventricle. We also consider two simplicial meshes: an unstructured mesh of the unit square and a liver mesh.
We specifically employ unstructured and simplicial meshes to assess the applicability of our framework in practical scenarios.

All numerical experiments are performed using the C++ software project \textsc{polyDEAL}~\cite{polydeal}, which is built on the finite element library~\textsc{deal.II}~\cite{dealII95,dealIIdesign}. It provides
building blocks for solving PDEs using high-order discontinuous Galerkin methods on polytopic meshes. In particular, it supports the R-tree-based agglomeration strategy
and the agglomerated multigrid framework described in the previous section. All our experiments are publicly available
in the \textsc{polyDEAL} GitHub repository~\cite{polydeal}, which also provides detailed instructions for running the example programs.

\begin{table}[H]
    \centering
    \begin{tabular}{c|c}
    \hline
    Entry & Meaning \\
    \hline
       Point agglo & Point-based agglomeration strategy was used \\
       Cell agglo & Cell-based agglomeration strategy was used \\
       $\mathcal{Q}^{p'}$ coarse elements & The polynomial degree $p'$ of the coarse space employed \\
       Ref.  &  Number of refinements applied to the starting mesh \\
       DoFs & Number of degrees of freedom \\
       lvls  &  Number of levels of the multilevel preconditioner \\
       $H$ & Maximum box size on the coarsest level  \\
       $h$  & Maximum element size on the finest level \\
       $H/h$ & Ratio between $H$ and $h$ \\
       Iters & Number of iterations performed by PCG \\
       Cond. & Condition number estimated by PCG\\
       \hline
    \end{tabular}
    \caption{Legend for the entries of the subsequent tables.}
    \label{tab:legend}
\end{table}

\begin{figure}[!htb]
  \centering
  \subfloat[Structured square]{%
  \label{fig:structured_square_view}%
  \includegraphics[width=0.28\textwidth]{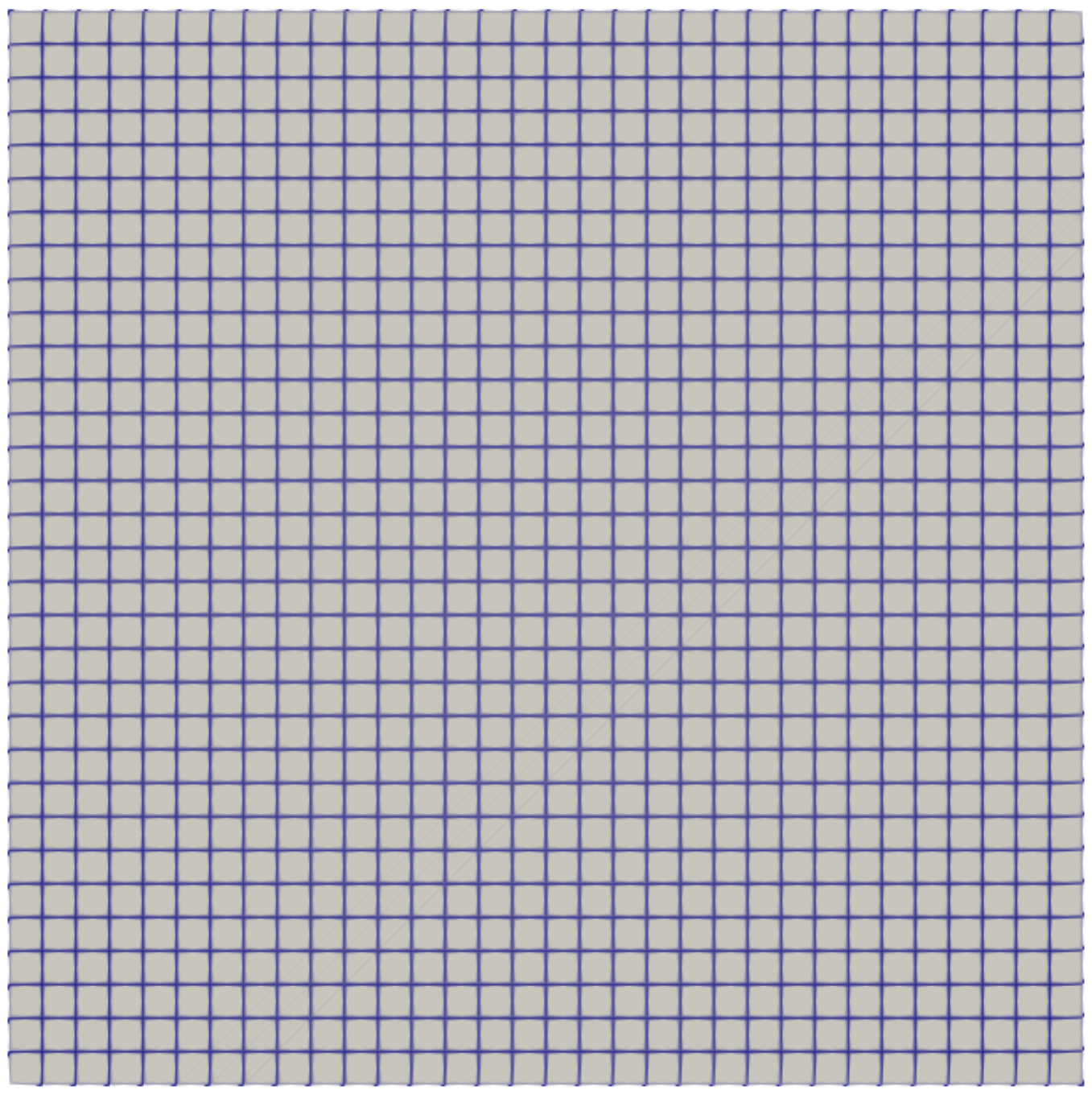}%
  }\hspace{7mm}
  \subfloat[Unstructured square]{%
  \label{fig:unstructured_square_view}%
  \includegraphics[width=0.28\textwidth]{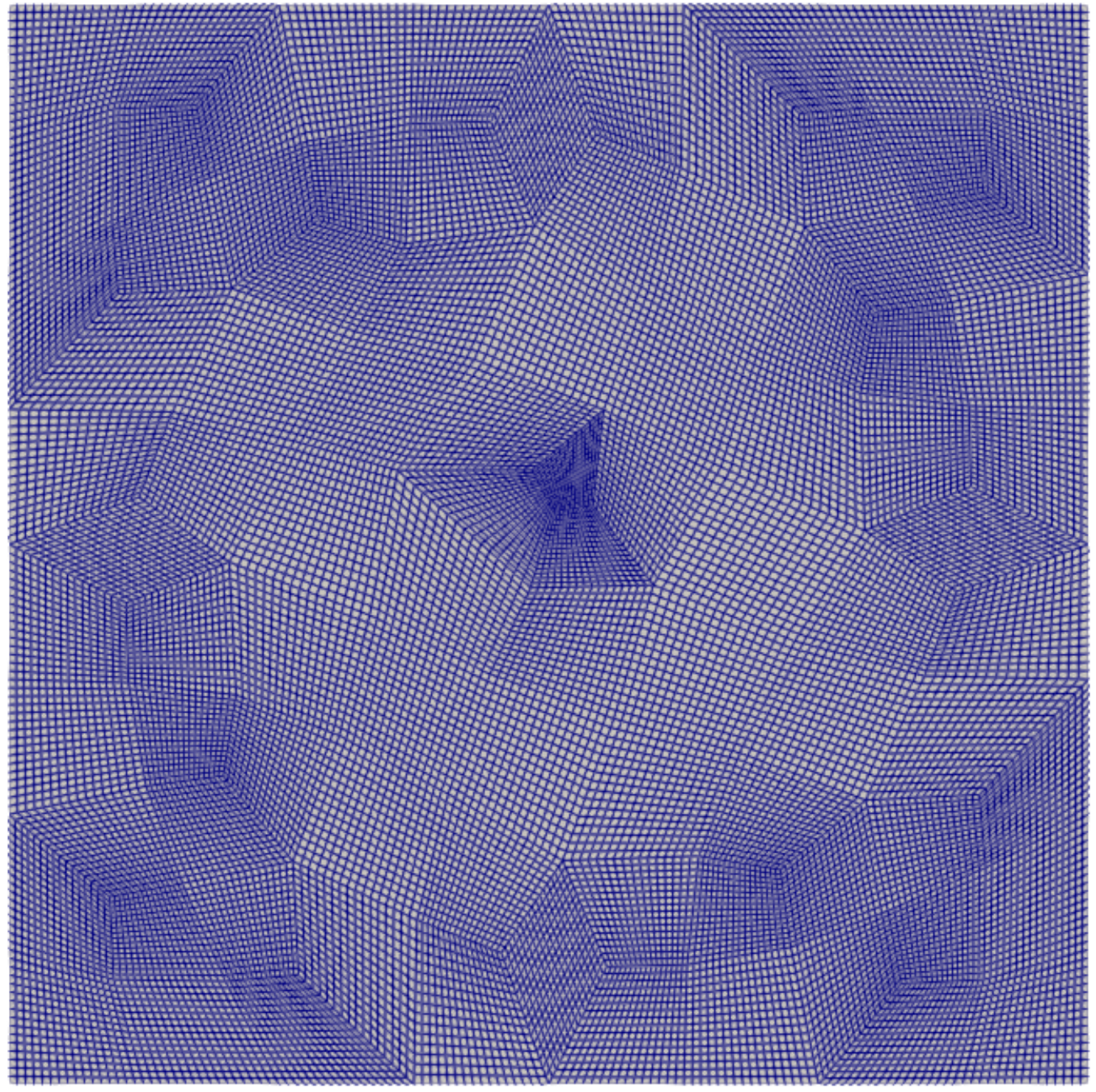}%
  }\hspace{7mm}
  \subfloat[Unstructured square (simplices)]{\label{fig:unstructured_square_simplexes_view} \includegraphics[width=0.28\textwidth]{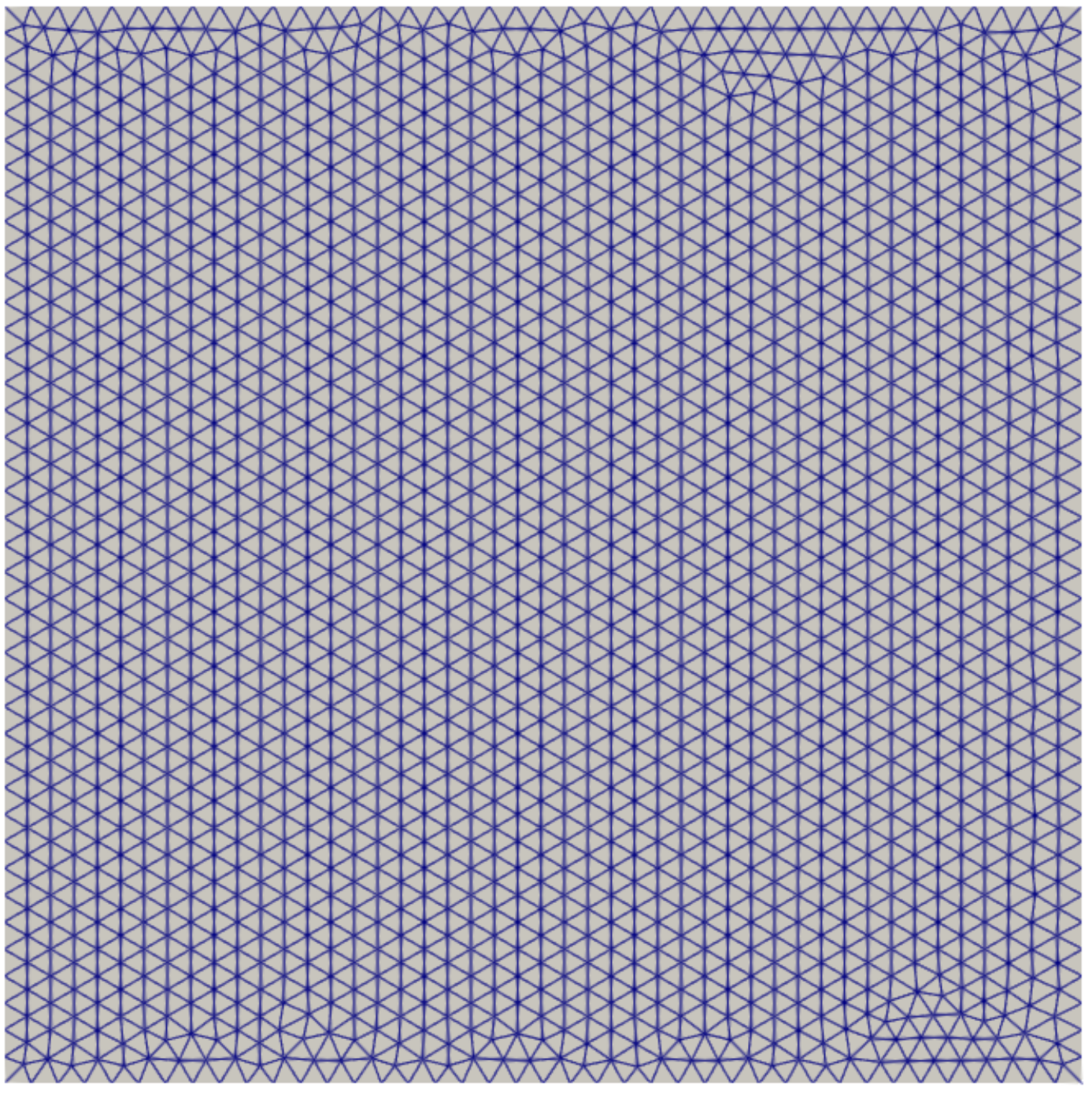}}
  
  \medskip
  \vspace{2mm}
  \subfloat[Structured cube]{%
  \label{fig:cube_view}%
  \includegraphics[width=0.28\textwidth]{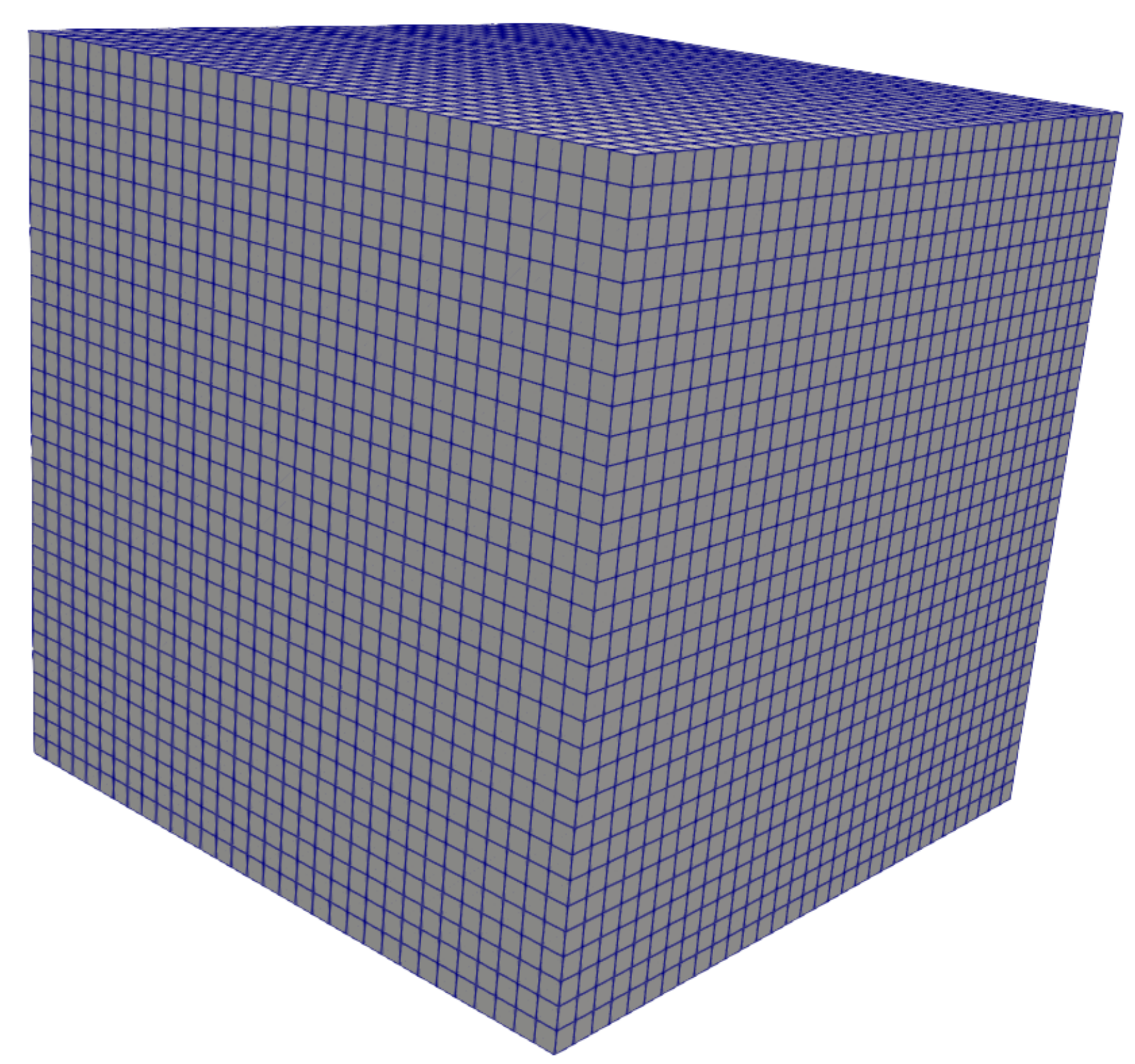}%
  }\hspace{7mm}%
  \subfloat[Idealized left ventricle mesh]{%
  \label{fig:id_lv_mesh}%
  \includegraphics[width=0.28\textwidth]{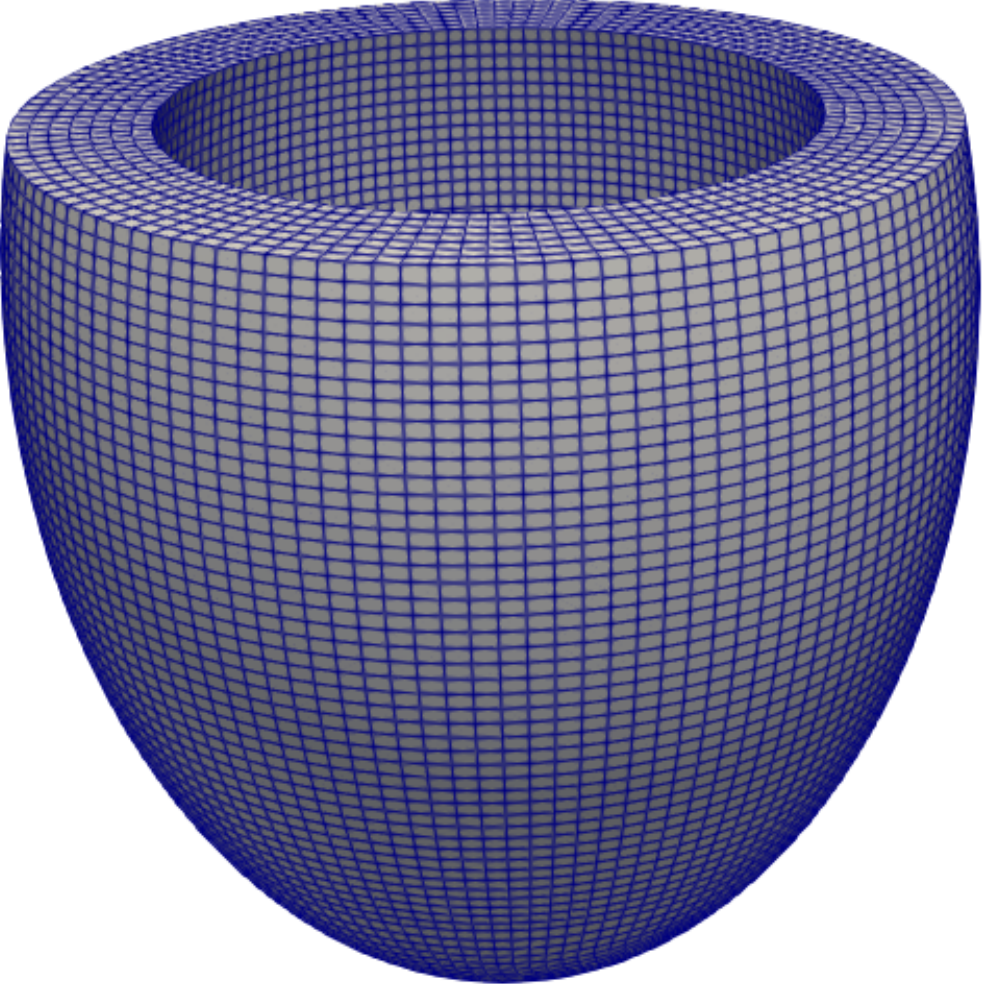}%
  }\hspace{7mm}%
  \subfloat[Realistic left ventricle mesh]{%
  \label{fig:real_lv_mesh}%
  \includegraphics[width=0.28\textwidth]{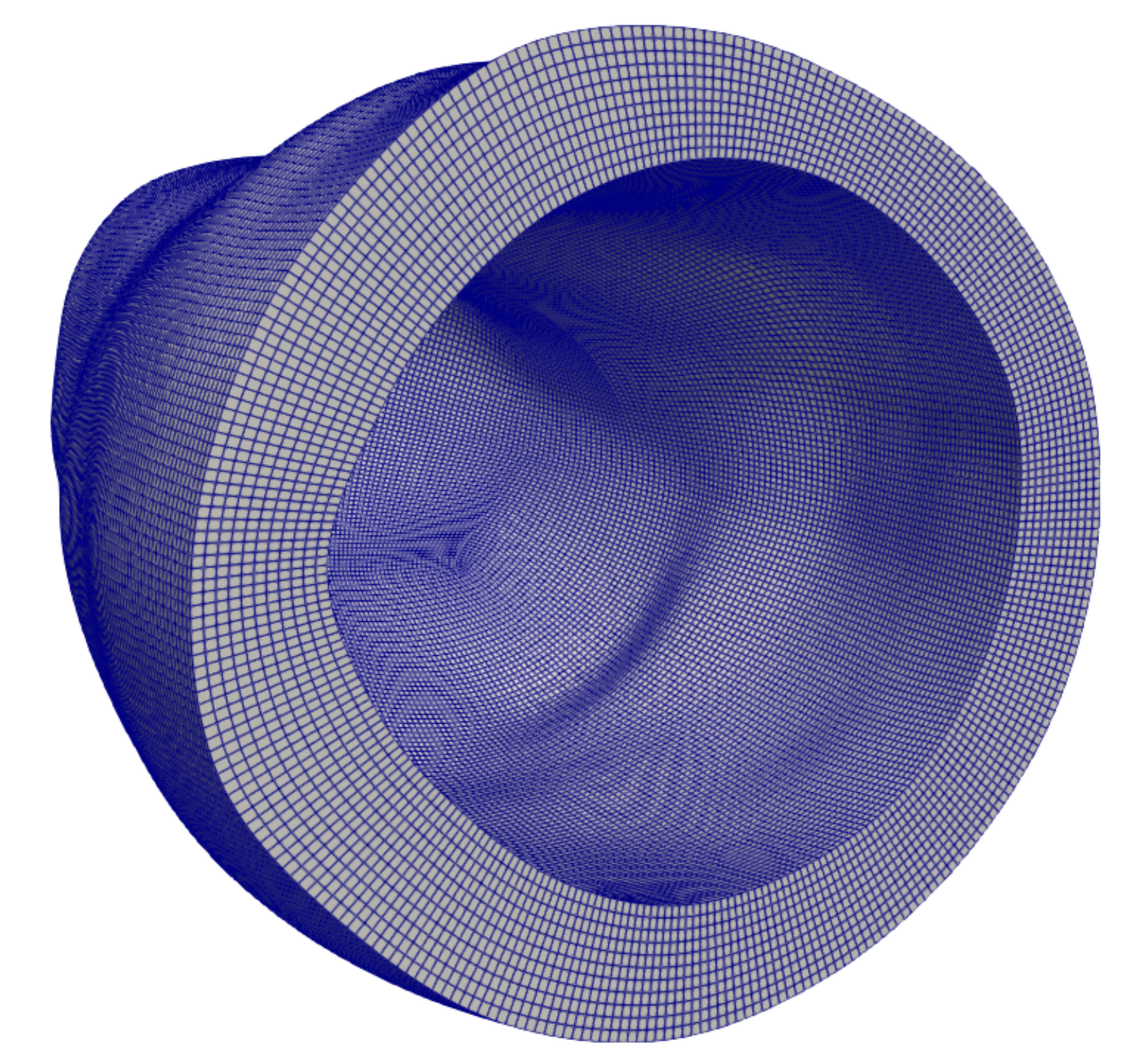}
    }
  \medskip
  \vspace{2mm}
  \subfloat[Liver mesh (simplices)]{%
  \label{fig:liver_mesh}%
  \includegraphics[width=0.28\textwidth]{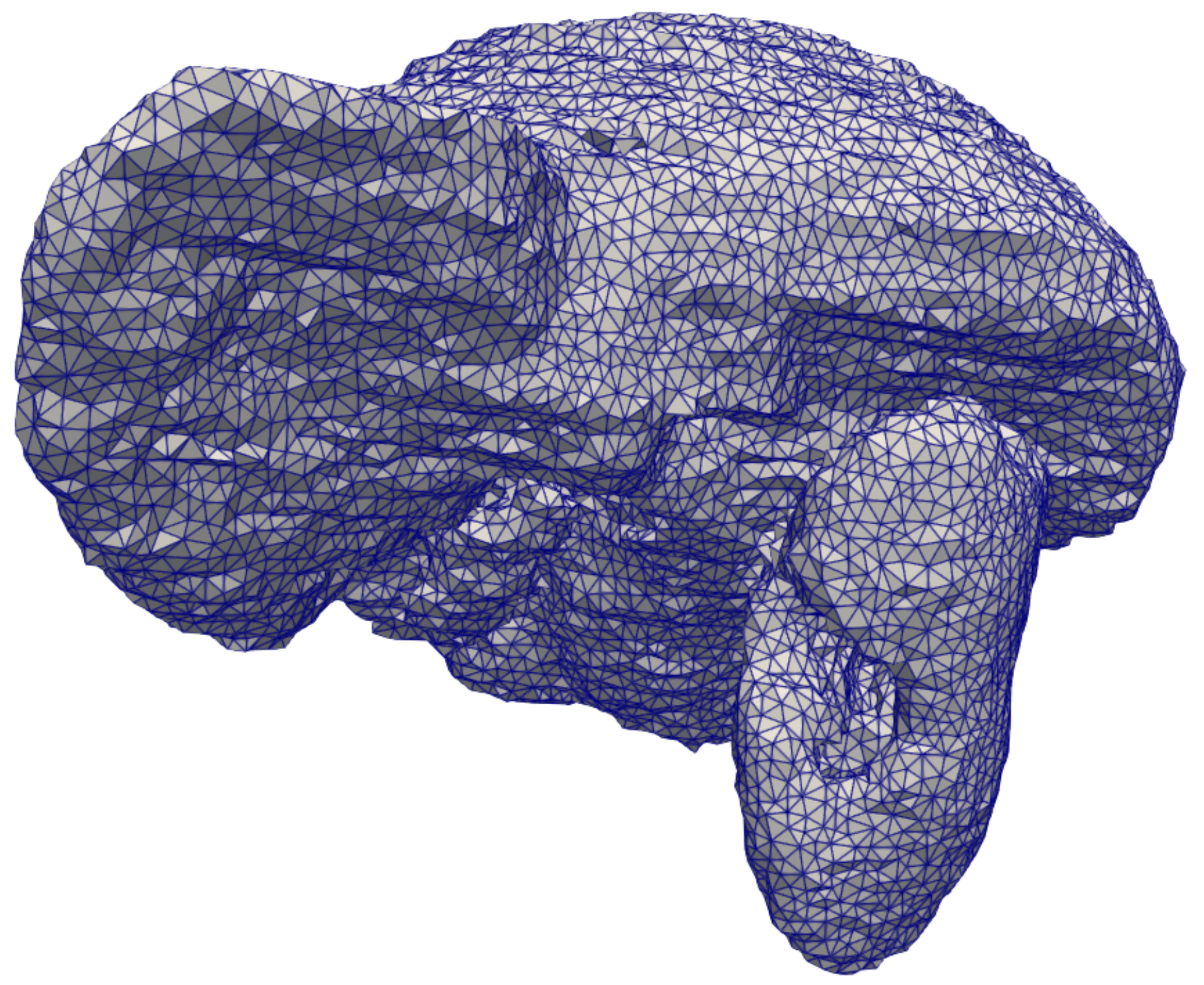}%
  }
  \caption{The set of meshes used in the numerical experiments.}\label{fig:meshes}
  \end{figure}

\subsection{Structured square}
To validate the theoretical findings, we start by considering the simplest case of a structured discretization of the unit square $[0,1]^2$. In this case, the starting mesh at zero refinements is the single cell unit square. 

In Table~\ref{table:fixed_struc_square_Q1}, we try to keep the ratio $H/h$ constant by fixing the number of levels of the multilevel preconditioner. With cell-based agglomeration, the iteration counts and condition-number estimates stabilize at nearly constant values for both choices of $p'$. As suggested by the two-level theory, the case $p'=0$ stabilizes at a higher iteration count, whereas $p'=1$ yields lower and stable iteration counts.

The point-based agglomeration strategy produces agglomerates of lower quality in a structured setting. In any case, the theory still holds, and we can see that the iteration counts behave similarly to those for cell-based agglomeration.

\begin{table}[!htb]
\centering
\begin{subtable}{\textwidth}
\centering
    \begin{tabular}{c c c c | c c c c}
        \hline
        \multicolumn{4}{c|}{Point agglo}  & \multicolumn{2}{c}{$\mathcal{Q}^0$ coarse elements} & \multicolumn{2}{c}{$\mathcal{Q}^1$ coarse elements} \\
        \hline
        Ref. & DoFs & lvls & $H/h$ & Iters & Cond. & Iters & Cond. \\
        \hline
        6 & 4,225 & 2 & 4.74 & 12 & 3.84 & 4 & 1.04 \\
        7 & 16,641 & 2 & 4.74 & 15 & 4.30 & 4 & 1.04 \\
        8 & 66,049 & 2 & 4.74 & 16 & 4.48 & 4 & 1.04 \\
        9 & 263,169 & 2 & 5.39 & 17 & 4.54 & 4 & 1.04 \\
        10 & 1,050,625 & 2 & 4.74 & 18 & 4.65 & 4 & 1.04 \\
        \hline
    \end{tabular}
\end{subtable}
\\[1em]
\begin{subtable}{\textwidth}
\centering
    \begin{tabular}{c c c c | c c c c}
        \hline
        \multicolumn{4}{c|}{Cell agglo}  & \multicolumn{2}{c}{$\mathcal{Q}^0$ coarse elements} & \multicolumn{2}{c}{$\mathcal{Q}^1$ coarse elements} \\
        \hline
        Ref. & DoFs & lvls & $H/h$ & Iters & Cond. & Iters & Cond. \\
        \hline
        6 & 4,225 & 2 & 4.00 & 11 & 3.63 & 4 & 1.03 \\
        7 & 16,641 & 2 & 4.00 & 14 & 3.98 & 4 & 1.02 \\
        8 & 66,049 & 2 & 4.00 & 15 & 4.08 & 3 & 1.01 \\
        9 & 263,169 & 2 & 4.00 & 15 & 4.10 & 3 & 1.01 \\
        10 & 1,050,625 & 2 & 4.00 & 15 & 4.11 & 3 & 1.01 \\
        \hline
    \end{tabular}
\end{subtable}

\caption{Experimental results trying to keep the ratio $H/h$ fixed between refinements with $\mathcal{Q}^1$ elements on the structured square mesh. $m_{cells} = 2$ and $m_{points} = 2$.}
\label{table:fixed_struc_square_Q1}
\end{table}

In Table~\ref{table:ref_struc_square_Q1}, we show the iteration counts under $h$-refinement. As the ratio $H/h$ increases, so do the iteration counts and condition numbers. The increase in iterations is higher for the point-based agglomeration as expected given the lower quality agglomerates. Even though we have not developed a multilevel theory, it is reasonable to expect such behavior given the two-level theory. By refining without controlling the coarsest level, the ratio $H/h$ grows unboundedly, resulting in a deterioration of the coarse space. 
In Table~\ref{table:ref_struc_square_Q1_trilinos}, we report the iteration counts for Trilinos AMG. Our multilevel preconditioner is competitive with Trilinos AMG in terms of iteration counts.

\begin{table}[!htb]
\centering
\begin{subtable}{\textwidth}
\centering
    \begin{tabular}{c c c c | c c c c}
        \hline
        \multicolumn{4}{c|}{Point agglo}  & \multicolumn{2}{c}{$\mathcal{Q}^0$ coarse elements} & \multicolumn{2}{c}{$\mathcal{Q}^1$ coarse elements} \\
        \hline
        Ref. & DoFs & lvls & $H/h$ & Iters & Cond. & Iters & Cond. \\
        \hline
        6 & 4,225 & 3 & 8.63 & 12 & 5.11 & 5 & 1.07 \\
        7 & 16,641 & 4 & 18.38 & 17 & 9.70 & 5 & 1.19 \\
        8 & 66,049 & 5 & 33.59 & 24 & 18.52 & 7 & 1.54 \\
        9 & 263,169 & 6 & 64.66 & 35 & 36.37 & 10 & 2.16 \\
        10 & 1,050,625 & 7 & 128.50 & 49 & 71.66 & 14 & 3.73 \\
        \hline
    \end{tabular}
\end{subtable}
\\[1em]
\begin{subtable}{\textwidth}
\centering
    \begin{tabular}{c c c c | c c c c}
        \hline
        \multicolumn{4}{c|}{Cell agglo}  & \multicolumn{2}{c}{$\mathcal{Q}^0$ coarse elements} & \multicolumn{2}{c}{$\mathcal{Q}^1$ coarse elements} \\
        \hline
        Ref. & DoFs & lvls & $H/h$ & Iters & Cond. & Iters & Cond. \\
        \hline
        6 & 4,225 & 2 & 4.00 & 11 & 3.63 & 4 & 1.03 \\
        7 & 16,641 & 3 & 8.00 & 15 & 6.85 & 4 & 1.02 \\
        8 & 66,049 & 4 & 16.00 & 22 & 13.25 & 4 & 1.04 \\
        9 & 263,169 & 5 & 32.00 & 30 & 26.13 & 6 & 1.35 \\
        10 & 1,050,625 & 6 & 64.00 & 42 & 52.10 & 8 & 2.05 \\
        \hline
    \end{tabular}
\end{subtable}
\caption{Experimental results increasing refinements with $\mathcal{Q}^1$ elements on the structured square mesh. $m_{cells} = 2$ and $m_{points} = 2$.}
\label{table:ref_struc_square_Q1}
\end{table}

In Table~\ref{table:fixed_struc_square_Q3}, we use $\mathcal{Q}^3$ conforming elements to discretize the model problem. We use $p'=1$ or $p'=2$ as coarse space degree. For the higher degree coarse space we need to increase $m_{points}$, otherwise the agglomerates might not have enough points to build the transfer operators (see remark~\ref{rmrk:parameters}). Again, the theory for the two-level preconditioner is confirmed since the iteration counts are stable. For the point-based agglomeration strategy, increasing $p'$ does not provide substantial gains. In fact, even though we increased $p'$ we also increased $H/h$. Some improvements in terms of iteration counts can be seen for the  cell-based agglomeration. 

\begin{table}[!htb]
\centering
\begin{subtable}{\textwidth}
\centering
    \begin{tabular}{c c | c c c c | c c c c}
        \hline
        \multicolumn{2}{c|}{Point agglo}  & \multicolumn{4}{c|}{$\mathcal{Q}^1$ coarse elements} & \multicolumn{4}{c}{$\mathcal{Q}^2$ coarse elements} \\
        \hline
        Ref. & DoFs & lvls & $H/h$ & Iters & Cond. & lvls & $H/h$ & Iters & Cond. \\
        \hline
        5 & 9,409 & 2 & 2.18 & 6 & 1.40 & 2 & 3.26 & 5 & 1.24 \\
        6 & 37,249 & 2 & 2.18 & 6 & 1.40 & 2 & 3.68 & 6 & 1.44 \\
        7 & 148,225 & 2 & 2.37 & 6 & 1.46 & 2 & 5.00 & 6 & 1.57 \\
        8 & 591,361 & 2 & 2.18 & 6 & 1.42 & 2 & 3.34 & 5 & 1.26 \\
        9 & 2,362,369 & 2 & 2.37 & 6 & 1.39 & 2 & 3.74 & 5 & 1.34 \\
        \hline
    \end{tabular}
\end{subtable}\\[1em]
\begin{subtable}{\textwidth}
\centering
    \begin{tabular}{c c c c | c c c c}
        \hline
        \multicolumn{4}{c|}{Cell agglo}  & \multicolumn{2}{c}{$\mathcal{Q}^1$ coarse elements} & \multicolumn{2}{c}{$\mathcal{Q}^2$ coarse elements} \\
        \hline
        Ref. & DoFs & lvls & $H/h$ & Iters & Cond. & Iters & Cond. \\
        \hline
        5 & 9,409 & 2 & 4 & 9 & 2.68 & 5 & 1.26 \\
        6 & 37,249 & 2 & 4 & 9 & 2.70 & 5 & 1.26 \\
        7 & 148,225 & 2 & 4 & 8 & 2.66 & 5 & 1.26 \\
        8 & 591,361 & 2 & 4 & 8 & 2.66 & 5 & 1.26 \\
        9 & 2,362,369 & 2 & 4 & 8 & 2.63 & 5 & 1.26 \\
        \hline
    \end{tabular}
\end{subtable}

\caption{Experimental results trying to keep the ratio $H/h$ fixed between refinements with $\mathcal{Q}^3$ elements on the structured square mesh. When the coarse space has degree $p' = 1$ we set $m_{cells} = 2$ and $m_{points} = 2$. When the coarse space has degree $p' = 2$ we set $m_{cells} = 2$ and $m_{points} = 4$.}
\label{table:fixed_struc_square_Q3}
\end{table}

In Table~\ref{table:ref_struc_square_Q3}, we use conforming $\mathcal{Q}^3$ elements to discretize the model problem and report iteration counts under $h$-refinement. We observe behavior similar to that obtained with $\mathcal{Q}^1$ elements. Cell-based agglomeration performs slightly better than point-based agglomeration. Table~\ref{table:ref_struc_square_Q1_trilinos} also shows that our multilevel preconditioner requires fewer iterations than Trilinos AMG. Indeed, AMG is known to perform poorly on some high-order discretizations~\cite{amg:high_order}.

\begin{table}[!htb]
\centering
\begin{subtable}{\textwidth}
\centering
    \begin{tabular}{c c | c c c c | c c c c}
        \hline
        \multicolumn{2}{c|}{Point agglo}  & \multicolumn{4}{c|}{$\mathcal{Q}^1$ coarse elements} & \multicolumn{4}{c}{$\mathcal{Q}^2$ coarse elements} \\
        \hline
        Ref. & DoFs & lvls & $H/h$ & Iters & Cond. & lvls & $H/h$ & Iters & Cond. \\
        \hline
        5 & 9,409 & 3 & 4.17 & 8 & 1.88 & 2 & 3.26 & 5 & 1.24 \\
        6 & 37,249 & 4 & 7.82 & 10 & 2.66 & 3 & 8.63 & 7 & 1.49 \\
        7 & 148,225 & 5 & 16.83 & 14 & 4.34 & 3 & 11.12 & 8 & 2.03 \\
        8 & 591,361 & 6 & 31.15 & 20 & 7.47 & 4 & 24.96 & 11 & 2.96 \\
        9 & 2,362,369 & 7 & 59.62 & 28 & 13.52 & 5 & 63.95 & 15 & 4.45 \\
        \hline
    \end{tabular}
\end{subtable}\\[1em]
\begin{subtable}{\textwidth}
\centering
    \begin{tabular}{c c c c | c c c c}
        \hline
        \multicolumn{4}{c|}{Cell agglo}  & \multicolumn{2}{c}{$\mathcal{Q}^1$ coarse elements} & \multicolumn{2}{c}{$\mathcal{Q}^2$ coarse elements} \\
        \hline
        Ref. & DoFs & lvls & $H/h$ & Iters & Cond. & Iters & Cond. \\
        \hline
        5 & 9,409 & 3 & 8 & 10 & 2.71 & 6 & 1.26 \\
        6 & 37,249 & 4 & 16 & 11 & 2.75 & 6 & 1.26 \\
        7 & 148,225 & 5 & 32 & 14 & 4.30 & 6 & 1.21 \\
        8 & 591,361 & 6 & 64 & 18 & 7.53 & 7 & 1.33 \\
        9 & 2,362,369 & 7 & 128 & 25 & 14.08 & 8 & 1.74 \\
        \hline
    \end{tabular}
\end{subtable}

\caption{Experimental results increasing refinements with $\mathcal{Q}^3$ elements on the structured square mesh. When the coarse space has degree $p' = 1$ we set $m_{cells} = 2$ and $m_{points} = 2$. When the coarse space has degree $p' = 2$ we set $m_{cells} = 2$ and $m_{points} = 4$.}
\label{table:ref_struc_square_Q3}
\end{table}

\begin{table}[!htb]
    \centering
    \begin{subtable}{0.4\textwidth}
    \centering
    \begin{tabular}{c c}
        \hline
        \multicolumn{2}{c}{Trilinos AMG} \\
        \hline
        Ref. & Iters \\
        \hline
        6 & 7 \\
        7 & 7 \\
        8 & 8 \\
        9 & 9 \\
        10 & 10 \\
        \hline
    \end{tabular}
    \end{subtable}
    \begin{subtable}{0.4\textwidth}
    \centering
    \begin{tabular}{c c}
        \hline
        \multicolumn{2}{c}{Trilinos AMG} \\
        \hline
        Ref. & Iters \\
        \hline
        5 & 20 \\
        6 & 21 \\
        7 & 23 \\
        8 & 24 \\
        9 & 27 \\
        \hline
    \end{tabular}
    \end{subtable}
    \caption{Trilinos AMG iterations while increasing refinements with $\mathcal{Q}^1$ elements (first table) and $\mathcal{Q}^3$ elements (second table) on the structured square mesh.}
    \label{table:ref_struc_square_Q1_trilinos}
\end{table}

\subsection{Unstructured square}
Here, we consider an unstructured discretization of the unit square $[0,1]^2$. The initial, unrefined mesh consists of 91 quadrilateral cells. In Table~\ref{table:fixed_unstruc_square_Q1}, we try to keep the ratio $H/h$ constant by fixing the number of levels of the multilevel preconditioner. In the unstructured case, keeping this ratio constant is difficult because the cells are not axis-aligned and R-tree agglomeration is heuristic. For $p'=0$, the iteration counts and condition-number estimates increase moderately with $H/h$, whereas for $p'=1$ the iteration counts increase more slowly. This behavior occurs in both the two-level and multilevel cases. On this mesh, cell-based and point-based agglomeration perform similarly. This is expected because, unlike on the axis-aligned structured square mesh, the cell-based agglomerates do not naturally conform to the geometry of the fine mesh.

\begin{table}[!htb]
\centering
\begin{subtable}{\textwidth}
\centering
    \begin{tabular}{c c c c | c c c c}
        \hline
        \multicolumn{4}{c|}{Point agglo}  & \multicolumn{2}{c}{$\mathcal{Q}^0$ coarse elements} & \multicolumn{2}{c}{$\mathcal{Q}^1$ coarse elements} \\
        \hline
        Ref. & DoFs & lvls & $H/h$ & Iters & Cond. & Iters & Cond. \\
        \hline
        2 & 1,521 & 4 & 9.70 & 11 & 3.52 & 5 & 1.12 \\
        3 & 5,953 & 4 & 10.24 & 16 & 6.31 & 6 & 1.25 \\
        4 & 23,553 & 4 & 11.77 & 22 & 11.04 & 7 & 1.48 \\
        5 & 93,697 & 4 & 12.87 & 30 & 15.95 & 8 & 1.72 \\
        6 & 373,761 & 4 & 14.62 & 35 & 19.16 & 10 & 2.21 \\
        7 & 1,492,993 & 4 & 15.73 & 40 & 21.82 & 11 & 2.78 \\
        8 & 5,967,873 & 4 & 17.02 & 43 & 24.26 & 12 & 3.37 \\
        \hline
    \end{tabular}
\end{subtable}
\\[1em]
\begin{subtable}{\textwidth}
\centering
    \begin{tabular}{c c c c | c c c c}
        \hline
        \multicolumn{4}{c|}{Cell agglo}  & \multicolumn{2}{c}{$\mathcal{Q}^0$ coarse elements} & \multicolumn{2}{c}{$\mathcal{Q}^1$ coarse elements} \\
        \hline
        Ref. & DoFs & lvls & $H/h$ & Iters & Cond. & Iters & Cond. \\
        \hline
        2 & 1,521 & 4 & 10.45 & 11 & 3.73 & 6 & 1.23 \\
        3 & 5,953 & 4 & 11.13 & 17 & 6.71 & 7 & 1.43 \\
        4 & 23,553 & 4 & 12.83 & 23 & 11.74 & 8 & 1.72 \\
        5 & 93,697 & 4 & 14.01 & 30 & 16.31 & 9 & 2.04 \\
        6 & 373,761 & 4 & 15.68 & 37 & 19.64 & 10 & 2.31 \\
        7 & 1,492,993 & 4 & 16.68 & 41 & 22.19 & 11 & 2.95 \\
        8 & 5,967,873 & 4 & 17.54 & 44 & 24.79 & 12 & 3.54 \\
        \hline
    \end{tabular}
\end{subtable}
\\[1em]
\begin{subtable}{\textwidth}
\centering
    \begin{tabular}{c c c c | c c c c}
        \hline
        \multicolumn{4}{c|}{Point agglo}  & \multicolumn{2}{c}{$\mathcal{Q}^0$ coarse elements} & \multicolumn{2}{c}{$\mathcal{Q}^1$ coarse elements} \\
        \hline
        Ref. & DoFs & lvls & $H/h$ & Iters & Cond. & Iters & Cond. \\
        \hline
        2 & 1,521 & 2 & 2.92 & 11 & 3.22 & 4 & 1.05 \\
        3 & 5,953 & 2 & 3.03 & 14 & 4.07 & 5 & 1.12 \\
        4 & 23,553 & 2 & 3.51 & 17 & 5.17 & 5 & 1.25 \\
        5 & 93,697 & 2 & 4.02 & 19 & 5.76 & 6 & 1.42 \\
        6 & 373,761 & 2 & 4.19 & 21 & 6.30 & 6 & 1.57 \\
        7 & 1,492,993 & 2 & 4.62 & 23 & 7.21 & 7 & 1.76 \\
        8 & 5,967,873 & 2 & 5.00 & 26 & 8.43 & $\dagger$ & $\dagger$ \\
        \hline
    \end{tabular}
\end{subtable}
\\[1em]
\begin{subtable}{\textwidth}
\centering
    \begin{tabular}{c c c c | c c c c}
        \hline
        \multicolumn{4}{c|}{Cell agglo}  & \multicolumn{2}{c}{$\mathcal{Q}^0$ coarse elements} & \multicolumn{2}{c}{$\mathcal{Q}^1$ coarse elements} \\
        \hline
        Ref. & DoFs & lvls & $H/h$ & Iters & Cond. & Iters & Cond. \\
        \hline
        2 & 1,521 & 2 & 3.73 & 11 & 3.46 & 5 & 1.08 \\
        3 & 5,953 & 2 & 4.00 & 15 & 4.41 & 5 & 1.11 \\
        4 & 23,553 & 2 & 4.27 & 17 & 5.19 & 5 & 1.21 \\
        5 & 93,697 & 2 & 4.51 & 19 & 5.83 & 6 & 1.38 \\
        6 & 373,761 & 2 & 4.92 & 22 & 6.49 & 6 & 1.52 \\
        7 & 1,492,993 & 2 & 5.22 & 24 & 7.47 & 7 & 1.69 \\
        8 & 5,967,873 & 2 & 5.67 & 27 & 9.04 & $\dagger$ & $\dagger$ \\
        \hline
    \end{tabular}
\end{subtable}
\caption{Trying to keep $H/h$ fixed with $\mathcal{Q}^1$ elements on the unstructured square mesh. $m_{cells} = 2$ and $m_{points} = 2$. The $\dagger$ symbol indicates the solver ran out of memory.}
\label{table:fixed_unstruc_square_Q1}
\end{table}

In Table~\ref{table:ref_unstruc_square_Q1}, we show the iteration counts under $h$-refinement. Again, we notice the unbounded increase in iterations of the PCG algorithm. In contrast to the structured case, we see that neither agglomeration strategy provides a significant advantage: point-based and cell-based agglomeration produce similar results. We remark that using $p'=0$ is suboptimal. The core of our algorithm is using elements of higher order (at least one) to give a better representation of the coarse space. A high-quality coarse space allows the PCG algorithm to converge faster. 
Again, we report in Table~\ref{table:ref_unstruc_square_Q1_trilinos} the iteration counts for Trilinos AMG. The iteration counts of our multilevel preconditioner are also competitive with those of Trilinos AMG in the unstructured case, provided that a reasonable number of levels and a higher-degree coarse space are employed.

\begin{table}[!htb]
    \centering
    \begin{subtable}{\textwidth}
\centering
    \begin{tabular}{c c c c | c c c c}
        \hline
        \multicolumn{4}{c|}{Point agglo}  & \multicolumn{2}{c}{$\mathcal{Q}^0$ coarse elements} & \multicolumn{2}{c}{$\mathcal{Q}^1$ coarse elements} \\
        \hline
        Ref. & DoFs & lvls & $H/h$ & Iters & Cond. & Iters & Cond. \\
        \hline
        2 & 1,521 & 3 & 5.22 & 11 & 3.52 & 5 & 1.08 \\
        3 & 5,953 & 4 & 10.24 & 16 & 6.31 & 6 & 1.25 \\
        4 & 23,553 & 5 & 20.39 & 22 & 11.73 & 9 & 1.82 \\
        5 & 93,697 & 6 & 40.74 & 31 & 23.29 & 13 & 3.08 \\
        6 & 373,761 & 7 & 81.03 & 44 & 45.63 & 19 & 5.38 \\
        7 & 1,492,993 & 8 & 161.83 & 64 & 90.35 & 27 & 10.11 \\
        8 & 5,967,873 & 9 & 323.44 & 94 & 180.13 & 40 & 19.68 \\
        \hline
    \end{tabular}
\end{subtable}
\\[1em]
\begin{subtable}{\textwidth}
\centering
    \begin{tabular}{c c c c | c c c c}
        \hline
        \multicolumn{4}{c|}{Cell agglo}  & \multicolumn{2}{c}{$\mathcal{Q}^0$ coarse elements} & \multicolumn{2}{c}{$\mathcal{Q}^1$ coarse elements} \\
        \hline
        Ref. & DoFs & lvls & $H/h$ & Iters & Cond. & Iters & Cond. \\
        \hline
        2 & 1,521 & 3 & 5.95 & 11 & 3.73 & 5 & 1.13 \\
        3 & 5,953 & 4 & 11.13 & 17 & 6.71 & 7 & 1.43 \\
        4 & 23,553 & 5 & 21.16 & 23 & 12.33 & 9 & 1.99 \\
        5 & 93,697 & 6 & 41.40 & 31 & 23.51 & 13 & 3.17 \\
        6 & 373,761 & 7 & 81.83 & 45 & 46.07 & 19 & 5.66 \\
        7 & 1,492,993 & 8 & 162.65 & 64 & 91.18 & 28 & 10.65 \\
        8 & 5,967,873 & 9 & 324.28 & 92 & 181.67 & 40 & 20.64 \\
        \hline
    \end{tabular}
\end{subtable}
    \caption{Experimental results increasing refinements with $\mathcal{Q}^1$ elements on the unstructured square mesh. $m_{cells} = 2$ and $m_{points} = 2$.}
    \label{table:ref_unstruc_square_Q1}
\end{table}
In Table~\ref{table:fixed_unstruc_square_Q3}, we consider the two-level case and a discretization of the model problem with $\mathcal{Q}^3$ elements. Even though we are considering an unstructured mesh, we see that iteration counts remain bounded. Moreover, we see that the point-based agglomeration strategy provides an overall better performance (in terms of iteration counts). As we observed for the structured case, we see that increasing $p'$ provides a notable reduction in iteration counts. For point-based agglomeration the reduction is smaller since we also increased the size of the agglomerates.

\begin{table}[!htb]
\centering
\begin{subtable}{\textwidth}
\centering
\begin{tabular}{c c | c c c c | c c c c}
        \hline
        \multicolumn{2}{c|}{Point agglo}  & \multicolumn{4}{c|}{$\mathcal{Q}^1$ coarse elements} & \multicolumn{4}{c}{$\mathcal{Q}^2$ coarse elements} \\
        \hline
        Ref. & DoFs & lvls & $H/h$ & Iters & Cond. & lvls & $H/h$ & Iters & Cond. \\
        \hline
        2 & 13,297 & 2 & 1.60 & 8 & 1.85 & 2 & 2.33 & 6 & 1.53 \\
        3 & 52,801 & 2 & 1.69 & 8 & 2.11 & 2 & 2.97 & 7 & 2.14 \\
        4 & 210,433 & 2 & 1.78 & 8 & 2.03 & 2 & 2.58 & 6 & 1.75 \\
        5 & 840,193 & 2 & 2.11 & 9 & 2.24 & 2 & 4.62 & 9 & 4.56 \\
        6 & 3,357,697 & 2 & 1.95 & 9 & 2.44 & 2 & 3.45 & 8 & 2.63 \\
        \hline
    \end{tabular}
\end{subtable}\\[1em]
\begin{subtable}{\textwidth}
\centering
    \begin{tabular}{c c c c | c c c c}
        \hline
        \multicolumn{4}{c|}{Cell agglo}  & \multicolumn{2}{c}{$\mathcal{Q}^1$ coarse elements} & \multicolumn{2}{c}{$\mathcal{Q}^2$ coarse elements} \\
        \hline
        Ref. & DoFs & lvls & $H/h$ & Iters & Cond. & Iters & Cond. \\
        \hline
        2 & 13,297 & 2 & 3.73 & 16 & 5.19 & 9 & 2.58 \\
        3 & 52,801 & 2 & 4.00 & 18 & 5.81 & 10 & 3.09 \\
        4 & 210,433 & 2 & 4.27 & 19 & 8.38 & 10 & 3.61 \\
        5 & 840,193 & 2 & 4.51 & 22 & 10.72 & 11 & 3.99 \\
        6 & 3,357,697 & 2 & 4.92 & 24 & 13.48 & 11 & 4.41 \\
        \hline
    \end{tabular}
\end{subtable}

\caption{Experimental results trying to keep the ratio $H/h$ fixed between refinements with $\mathcal{Q}^3$ elements on the unstructured square mesh. When the coarse space has degree $p' = 1$, we set $m_{cells} = 2$ and $m_{points} = 2$. When the coarse space has degree $p' = 2$, we set $m_{cells} = 2$ and $m_{points} = 4$.}
\label{table:fixed_unstruc_square_Q3}
\end{table}
In Table~\ref{table:ref_unstruc_square_Q3}, we consider $h$-refinement with polynomial degree $p = 3$. We have that iterations increase according to the ratio $H/h$ growth. Also in this case, increasing $p'$ provides great benefits in terms of iteration counts. For $p' = 2$ we see that point-based agglomeration performs worse than cell-based: $m_{points} = 4$ allows the ratio $H/h$ to grow more than the ratio of the cell-based agglomeration. In Table~\ref{table:ref_unstruc_square_Q1_trilinos}, we show iteration counts for Trilinos AMG for $p = 3$. For the unstructured square, our preconditioner performs better than Trilinos AMG in terms of iteration counts.
\begin{table}[!htb]
\centering
\begin{subtable}{\textwidth}
\centering
\begin{tabular}{c c | c c c c | c c c c}
        \hline
        \multicolumn{2}{c|}{Point agglo}  & \multicolumn{4}{c|}{$\mathcal{Q}^1$ coarse elements} & \multicolumn{4}{c}{$\mathcal{Q}^2$ coarse elements} \\
        \hline
        Ref. & DoFs & lvls & $H/h$ & Iters & Cond. & lvls & $H/h$ & Iters & Cond. \\
        \hline
        2 & 13,297 & 3 & 3.14 & 10 & 2.25 & 3 & 6.92 & 8 & 2.07 \\
        3 & 52,801 & 4 & 6.11 & 13 & 3.34 & 4 & 18.48 & 11 & 3.23 \\
        4 & 210,433 & 5 & 12.11 & 17 & 5.53 & 5 & 42.44 & 16 & 5.74 \\
        5 & 840,193 & 6 & 24.15 & 25 & 10.10 & 5 & 52.37 & 18 & 7.25 \\
        6 & 3,357,697 & 7 & 48.18 & 34 & 18.63 & 6 & 143.44 & 29 & 12.03 \\
        \hline
    \end{tabular}
\end{subtable}\\[1em]
\begin{subtable}{\textwidth}
\centering
    \begin{tabular}{c c c c | c c c c}
        \hline
        \multicolumn{4}{c|}{Cell agglo}  & \multicolumn{2}{c}{$\mathcal{Q}^1$ coarse elements} & \multicolumn{2}{c}{$\mathcal{Q}^2$ coarse elements} \\
        \hline
        Ref. & DoFs & lvls & $H/h$ & Iters & Cond. & Iters & Cond. \\
        \hline
        2 & 13,297 & 3 & 5.95 & 17 & 5.58 & 10 & 2.63 \\
        3 & 52,801 & 4 & 11.13 & 20 & 7.17 & 11 & 2.99 \\
        4 & 210,433 & 5 & 21.16 & 25 & 9.46 & 13 & 3.37 \\
        5 & 840,193 & 6 & 41.40 & 33 & 15.82 & 15 & 4.12 \\
        6 & 3,357,697 & 7 & 81.83 & 47 & 29.74 & 20 & 6.62 \\
        \hline
    \end{tabular}
\end{subtable}

\caption{Experimental results increasing refinements with $\mathcal{Q}^3$ elements on the unstructured square mesh. When the coarse space has degree $p' = 1$, we set $m_{cells} = 2$ and $m_{points} = 2$. When the coarse space has degree $p' = 2$, we set $m_{cells} = 2$ and $m_{points} = 4$.}
\label{table:ref_unstruc_square_Q3}
\end{table}

\begin{table}[!htb]
    \centering
    \begin{subtable}{0.4\textwidth}
        \centering
    \begin{tabular}{cc}
\hline
\multicolumn{2}{c}{Trilinos AMG} \\
\hline
Ref. & Iters \\
\hline
2 & 1 \\
3 & 9 \\
4 & 12 \\
5 & 14 \\
6 & 16 \\
7 & 18 \\
8 & 20 \\
\hline
\end{tabular}
\end{subtable}
\begin{subtable}{0.4\textwidth}
\begin{tabular}{c c}
        \hline
        \multicolumn{2}{c}{Trilinos AMG} \\
        \hline
        Ref. & Iters \\
        \hline
        2 & 25 \\
        3 & 28 \\
        4 & 32 \\
        5 & 39 \\
        6 & 44 \\
        \hline
    \end{tabular}
\end{subtable}
    \caption{Trilinos AMG iterations while increasing refinements with $\mathcal{Q}^1$ elements (first table) and $\mathcal{Q}^3$ elements (second table) on the unstructured square mesh.}
    \label{table:ref_unstruc_square_Q1_trilinos}
\end{table}
    
\subsection{Unstructured square (simplices)} 
In this section, we consider an unstructured simplicial mesh of the unit square $[0,1]^2$. The  starting mesh at zero refinements is composed of 942 cells. In Table~\ref{table:fixed_unstruc_square_simplexes_Q1} we try to keep the ratio $H/h$ constant by keeping the number of levels of the multilevel preconditioner constant. As in the unstructured square case, keeping the ratio perfectly fixed is hard. We had to increase $m_{cells}$ since we needed more mesh elements for the cell-based agglomeration strategy to guarantee that our R-tree agglomeration is always admissible. Thus, for the cell-based agglomeration, we have large agglomerates and a worse $H/h$ ratio. Nevertheless, the results are similar to the previous case: we confirm that our theoretical findings also hold for non-quadrilateral meshes. 
    
\begin{table}[!htb]
\centering
\begin{subtable}{\textwidth}
\centering
    \begin{tabular}{c c c c | c c c c}
        \hline
        \multicolumn{4}{c|}{Point agglo}  & \multicolumn{2}{c}{$\mathcal{Q}^0$ coarse elements} & \multicolumn{2}{c}{$\mathcal{Q}^1$ coarse elements} \\
        \hline
        Ref. & DoFs & lvls & $H/h$ & Iters & Cond. & Iters & Cond. \\
        \hline
		2 & 7,697 & 2 & 5.16 & 16 & 5.13 & 5 & 1.13 \\
        3 & 30,465 & 2 & 5.01 & 19 & 6.09 & 5 & 1.15 \\
        4 & 121,217 & 2 & 5.24 & 21 & 6.89 & 5 & 1.16 \\
        5 & 483,585 & 2 & 5.55 & 23 & 7.47 & 5 & 1.16 \\
        6 & 1,931,777 & 2 & 5.60 & 24 & 7.84 & 5 & 1.16 \\
        \hline
    \end{tabular}
\end{subtable}
\\[1em]
\begin{subtable}{\textwidth}
\centering
    \begin{tabular}{c c c c | c c c c}
        \hline
        \multicolumn{4}{c|}{Cell agglo}  & \multicolumn{2}{c}{$\mathcal{Q}^0$ coarse elements} & \multicolumn{2}{c}{$\mathcal{Q}^1$ coarse elements} \\
        \hline
        Ref. & DoFs & lvls & $H/h$ & Iters & Cond. & Iters & Cond. \\
        \hline
		2 & 7,697 & 2 & 8.47 & 17 & 6.21 & 5 & 1.17 \\
        3 & 30,465 & 2 & 9.34 & 21 & 7.96 & 7 & 1.60 \\
        4 & 121,217 & 2 & 8.01 & 22 & 7.49 & 6 & 1.41 \\
        5 & 483,585 & 2 & 8.86 & 23 & 7.76 & 6 & 1.70 \\
        6 & 1,931,777 & 2 & 11.16 & 28 & 10.99 & 7 & 1.75 \\
        \hline
    \end{tabular}
\end{subtable}
\\[1em]

\caption{Trying to keep $H/h$ fixed with $\mathcal{P}^1$ elements on the unstructured square mesh (simplices). $m_{cells} = 4$ and $m_{points} = 2$.}
\label{table:fixed_unstruc_square_simplexes_Q1}
\end{table} 

In Table~\ref{table:ref_unstruc_square_simplexes_Q1} we show the iteration counts under $h$-refinement. We also increase the number of levels to change the ratio $H/h$. We report the iteration counts for Trilinos AMG in Table~\ref{table:ref_unstruc_square_simplexes_Q1_trilinos}. The results are similar to those of the previous unstructured square mesh case and confirm that the observations we already presented hold for a mesh of simplices. In this case too, we can see that our multilevel preconditioner performs better than Trilinos AMG as long as we use a reasonable number of levels.

\begin{table}[!htb]
\centering
\begin{subtable}{\textwidth}
\centering
    \begin{tabular}{c c c c | c c c c}
        \hline
        \multicolumn{4}{c|}{Point agglo}  & \multicolumn{2}{c}{$\mathcal{Q}^0$ coarse elements} & \multicolumn{2}{c}{$\mathcal{Q}^1$ coarse elements} \\
        \hline
        Ref. & DoFs & lvls & $H/h$ & Iters & Cond. & Iters & Cond. \\
        \hline
		2 & 7,697 & 2 & 5.16 & 16 & 5.13 & 5 & 1.13 \\
        3 & 30,465 & 3 & 9.76 & 23 & 9.75 & 7 & 1.44 \\
		4 & 121,217 & 4 & 19.39 & 32 & 18.70 & 9 & 2.00\\
		5 & 483,585 & 5 & 39.05 & 46 & 36.51 & 12 & 3.19 \\
		6 & 1,931,777 & 6 & 77.99 & 67 & 72.44 & 17 & 5.56\\
        \hline
    \end{tabular}
\end{subtable}
\\[1em]
\begin{subtable}{\textwidth}
\centering
    \begin{tabular}{c c c c | c c c c}
        \hline
        \multicolumn{4}{c|}{Cell agglo}  & \multicolumn{2}{c}{$\mathcal{Q}^0$ coarse elements} & \multicolumn{2}{c}{$\mathcal{Q}^1$ coarse elements} \\
        \hline
        Ref. & DoFs & lvls & $H/h$ & Iters & Cond. & Iters & Cond. \\
        \hline
		2 & 7,697 & 2 & 8.47 & 17 & 6.21 & 5 & 1.17 \\
        3 & 30,465 & 3 & 21.62 & 25 & 15.97 & 10 & 2.24 \\
		4 & 121,217 & 4 & 53.49 & 37 & 34.32 & 14 & 3.70 \\
		5 & 483,585 & 5 & 154.80 & 53 & 72.80 & 26 & 11.39\\
		6 & 1,931,777 & 6 & 442.33 & 75 & 144.81 & 39 & 24.46 \\
        \hline
    \end{tabular}
\end{subtable}
\\[1em]

\caption{Experimental results increasing refinements with $\mathcal{P}^1$ elements on the unstructured square mesh (simplices). $m_{cells} = 4$ and $m_{points} = 2$.}
\label{table:ref_unstruc_square_simplexes_Q1}
\end{table}  

In Table~\ref{table:fixed_unstruc_square_simplexes_Q3}, we consider the two-level preconditioner. We employ $\mathcal{P}^3$ elements and try to keep the ratio $H/h$ as stable as possible. In this case, we see that increasing $p'$ does not provide a great benefit since we also increased the size of the agglomerates. As we already observed for the unstructured square of quadrilaterals, the point-based agglomeration strategy slightly outperforms the cell-agglomeration strategy: we obtain slightly smaller agglomerates and a smaller $H/h$ ratio. However, the iteration counts are stable, as the theory suggests.

\begin{table}[!htb]
\centering
\begin{subtable}{\textwidth}
\centering
\begin{tabular}{c c | c c c c | c c c c}
        \hline
        \multicolumn{2}{c|}{Point agglo}  & \multicolumn{4}{c|}{$\mathcal{Q}^1$ coarse elements} & \multicolumn{4}{c}{$\mathcal{Q}^2$ coarse elements} \\
        \hline
        Ref. & DoFs & lvls & $H/h$ & Iters & Cond. & lvls & $H/h$ & Iters & Cond. \\
        \hline
        2 & 68,305 & 2 & 2.60 & 8 & 2.10 & 2 & 3.55 & 6 & 1.51 \\
        3 & 272,257 & 2 & 2.00 & 6 & 1.49 & 2 & 3.60 & 7 & 2.01 \\
        4 & 1,087,105 & 2 & 1.96 & 6 & 1.57 & 2 & 3.23 & 6 & 1.49 \\
        5 & 4,344,577 & 2 & 2.37 & 7 & 1.75 & 2 & 3.72 & 6 & 1.55 \\
        \hline
    \end{tabular}
\end{subtable}\\[1em]
\begin{subtable}{\textwidth}
\centering
\begin{tabular}{c c | c c c c | c c c c}
        \hline
        \multicolumn{2}{c|}{Cell agglo}  & \multicolumn{4}{c|}{$\mathcal{Q}^1$ coarse elements} & \multicolumn{4}{c}{$\mathcal{Q}^2$ coarse elements} \\
        \hline
        Ref. & DoFs & lvls & $H/h$ & Iters & Cond. & lvls & $H/h$ & Iters & Cond. \\
        \hline
        2 & 68,305 & 2 & 4.49 & 13 & 3.95 & 2 & 8.47 & 12 & 4.87 \\
        3 & 272,257 & 2 & 4.82 & 13 & 4.01 & 2 & 9.34 & 14 & 6.19 \\
        4 & 1,087,105 & 2 & 4.94 & 13 & 4.15 & 2 & 8.01 & 12 & 5.28 \\
        5 & 4,344,577 & 2 & 5.31 & 14 & 4.43 & 2 & 8.86 & 12 & 7.16 \\
        \hline
    \end{tabular}
\end{subtable}

\caption{Experimental results trying to keep the ratio $H/h$ fixed between refinements with $\mathcal{P}^3$ elements on the unstructured square mesh (simplices). When the coarse space has degree $p' = 1$, we set $m_{cells} = 2$ and $m_{points} = 2$. When the coarse space has degree $p' = 2$, we set $m_{cells} = 4$ and $m_{points} = 4$.}
\label{table:fixed_unstruc_square_simplexes_Q3}
\end{table}

In Table~\ref{table:ref_unstruc_square_simplexes_Q3}, we show the iteration counts under $h$-refinement while also increasing the number of levels (and the ratio $H/h$) for $\mathcal{P}^3$ elements. We also report in Table~\ref{table:ref_unstruc_square_simplexes_Q1_trilinos} the iteration counts for Trilinos AMG on the $\mathcal{P}^3$ discretization. The iteration counts of our preconditioner are competitive with those of Trilinos AMG as long as we choose a reasonable number of levels. In this case, increasing $p'$ does not seem to provide benefits. This is expected: we increased $m$ when we switched to $p'=2$, so the increased polynomial degree cannot fully balance the effect of the increase in agglomerate size. We emphasize this point: in our multilevel preconditioner, we must balance the size of the agglomerates with the polynomial degree of the coarse space. If we increase $p'$, we also need to provide an admissible R-tree hierarchy. It follows that we may need to increase the agglomerate size. Increasing $p'$ is not guaranteed to counterbalance the increase in $H$ needed to obtain an admissible R-tree; thus, the best preconditioner might use a lower $p'$.

\begin{table}[!htb]
\centering
\begin{subtable}{\textwidth}
\centering
\begin{tabular}{c c | c c c c | c c c c}
        \hline
        \multicolumn{2}{c|}{Point agglo}  & \multicolumn{4}{c|}{$\mathcal{Q}^1$ coarse elements} & \multicolumn{4}{c}{$\mathcal{Q}^2$ coarse elements} \\
        \hline
        Ref. & DoFs & lvls & $H/h$ & Iters & Cond. & lvls & $H/h$ & Iters & Cond. \\
        \hline
        2 & 68,305 & 2 & 2.60 & 8 & 2.10 & 2 & 3.55 & 6 & 1.51 \\
        3 & 272,257 & 3 & 3.60 & 8 & 2.01 & 3 & 9.20 & 8 & 1.94 \\
        4 & 1,087,105 & 4 & 6.94 & 10 & 2.70 & 4 & 23.05 & 10 & 2.52 \\
        5 & 4,344,577 & 5 & 15.05 & 14 & 4.35 & 5 & 90.38 & 19 & 6.67 \\
        \hline
    \end{tabular}
\end{subtable}\\[1em]
\begin{subtable}{\textwidth}
\centering
\begin{tabular}{c c | c c c c | c c c c}
        \hline
        \multicolumn{2}{c|}{Cell agglo}  & \multicolumn{4}{c|}{$\mathcal{Q}^1$ coarse elements} & \multicolumn{4}{c}{$\mathcal{Q}^2$ coarse elements} \\
        \hline
        Ref. & DoFs & lvls & $H/h$ & Iters & Cond. & lvls & $H/h$ & Iters & Cond. \\
        \hline
        2 & 68,305 & 2 & 4.49 & 13 & 3.95 & 2 & 8.47 & 12 & 4.87 \\
        3 & 272,257 & 3 & 8.08 & 14 & 4.11 & 3 & 21.62 & 15 & 6.18 \\
        4 & 1,087,105 & 4 & 15.03 & 17 & 5.30 & 4 & 53.49 & 16 & 5.06\\
        5 & 4,344,577 & 5 & 29.24 & 21 & 8.03 & 5 & 154.80 & 26 & 11.60 \\
        \hline
    \end{tabular}
\end{subtable}

\caption{Experimental results increasing refinements and levels with $\mathcal{P}^3$ elements on the unstructured square mesh (simplices). When the coarse space has degree $p' = 1$, we set $m_{cells} = 2$ and $m_{points} = 2$. When the coarse space has degree $p' = 2$, we set $m_{cells} = 4$ and $m_{points} = 4$.}
\label{table:ref_unstruc_square_simplexes_Q3}
\end{table}

\begin{table}[!htb]
    \centering
    \begin{subtable}{0.4\textwidth}
        \centering
    \begin{tabular}{cc}
\hline
\multicolumn{2}{c}{Trilinos AMG} \\
\hline
Ref. & Iters \\
\hline
2 & 9 \\
3 & 11 \\
4 & 11 \\
5 & 14 \\
6 & 15 \\
\hline
\end{tabular}
\end{subtable}
\begin{subtable}{0.4\textwidth}
\begin{tabular}{c c}
        \hline
        \multicolumn{2}{c}{Trilinos AMG} \\
        \hline
        Ref. & Iters \\
        \hline
		2 & 23 \\
        3 & 24 \\
        4 & 28 \\
        5 & 29 \\
        \hline
    \end{tabular}
\end{subtable}
    \caption{Trilinos AMG iterations while increasing refinements with $\mathcal{P}^1$ elements (first table) and $\mathcal{P}^3$ elements (second table) on the unstructured square mesh (simplices).}
    \label{table:ref_unstruc_square_simplexes_Q1_trilinos}
\end{table}

\subsection{Structured cube}
Here, we consider the case of a structured discretization of the unit cube $[0,1]^3$. In this instance, the starting mesh at zero refinements is the single cell unit cube. In Table~\ref{table:fixed_struc_cube_Q1}, we try to keep the ratio $H/h$ constant. In Table~\ref{table:ref_struc_cube_Q1}, instead, we show the iteration counts under $h$-refinement. In both cases, our algorithm behaves similarly to the structured square case. In Table~\ref{table:ref_struc_cube_Q1_trilinos}, we report the iteration counts for Trilinos AMG. Again, our algorithm is competitive with Trilinos AMG in terms of iteration counts.

\begin{table}[!htb]
\centering
\begin{subtable}{\textwidth}
\centering
    \begin{tabular}{c c c c | c c c c}
        \hline
        \multicolumn{4}{c|}{Point agglo}  & \multicolumn{2}{c}{$\mathcal{Q}^0$ coarse elements} & \multicolumn{2}{c}{$\mathcal{Q}^1$ coarse elements} \\
        \hline
        Ref. & DoFs & lvls & $H/h$ & Iters & Cond. & Iters & Cond. \\
        \hline
        4 & 4,913 & 2 & 4.83 & 4 & 1.27 & 4 & 1.04 \\
        5 & 35,937 & 2 & 5.10 & 7 & 2.35 & 4 & 1.05 \\
        6 & 274,625 & 2 & 4.97 & 11 & 3.52 & 4 & 1.05 \\
        7 & 2,146,689 & 2 & 6.06 & 14 & 4.18 & $\dagger$ & $\dagger$ \\
        \hline
    \end{tabular}
\end{subtable}
\\[1em]
\begin{subtable}{\textwidth}
\centering
    \begin{tabular}{c c c c | c c c c}
        \hline
        \multicolumn{4}{c|}{Cell agglo}  & \multicolumn{2}{c}{$\mathcal{Q}^0$ coarse elements} & \multicolumn{2}{c}{$\mathcal{Q}^1$ coarse elements} \\
        \hline
        Ref. & DoFs & lvls & $H/h$ & Iters & Cond. & Iters & Cond. \\
        \hline
        4 & 4,913 & 2 & 4.00 & 4 & 1.28 & 4 & 1.04 \\
        5 & 35,937 & 2 & 4.00 & 7 & 2.33 & 4 & 1.03 \\
        6 & 274,625 & 2 & 4.00 & 10 & 3.41 & 4 & 1.03 \\
        7 & 2,146,689 & 2 & 4.00 & 13 & 3.90 & $\dagger$ & $\dagger$ \\
        \hline
    \end{tabular}
\end{subtable}
\caption{Trying to keep $H/h$ fixed with $\mathcal{Q}^1$ elements on the structured cube mesh. $m_{cells} = 4$ and $m_{points} = 4$. The $\dagger$ symbol indicates the solver ran out of memory.}
\label{table:fixed_struc_cube_Q1}
\end{table}

\begin{table}[!htb]
\centering
\begin{subtable}{\textwidth}
\centering
    \begin{tabular}{c c c c | c c c c}
        \hline
        \multicolumn{4}{c|}{Point agglo}  & \multicolumn{2}{c}{$\mathcal{Q}^0$ coarse elements} & \multicolumn{2}{c}{$\mathcal{Q}^1$ coarse elements} \\
        \hline
        Ref. & DoFs & lvls & $H/h$ & Iters & Cond. & Iters & Cond. \\
        \hline
        4 & 4,913 & 3 & 8.70 & 4 & 1.27 & 4 & 1.04 \\
        5 & 35,937 & 4 & 17.34 & 7 & 2.38 & 5 & 1.14 \\
        6 & 274,625 & 5 & 31.85 & 11 & 4.52 & 7 & 1.40 \\
        7 & 2,146,689 & 6 & 63.76 & 17 & 8.88 & 10 & 2.18 \\
        \hline
    \end{tabular}
\end{subtable}
\\[1em]
\begin{subtable}{\textwidth}
\centering
    \begin{tabular}{c c c c | c c c c}
        \hline
        \multicolumn{4}{c|}{Cell agglo}  & \multicolumn{2}{c}{$\mathcal{Q}^0$ coarse elements} & \multicolumn{2}{c}{$\mathcal{Q}^1$ coarse elements} \\
        \hline
        Ref. & DoFs & lvls & $H/h$ & Iters & Cond. & Iters & Cond. \\
        \hline
        4 & 4,913 & 2 & 4.00 & 4 & 1.28 & 4 & 1.04 \\
        5 & 35,937 & 3 & 8.00 & 7 & 2.36 & 4 & 1.03 \\
        6 & 274,625 & 4 & 16.00 & 10 & 4.40 & 5 & 1.13 \\
        7 & 2,146,689 & 5 & 32.00 & 15 & 8.46 & 7 & 1.53 \\
        \hline
    \end{tabular}
\end{subtable}
\caption{Experimental results increasing refinements with $\mathcal{Q}^1$ elements on the structured cube mesh. $m_{cells} = 4$ and $m_{points} = 4$.}
\label{table:ref_struc_cube_Q1}
\end{table}

In Table~\ref{table:fixed_struc_cube_Q2} and~\ref{table:ref_struc_cube_Q2}, we use $\mathcal{Q}^2$ conforming elements to discretize the model problem. We use $p' = 1,2$ as coarse space degree. Again, when $p'=2$, we increase $m_{points}$. In the three dimensional case we could not test the two-level preconditioner since we would easily go out of memory on our machine. Nonetheless, we see that the iteration count is stable in all cases. Observations similar to those for the structured square mesh could be made. In Table~\ref{table:ref_struc_cube_Q1_trilinos} we show the iteration counts of Trilinos AMG: clearly our preconditioner outperforms Trilinos AMG in terms of iteration counts.

\begin{table}[!htb]
\centering
\begin{subtable}{\textwidth}
\centering
    \begin{tabular}{c c | c c c c | c c c c}
        \hline
        \multicolumn{2}{c|}{Point agglo}  & \multicolumn{4}{c|}{$\mathcal{Q}^1$ coarse elements} & \multicolumn{4}{c}{$\mathcal{Q}^2$ coarse elements} \\
        \hline
        Ref. & DoFs & lvls & $H/h$ & Iters & Cond. & lvls & $H/h$ & Iters & Cond. \\
        \hline
        4 & 35,937 & 3 & 5.10 & 6 & 1.25 & 3 & 8.19 & 5 & 1.21 \\
        5 & 274,625 & 3 & 4.73 & 5 & 1.13 & 3 & 9.09 & 6 & 1.36 \\
        6 & 2,146,689 & 3 & 4.73 & 6 & 1.43 & 3 & 10.34 & 6 & 1.52 \\
        \hline
    \end{tabular}
\end{subtable}\\[1em]
\begin{subtable}{\textwidth}
\centering
    \begin{tabular}{c c c c | c c c c}
        \hline
        \multicolumn{4}{c|}{Cell agglo}  & \multicolumn{2}{c}{$\mathcal{Q}^1$ coarse elements} & \multicolumn{2}{c}{$\mathcal{Q}^2$ coarse elements} \\
        \hline
        Ref. & DoFs & lvls & $H/h$ & Iters & Cond. & Iters & Cond. \\
        \hline
        4 & 35,937 & 3 & 8 & 7 & 1.39 & 4 & 1.04 \\
        5 & 274,625 & 3 & 8 & 7 & 1.55 & 4 & 1.05 \\
        6 & 2,146,689 & 3 & 8 & 7 & 1.60 & 4 & 1.05 \\
        \hline
    \end{tabular}
\end{subtable}

\caption{Experimental results trying to keep the ratio $H/h$ fixed between refinements with $\mathcal{Q}^2$ elements on the structured cube mesh. When the coarse space has degree $p' = 1$ we set $m_{cells} = 4$ and $m_{points} = 4$. When the coarse space has degree $p' = 2$ we set $m_{cells} = 4$ and $m_{points} = 7$.}
\label{table:fixed_struc_cube_Q2}
\end{table}

\begin{table}[!htb]
\centering
\begin{subtable}{\textwidth}
\centering
    \begin{tabular}{c c | c c c c | c c c c}
        \hline
        \multicolumn{2}{c|}{Point agglo}  & \multicolumn{4}{c|}{$\mathcal{Q}^1$ coarse elements} & \multicolumn{4}{c}{$\mathcal{Q}^2$ coarse elements} \\
        \hline
        Ref. & DoFs & lvls & $H/h$ & Iters & Cond. & lvls & $H/h$ & Iters & Cond. \\
        \hline
        4 & 35,937 & 4 & 8.67 & 6 & 1.32 & 2 & 4.09 & 4 & 1.10 \\
        5 & 274,625 & 5 & 15.92 & 8 & 1.70 & 3 & 9.09 & 6 & 1.36 \\
        6 & 2,146,689 & 6 & 31.83 & 12 & 2.73 & 4 & 21.40 & 8 & 1.78 \\
        \hline
    \end{tabular}
\end{subtable}\\[1em]
\begin{subtable}{\textwidth}
\centering
    \begin{tabular}{c c c c | c c c c}
        \hline
        \multicolumn{4}{c|}{Cell agglo}  & \multicolumn{2}{c}{$\mathcal{Q}^1$ coarse elements} & \multicolumn{2}{c}{$\mathcal{Q}^2$ coarse elements} \\
        \hline
        Ref. & DoFs & lvls & $H/h$ & Iters & Cond. & Iters & Cond. \\
        \hline
        4 & 35,937 & 2 & 4 & 5 & 1.33 & 4 & 1.04 \\
        5 & 274,625 & 3 & 8 & 7 & 1.55 & 4 & 1.05 \\
        6 & 2,146,689 & 4 & 16 & 9 & 2.08 & 5 & 1.07 \\
        \hline
    \end{tabular}
\end{subtable}

\caption{Experimental results increasing refinements with $\mathcal{Q}^2$ elements on the structured cube mesh. When the coarse space has degree $p' = 1$ we set $m_{cells} = 4$ and $m_{points} = 4$. When the coarse space has degree $p' = 2$ we set $m_{cells} = 4$ and $m_{points} = 7$.}
\label{table:ref_struc_cube_Q2}
\end{table}

\begin{table}[!htb]
    \centering
    \begin{subtable}{0.4\textwidth}
    \centering
    \begin{tabular}{cc}
\hline
\multicolumn{2}{c}{Trilinos AMG} \\
\hline
Ref. & Iters \\
\hline
4 & 12 \\
5 & 10 \\
6 & 12 \\
7 & 12 \\
\hline
\end{tabular}
    \end{subtable}
    \begin{subtable}{0.4\textwidth}
    \centering
\begin{tabular}{c c}
        \hline
        \multicolumn{2}{c}{Trilinos AMG} \\
        \hline
        Ref. & Iters \\
        \hline
        4 & 18 \\
        5 & 18 \\
        6 & 19 \\
        \hline
    \end{tabular}
    \end{subtable}
    \caption{Trilinos AMG iterations while increasing refinements with $\mathcal{Q}^1$ elements (first table) and $\mathcal{Q}^2$ elements (second table) on the structured cube mesh.}
    \label{table:ref_struc_cube_Q1_trilinos}
\end{table}

\subsection{Idealized left ventricle mesh}
To show the applicability of our algorithm to more general or realistic cases, we test our multilevel preconditioner on an idealized mesh of the heart's left ventricle. The mesh discretizes the space between two nested ellipsoids cut by a plane. The starting mesh at zero refinements is composed of 50,988 hexahedral cells. There is little difference between Table~\ref{table:fixed_id_lv_mesh_Q1} and Table~\ref{table:ref_id_lv_mesh_Q1} in terms of iteration counts or conditioning. This means that the increase in agglomerate size is not large enough to reduce the impact of the coarse-space correction. Despite this, our preconditioner is effective, and its iteration counts are competitive with those of Trilinos AMG in Table~\ref{table:ref_id_lv_mesh_Q1_trilinos}.

\begin{table}[!htb]
    \centering
    \begin{subtable}{\textwidth}
\centering
    \begin{tabular}{c c c c | c c c c}
        \hline
        \multicolumn{4}{c|}{Point agglo}  & \multicolumn{2}{c}{$\mathcal{Q}^0$ coarse elements} & \multicolumn{2}{c}{$\mathcal{Q}^1$ coarse elements} \\
        \hline
        Ref. & DoFs & lvls & $H/h$ & Iters & Cond. & Iters & Cond. \\
        \hline
        0 & 58,776 & 3 & 10.65 & 5 & 1.24 & 4 & 1.08 \\
        1 & 438,915 & 3 & 13.81 & 9 & 2.43 & 6 & 1.36 \\
        2 & 3,386,997 & 3 & 22.82 & 13 & 4.52 & 8 & 2.09 \\
        \hline
    \end{tabular}
\end{subtable}
\\[1em]
\begin{subtable}{\textwidth}
\centering
    \begin{tabular}{c c c c | c c c c}
        \hline
        \multicolumn{4}{c|}{Cell agglo}  & \multicolumn{2}{c}{$\mathcal{Q}^0$ coarse elements} & \multicolumn{2}{c}{$\mathcal{Q}^1$ coarse elements} \\
        \hline
        Ref. & DoFs & lvls & $H/h$ & Iters & Cond. & Iters & Cond. \\
        \hline
        0 & 58,776 & 3 & 11.77 & 5 & 1.24 & 4 & 1.10 \\
        1 & 438,915 & 3 & 14.62 & 9 & 2.52 & 6 & 1.44 \\
        2 & 3,386,997 & 3 & 17.90 & 14 & 4.79 & 7 & 1.73 \\
        \hline
    \end{tabular}
\end{subtable}
    \caption{Trying to keep $H/h$ fixed with $\mathcal{Q}^1$ elements on the idealized left ventricle mesh. $m_{cells} = 4$ and $m_{points} = 4$.}
    \label{table:fixed_id_lv_mesh_Q1}
\end{table}

\begin{table}[!htb]
    \centering
    \begin{subtable}{\textwidth}
\centering
    \begin{tabular}{c c c c | c c c c}
        \hline
        \multicolumn{4}{c|}{Point agglo}  & \multicolumn{2}{c}{$\mathcal{Q}^0$ coarse elements} & \multicolumn{2}{c}{$\mathcal{Q}^1$ coarse elements} \\
        \hline
        Ref. & DoFs & lvls & $H/h$ & Iters & Cond. & Iters & Cond. \\
        \hline
        0 & 58,776 & 3 & 10.65 & 5 & 1.24 & 4 & 1.08 \\
        1 & 438,915 & 4 & 22.63 & 9 & 2.43 & 6 & 1.38 \\
        2 & 3,386,997 & 5 & 46.72 & 13 & 4.53 & 9 & 2.15 \\
        \hline
    \end{tabular}
\end{subtable}
\\[1em]
\begin{subtable}{\textwidth}
\centering
    \begin{tabular}{c c c c | c c c c}
        \hline
        \multicolumn{4}{c|}{Cell agglo}  & \multicolumn{2}{c}{$\mathcal{Q}^0$ coarse elements} & \multicolumn{2}{c}{$\mathcal{Q}^1$ coarse elements} \\
        \hline
        Ref. & DoFs & lvls & $H/h$ & Iters & Cond. & Iters & Cond. \\
        \hline
        0 & 58,776 & 3 & 11.77 & 5 & 1.24 & 4 & 1.10 \\
        1 & 438,915 & 4 & 23.48 & 9 & 2.52 & 6 & 1.49 \\
        2 & 3,386,997 & 5 & 46.24 & 14 & 4.79 & 9 & 2.15 \\
        \hline
    \end{tabular}
\end{subtable}
    \caption{Experimental results increasing refinements with $\mathcal{Q}^1$ elements on the idealized left ventricle mesh. $m_{cells} = 4$ and $m_{points} = 4$.}
    \label{table:ref_id_lv_mesh_Q1}
\end{table}

In Table~\ref{table:fixed_id_lv_mesh_Q2} and Table~\ref{table:ref_id_lv_mesh_Q2}, we use $\mathcal{Q}^2$ conforming finite elements to discretize the model problem. We show different combinations of number of levels: it is not possible to refine the mesh more than once without running out of memory on our machine. In any case, our iteration counts outperform Trilinos AMG iterations reported in Table~\ref{table:ref_id_lv_mesh_Q1_trilinos}.

\begin{table}[!htb]
\centering
\begin{subtable}{\textwidth}
\centering
\begin{tabular}{c c | c c c c | c c c c}
        \hline
        \multicolumn{2}{c|}{Point agglo}  & \multicolumn{4}{c|}{$\mathcal{Q}^1$ coarse elements} & \multicolumn{4}{c}{$\mathcal{Q}^2$ coarse elements} \\
        \hline
        Ref. & DoFs & lvls & $H/h$ & Iters & Cond. & lvls & $H/h$ & Iters & Cond. \\
        \hline
        0 & 438,915 & 3 & 7.04 & 7 & 1.67 & 3 & 9.69 & 7 & 1.70  \\
        1 & 3,386,997 & 3 & 11.53 & 9 & 2.70 & 3 & 20.63 & 9 & 3.00  \\
        \hline
    \end{tabular}
\end{subtable}\\[1em]
\begin{subtable}{\textwidth}
\centering
\begin{tabular}{c c c c | c c c c}
        \hline
        \multicolumn{4}{c|}{Cell agglo}  & \multicolumn{2}{c}{$\mathcal{Q}^1$ coarse elements} & \multicolumn{2}{c}{$\mathcal{Q}^2$ coarse elements} \\
        \hline
        Ref. & DoFs & lvls & $H/h$ & Iters & Cond. & Iters & Cond. \\
        \hline
        0 & 438,915 & 3 & 11.77 & 10 & 2.53 & 7 & 1.81 \\
        1 & 3,386,997 & 3 & 14.62 & 12 & 3.83 &  9 & 2.45 \\
        \hline
    \end{tabular}
\end{subtable}

\caption{Experimental results trying to keep the ratio $H/h$ fixed between refinements with $\mathcal{Q}^2$ elements on the idealized left ventricle mesh. When the coarse space has degree $p' = 1$ we set $m_{cells} = 4$ and $m_{points} = 4$. When the coarse space has degree $p' = 2$ we set $m_{cells} = 4$ and $m_{points} = 7$.}
\label{table:fixed_id_lv_mesh_Q2}
\end{table}

\begin{table}[!htb]
\centering
\begin{subtable}{\textwidth}
\centering
\begin{tabular}{c c | c c c c | c c c c}
        \hline
        \multicolumn{2}{c|}{Point agglo}  & \multicolumn{4}{c|}{$\mathcal{Q}^1$ coarse elements} & \multicolumn{4}{c}{$\mathcal{Q}^2$ coarse elements} \\
        \hline
        Ref. & DoFs & lvls & $H/h$ & Iters & Cond. & lvls & $H/h$ & Iters & Cond. \\
        \hline
        0 & 438,915 & 5 & 20.92 & 7 & 1.72 & 3 & 9.69 & 7 & 1.70 \\
        1 & 3,386,997 & 6 & 41.03 & 10 & 2.88 & 4 & 28.90 & 10 & 3.11 \\
        \hline
    \end{tabular}
\end{subtable}\\[1em]
\begin{subtable}{\textwidth}
\centering
    \begin{tabular}{c c c c | c c c c}
        \hline
        \multicolumn{4}{c|}{Cell agglo}  & \multicolumn{2}{c}{$\mathcal{Q}^1$ coarse elements} & \multicolumn{2}{c}{$\mathcal{Q}^2$ coarse elements} \\
        \hline
        Ref. & DoFs & lvls & $H/h$ & Iters & Cond. & Iters & Cond. \\
        \hline
        0 & 438,915 & 3 & 11.77 & 10 & 2.53 & 7 & 1.81 \\
        1 & 3,386,997 & 4 & 23.48 & 12 & 3.99 & 9 & 2.75 \\
        \hline
    \end{tabular}
\end{subtable}
\caption{Experimental results increasing refinements with $\mathcal{Q}^2$ elements on the idealized left ventricle mesh. When the coarse space has degree $p' = 1$ we set $m_{cells} = 4$ and $m_{points} = 4$. When the coarse space has degree $p' = 2$ we set $m_{cells} = 4$ and $m_{points} = 7$.}
\label{table:ref_id_lv_mesh_Q2}
\end{table}

\begin{table}[!htb]
    \centering
    \begin{subtable}{0.4\textwidth}
    \centering
        \begin{tabular}{cc}
        \hline
        \multicolumn{2}{c}{Trilinos AMG} \\
        \hline
        Ref. & Iters \\
        \hline
        0 & 12 \\
        1 & 12 \\
        2 & 13 \\
        \hline
    \end{tabular}
 \end{subtable}
 \begin{subtable}{0.4\textwidth}
 \centering
 \begin{tabular}{c c}
        \hline
        \multicolumn{2}{c}{Trilinos AMG} \\
        \hline
        Ref. & Iters \\
        \hline
        0 & 15 \\
        1 & 15 \\
        \hline
    \end{tabular}
 \end{subtable}
    \caption{Trilinos AMG iterations while increasing refinements with $\mathcal{Q}^1$ elements (first table) and $\mathcal{Q}^2$ elements (second table) on the idealized left ventricle mesh.}
    \label{table:ref_id_lv_mesh_Q1_trilinos}
\end{table}

\subsection{Realistic left ventricle mesh}
In this part, we consider a realistic mesh of the left ventricle of the heart to test the multilevel preconditioner on a mesh of practical interest. We stress that, in a realistic pipeline, the starting mesh is already given and fine enough. The mesh at zero refinements is composed of 374,022 hexahedral cells. From Table~\ref{table:fixed_real_lv_mesh_Q1}, we see that the performance is similar for both agglomeration strategies. We remark that, due to the mesh properties, it was not possible to refine the mesh more than once and run our algorithm with the sequential implementation we have in place. From Table~\ref{table:ref_real_lv_mesh_Q1_trilinos} we can see that even for a realistic mesh our algorithm is competitive with Trilinos AMG in terms of iteration counts.

\begin{table}[!htb]
    \centering
    \begin{subtable}{\textwidth}
\centering
    \begin{tabular}{c c c c | c c c c}
        \hline
        \multicolumn{4}{c|}{Point agglo}  & \multicolumn{2}{c}{$\mathcal{Q}^0$ coarse elements} & \multicolumn{2}{c}{$\mathcal{Q}^1$ coarse elements} \\
        \hline
        Ref. & DoFs & lvls & $H/h$ & Iters & Cond. & Iters & Cond. \\
        \hline
        0 & 409,752 & 4 & 19.46 & 10 & 2.85 & 7 & 1.62 \\
        1 & 3,134,785 & 4 & 23.74 & 14 & 5.19 & 9 & 2.34 \\
        1 & 3,134,785 & 5 & 35.05 & 14 & 5.19 & 9 & 2.48 \\
        \hline
    \end{tabular}
\end{subtable}
\\[1em]
\begin{subtable}{\textwidth}
\centering
    \begin{tabular}{c c c c | c c c c}
        \hline
        \multicolumn{4}{c|}{Cell agglo}  & \multicolumn{2}{c}{$\mathcal{Q}^0$ coarse elements} & \multicolumn{2}{c}{$\mathcal{Q}^1$ coarse elements} \\
        \hline
        Ref. & DoFs & lvls & $H/h$ & Iters & Cond. & Iters & Cond. \\
        \hline
        0 & 409,752 & 4 & 20.78 & 11 & 3.02 & 7 & 1.72 \\
        1 & 3,134,785 & 4 & 23.89 & 15 & 5.49 & 9 & 2.43 \\
        1 & 3,134,785 & 5 & 36.09 & 15 & 5.49 & 10 & 2.65 \\
        \hline
    \end{tabular}
\end{subtable}
    \caption{Experimental results with $\mathcal{Q}^1$ elements on the realistic left ventricle mesh. $m_{cells} = 4$ and $m_{points} = 4$.}
    \label{table:fixed_real_lv_mesh_Q1}
\end{table}

In Table~\ref{table:fixed_real_lv_mesh_Q2}, we use $\mathcal{Q}^2$ conforming elements to discretize the model problem. Since we increase $p$, we cannot do any $h$-refinement on this mesh or we would run out of memory on our machine. Regardless, we show that our preconditioner works on a non-standard mesh with finite element order greater than one. From Table~\ref{table:ref_real_lv_mesh_Q1_trilinos} we can also see that the iteration counts of PCG preconditioned with our multilevel preconditioner outperform PCG preconditioned with Trilinos AMG.

\begin{table}[!htb]
\centering
\begin{subtable}{\textwidth}
\centering
    \begin{tabular}{c c | c c c c | c c c c}
        \hline
        \multicolumn{2}{c|}{Point agglo}  & \multicolumn{4}{c|}{$\mathcal{Q}^1$ coarse elements} & \multicolumn{4}{c}{$\mathcal{Q}^2$ coarse elements} \\
        \hline
        Ref. & DoFs & lvls & $H/h$ & Iters & Cond. & lvls & $H/h$ & Iters & Cond. \\
        \hline
        0 & 3,134,785 & 4 & 13.52 & 10 & 3.07 & 3 & 24.41 & 10 & 3.44\\
        \hline
    \end{tabular}
\end{subtable}
\\[1em]
\begin{subtable}{\textwidth}
\centering
    \begin{tabular}{c c | c c c c | c c c c}
        \hline
        \multicolumn{2}{c|}{Cell agglo}  & \multicolumn{4}{c|}{$\mathcal{Q}^1$ coarse elements} & \multicolumn{4}{c}{$\mathcal{Q}^2$ coarse elements} \\
        \hline
        Ref. & DoFs & lvls & $H/h$ & Iters & Cond. & lvls & $H/h$ & Iters & Cond. \\
        \hline
        0 & 3,134,785 & 3 & 13.68 & 13 & 4.92 & 4 & 20.78 & 11 & 3.83 \\
        \hline
    \end{tabular}
\end{subtable}
\caption{Experimental results with $\mathcal{Q}^2$ elements on the realistic left ventricle mesh. When the coarse space has degree $p' = 1$ we set $m_{cells} = 4$ and $m_{points} = 4$. When the coarse space has degree $p' = 2$ we set $m_{cells} = 4$ and $m_{points} = 7$.}
\label{table:fixed_real_lv_mesh_Q2}
\end{table}

\begin{table}[!htb]
    \centering
    \begin{subtable}{0.4\textwidth}
    \centering
    \begin{tabular}{cc}
        \hline
        \multicolumn{2}{c}{Trilinos AMG} \\
        \hline
        Ref. & Iters \\
        \hline
        0 & 10 \\
        1 & 13 \\
        \hline
    \end{tabular}
    \end{subtable}
    \begin{subtable}{0.4\textwidth}
    \centering
        \begin{tabular}{cc}
        \hline
        \multicolumn{2}{c}{Trilinos AMG} \\
        \hline
        Ref. & Iters \\
        \hline
        0 & 16 \\
        \hline
    \end{tabular}
    \end{subtable}
    \caption{Trilinos AMG iterations while increasing refinements with $\mathcal{Q}^1$ elements (first table) and $\mathcal{Q}^2$ elements (second table) on the realistic left ventricle mesh.}
    \label{table:ref_real_lv_mesh_Q1_trilinos}
\end{table}

\subsection{Liver mesh (simplices)}
Finally, we consider a realistic simplicial mesh representing a human liver. We want to verify that our preconditioner works on a nontrivial simplicial mesh of practical interest. At zero refinements, the mesh is composed of 284,201 simplicial cells. In Table~\ref{table:fixed_liver_mesh_Q1}, we report results with $p=1$. Even though we keep the number of levels fixed, the ratio $H/h$ and the iteration counts increase. Nevertheless, our iteration counts are comparable with those of Trilinos AMG in Table~\ref{table:ref_liver_mesh_Q1_trilinos}.

\begin{table}[!htb]
    \centering
    \begin{subtable}{\textwidth}
\centering
    \begin{tabular}{c c c c | c c c c}
        \hline
        \multicolumn{4}{c|}{Point agglo}  & \multicolumn{2}{c}{$\mathcal{Q}^0$ coarse elements} & \multicolumn{2}{c}{$\mathcal{Q}^1$ coarse elements} \\
        \hline
        Ref. & DoFs & lvls & $H/h$ & Iters & Cond. & Iters & Cond. \\
        \hline
       	0 & 51,339 & 3 & 17.06 & 13 & 4.74 & 7 & 1.60 \\
        1 & 397,238 & 3 & 20.04 & 18 & 8.41 & 9 & 2.45 \\
        2 & 3,109,535 & 3 & 30.58 & 22 & 11.60 & 11 & 3.69 \\
        \hline
    \end{tabular}
\end{subtable}
\\[1em]
\begin{subtable}{\textwidth}
\centering
    \begin{tabular}{c c c c | c c c c}
        \hline
        \multicolumn{4}{c|}{Cell agglo}  & \multicolumn{2}{c}{$\mathcal{Q}^0$ coarse elements} & \multicolumn{2}{c}{$\mathcal{Q}^1$ coarse elements} \\
        \hline
        Ref. & DoFs & lvls & $H/h$ & Iters & Cond. & Iters & Cond. \\
        \hline
        0 & 51,339 & 3 & 14.31 & 11 & 3.78 & 6 & 1.41\\
        1 & 397,238 & 3 & 21.71 & 14 & 5.07 & 7 & 1.88 \\
        2 & 3,109,535 & 3 & 38.35 & 18 & 7.70 & $\dagger$ & $\dagger$  \\
        \hline
    \end{tabular}
\end{subtable}
    \caption{Experimental results with $\mathcal{P}^1$ elements on the liver mesh (simplices). $m_{cells} = 4$ and $m_{points} = 4$. The $\dagger$ symbol means that the solver ran out of memory.}
    \label{table:fixed_liver_mesh_Q1}
\end{table}
    
    In Table~\ref{table:fixed_liver_mesh_Q2} we consider the case $p=2$. Due to memory issues, we could not refine the mesh more than once. Nonetheless, we can see that our iteration counts are competitive with the Trilinos AMG implementation. When $p'=2$, we do not get an improvement in terms of iteration counts because we also increase $m_{cells}$ and $m_{points}$ and obtain a larger $H/h$ ratio. The increase in the ratio is not fully compensated by increasing $p'$. 
    
    \begin{table}[!htb]
    \centering
    \begin{subtable}{\textwidth}
\centering        
        \begin{tabular}{c c | c c c c | c c c c}
        \hline
        \multicolumn{2}{c|}{Point agglo}  & \multicolumn{4}{c|}{$\mathcal{Q}^1$ coarse elements} & \multicolumn{4}{c}{$\mathcal{Q}^2$ coarse elements} \\
        \hline
        Ref. & DoFs & lvls & $H/h$ & Iters & Cond. & lvls & $H/h$ & Iters & Cond. \\
        \hline
        0 & 397,238 & 3 & 10.70 & 10 & 2.91 & 3 & 17.11 & 10 & 3.10 \\
        1 & 3,109,535 & 3 & 15.29 & 12 & 4.44 & 3 & 23.98 & 12 & 4.56 \\
        \hline
\end{tabular}
 \end{subtable}
\\[1em]
\begin{subtable}{\textwidth}      
\centering 
\begin{tabular}{c c | c c c c | c c c c}
        \hline
        \multicolumn{2}{c|}{Cell agglo}  & \multicolumn{4}{c|}{$\mathcal{Q}^1$ coarse elements} & \multicolumn{4}{c}{$\mathcal{Q}^2$ coarse elements} \\
        \hline
        Ref. & DoFs & lvls & $H/h$ & Iters & Cond. & lvls & $H/h$ & Iters & Cond. \\
        \hline
        0 & 397,238 & 3 & 14.31 & 11 & 3.54 & 3 & 18.93 & 10 & 3.21\\
        1 & 3,109,535 & 3 & 21.71 & 14 & 5.35 & 3 & 28.72 & 12 & 4.29 \\
        \hline
    \end{tabular}

\end{subtable}
    \caption{Experimental results with $\mathcal{P}^2$ elements on the liver mesh (simplices). When $p' = 1$ we set $m_{cells} = 4$ and $m_{points} = 4$. When $p' = 2$ we set $m_{cells} = 6$ and $m_{points} = 6$.}
    \label{table:fixed_liver_mesh_Q2}
\end{table}

\begin{table}[!htb]
    \centering
    \begin{subtable}{0.4\textwidth}
    \centering
    \begin{tabular}{cc}
        \hline
        \multicolumn{2}{c}{Trilinos AMG} \\
        \hline
        Ref. & Iters \\
        \hline
		0 & 9 \\
        1 & 10 \\
        2 & 10 \\
        \hline
    \end{tabular}
    \end{subtable}
    \begin{subtable}{0.4\textwidth}
    \centering
        \begin{tabular}{cc}
        \hline
        \multicolumn{2}{c}{Trilinos AMG} \\
        \hline
        Ref. & Iters \\
        \hline
		0 & 13 \\
        1 & 14 \\
        \hline
    \end{tabular}
    \end{subtable}
    \caption{Trilinos AMG iterations while increasing refinements with $\mathcal{P}^1$ elements (first table) and $\mathcal{P}^2$ elements (second table) on the liver mesh (simplices).}
    \label{table:ref_liver_mesh_Q1_trilinos}
\end{table}

\section{Conclusions and outlook}\label{sec:conc}
\noindent 
In this paper, we introduced a novel multilevel preconditioner for high-order conforming finite element discretizations. While the preconditioner at its core is algebraic, and no prescribed hierarchy is needed, we also exploit geometrical information from the mesh in order to build enriched coarse spaces. The augmented coarse space retains the polynomial information needed to deal with high-order discretizations. We also presented the convergence analysis of the two-level preconditioner. The analysis shows the benefits of increasing the polynomial degree of the coarse space. Even though the estimates we proved are not sharp, we showed in the numerical experiments that our preconditioner performs better than theory suggests and is competitive, in terms of iteration counts, with the Trilinos implementation of AMG. The loose bounds we obtained guarantee uniform convergence if the ratio $H/h$ and polynomial degree $p$ are fixed but are not sharp enough to give clear indications on the performance of the preconditioner.
Even though some parameter tuning is needed for our multilevel preconditioner (the choice of $m$ in the R-tree agglomeration is crucial for the behavior of the preconditioner), we proved that our preconditioner performs effectively on high-order discretizations. We believe that the multilevel preconditioner we presented in this work represents a viable alternative for preconditioning the linear systems arising from high-order finite element discretizations. 

\section*{Data availability}
\noindent The source code implementing the methodology and the experiments showcased in this paper are publicly available at the GitHub page of \textsc{polyDEAL}~\cite{polydeal}. Detailed instructions for compilation and execution are provided in the repository.

\section*{Acknowledgments}
\noindent MF and LH acknowledge support of the European Research Council (ERC) under the European Union's Horizon 2020 research and innovation programme (call HORIZON-EUROHPC-JU-2023-COE-03, grant agreement No. 101172493 ``dealii-X'').  LH acknowledges partial support by King Abdullah University of Science and Technology Research Funding (KRF) under Award No. ORFS-2025-CRG13-6911.3. The authors are members of Gruppo Nazionale per il Calcolo Scientifico (GNCS), Istituto Nazionale di Alta Matematica (INdAM).

\section*{Use of AI tools in the writing process}
\noindent During the preparation of this work, the authors used ChatGPT v5.6 in order to improve readability and language. The authors reviewed and edited the content as needed and take full responsibility for the content of the published article.

\bibliographystyle{abbrv}
\bibliography{refs}
\end{document}